\newcommand{\R}{\ensuremath{\mathbb{R}}}
\newcommand{\Z}{\ensuremath{\mathbb{Z}}}
\newcommand{\C}{\ensuremath{\mathbb{C}}}
\newcommand{\Q}{\ensuremath{\mathbb{Q}}}
\newcommand{\N}{\ensuremath{\mathbb{N}}}
\newcommand{\F}{\ensuremath{\mathbb{F}}}
\newcommand{\Proj}{\ensuremath{\mathbb{P}}}
\newcommand{\lra}{\ensuremath{\longrightarrow}}
\newcommand{\J}{\ensuremath{\mathcal{J}}}
\newcommand{\calo}{\ensuremath{\mathcal{O}}}
\newcommand{\A}{\ensuremath{\mathbb{A}}} %affiner Raum
\newcommand{\x}{\ensuremath{\mathbf{x}}}
\DeclareMathOperator{\Spec}{Spec}
\DeclareMathOperator{\ord}{ord}
\DeclareMathOperator{\Jac}{Jac}
\DeclareMathOperator{\Div}{div}
\DeclareMathOperator{\Hom}{Hom}
\DeclareMathOperator{\End}{End}
\newcommand{\mc}[1]{\ensuremath{\mathcal{#1}}}   %mathcal
\newcommand{\mf}[1]{\ensuremath{\mathfrak{#1}}}   %mathfrak
\newtheorem{Thm}{Theorem}[section]
\newtheorem{Cor}[Thm]{Corollary}
\newtheorem{Lem}[Thm]{Lemma}
\newtheorem{Prop}[Thm]{Proposition}
\theoremstyle{definition}
       \newtheorem{defi}[Thm]{Definition}
	\newtheorem{Exercise}[Thm]{Exercise}
       \newtheorem{fact}[Thm]{Fact}	
       \newtheorem{Rmk}[Thm]{Remark}
       \newtheorem{ex}[Thm]{Example}
\numberwithin{equation}{Thm}
\title[Measuring Singularities with Frobenius]{Measuring Singularities with Frobenius: The Basics}
\author{Ang\'elica Benito, Eleonore Faber, Karen E. Smith}
\thanks{2000 {\em Mathematics subject classification}. Primary 13A35; Secondary 14B05.}
\thanks{Keywords: Log canonical thresholds, multiplier ideals, F-thresholds, test ideals.}
 \thanks{The first author is partially supported by MTM2009-07291, by the Fellowship Fundación Ramón Areces para Estudios Postdoctorales 2011/12, and by Spanish Government in the frame of Postdoctoral mobility abroad Fellowship (code: EX-2010-0128).
The second author has been supported by a For Women in Science award 2011 of L'Or{\'e}al Austria, the Austrian commission for UNESCO and the Austrian Academy of Sciences and by
the Austrian Science Fund (FWF)
in frame of project P21461. The third author is partially supported by NSF grant DMS-1001764.}
\address[Ang\' elica Benito]{Department of Mathematics
 University of Michigan\\
 Ann Arbor, MI 48109}
 \email{abenitos@umich.edu}
\address[Eleonore Faber]{Fakult\"at f\"ur Mathematik\\
Universit\"at Wien, Austria}
\email[Eleonore Faber]{eleonore.faber@univie.ac.at}
\address[Karen E. Smith]{Department of Mathematics
 University of Michigan\\
 Ann Arbor, MI 48109}
\email{kesmith@umich.edu}
\begin{document}

\maketitle

\section{Introduction}
Consider a polynomial $f$ over some field $k$, vanishing at some point $x$ in $k^n$.  By definition, $f$ is smooth at $x$ (or the hypersurface defined by $f$ is smooth at $x$)   if and only if some partial derivative $\frac{\partial f}{\partial x_i}$ is non-zero there. Otherwise, $f$  is singular at $x$. But how singular?  Can we quantify the singularity of $f$ at $x$?

The  multiplicity  is perhaps the most naive measurement of singularities. Because $f$ is singular at $x$ if all the first order partial derivatives of $f$ vanish there, it is natural to say that $f$ is even more singular if also all the second order partials vanish, and so forth.  The {\it order,\/} or {\it multiplicity,\/}  of the singularity  at $x$ is the largest $d$ such that for all differential operators  $\partial$ of order  less than $ d$, $\partial f$ vanishes at $x$. Choosing coordinates so that $x$ is the origin,  it is easy to see that the multiplicity is simply the degree of the lowest degree term of $f$.

The multiplicity is an important first step in measuring singularities, but it is too crude to give a good measurement of singularities. For example, the polynomials $xy$ and $y^2 - x^3$ both define  singularities of multiplicity two, though the former is clearly less singular than the latter. 
%\begin{center}
%\setlength{\unitlength}{0.8mm}%%%% Para decidir unidades!
%\begin{picture}(100,50)
%\allinethickness{1.1pt}
%
%\put(20,27){\spline(-18,0)(18,0)}
%\put(20,27){\spline(0,-18)(0,18)}
%\put(11,13){\makebox(0,0){\footnotesize$\displaystyle yx=0$}}
%
%\put(76,27){\spline(0,0)(1,0.5)(2.25,1.6875)(4,4)(6.25,7.8125)(9,13.5)(10.89,17.9685)}
%\put(76,27){\spline(0,0)(1,-0.5)(2.25,-1.6875)(4,-4)(6.25,-7.8125)(9,-13.5)(10.89,-17.9685)}
%
%\put(75,13){\makebox(0,0){\footnotesize$\displaystyle y^2=x^3$}}
%\end{picture}
%\end{center}
Indeed, $xy$ defines a simple normal crossing divisor, whereas the singularity of the cuspidal curve defined by  $y^2 - x^3$ is quite complicated, and that of, for example $y^2 - x^{17}$ is even more so.  

\begin{center}
\setlength{\unitlength}{0.8mm}%%%% Para decidir unidades!
\begin{picture}(147,50)
\allinethickness{1.1pt}

\put(20,27){\spline(-18,0)(18,0)}
\put(20,27){\spline(0,-18)(0,18)}
\put(11,13){\makebox(0,0){\footnotesize$\displaystyle yx=0$}}

\put(70,27){\spline(0,0)(1,0.5)(2.25,1.6875)(4,4)(6.25,7.8125)(9,13.5)(10.89,17.9685)}
\put(70,27){\spline(0,0)(1,-0.5)(2.25,-1.6875)(4,-4)(6.25,-7.8125)(9,-13.5)(10.89,-17.9685)}

\put(69,13){\makebox(0,0){\footnotesize$\displaystyle y^2=x^3$}}

\put(126,27){\spline(0,0)(1,0.01)(2.25,0.170859375)(4,1.28)(6.25,6.103515625)(7.29,10.4603532)(8.41,17.24987631)(8.7025,19.44258973)(9,21.87)}
\put(126,27){\spline(0,0)(1,-0.01)(2.25,-0.170859375)(4,-1.28)(6.25,-6.103515625)(7.29,-10.4603532)(8.41,-17.24987631)(8.7025,-19.44258973)(9,-21.87)}
\put(124,13){\makebox(0,0){\footnotesize$\displaystyle y^2=x^{17}$}}
\end{picture}

\vspace{-.3cm}

Figure 1. Curves with multiplicity $2$ at the origin.
\end{center}

\vspace{0.3cm}

%\begin{center}
%\setlength{\unitlength}{0.8mm}%%%% Para decidir unidades!
%\begin{picture}(100,50)
%\allinethickness{1.1pt}
%
%\put(20,27){\spline(0,0)(1,0.5)(2.25,1.6875)(4,4)(6.25,7.8125)(9,13.5)(10.89,17.9685)}
%\put(20,27){\spline(0,0)(1,-0.5)(2.25,-1.6875)(4,-4)(6.25,-7.8125)(9,-13.5)(10.89,-17.9685)}
%\put(19,13){\makebox(0,0){\footnotesize$\displaystyle y^2=x^3$}}
%
%\put(76,27){\spline(0,0)(1,0.01)(2.25,0.170859375)(4,1.28)(6.25,6.103515625)(7.29,10.4603532)(8.41,17.24987631)(8.7025,19.44258973)(9,21.87)}
%\put(76,27){\spline(0,0)(1,-0.01)(2.25,-0.170859375)(4,-1.28)(6.25,-6.103515625)(7.29,-10.4603532)(8.41,-17.24987631)(8.7025,-19.44258973)(9,-21.87)}
%\put(74,13){\makebox(0,0){\footnotesize$\displaystyle y^2=x^{17}$}}
%
%\end{picture}
%\end{center}

This  paper describes the first steps toward understanding  a much more subtle measure of singularities 
 which arises naturally in three different contexts--- analytic, algebro-geometric, and finally, algebraic.  
Miraculously,  all three approaches  lead to essentially the same measurement of singularities:  the log canonical threshold (in characteristic zero) and the closely related $F$-pure threshold (in  characteristic $p$).  The log canonical threshold, or   complex singularity exponent,  can be defined analytically (via integration) or algebro-geometrically (via resolution of singularities). As such, it is defined only for polynomials over $\mathbb C$ or other characteristic zero fields. The $F$-pure threshold, whose name we shorten to F-threshold here, by contrast, is defined only in prime characteristic. Its definition makes use of the Frobenius, or $p$-th power map. Remarkably, these two completely different ways of quantifying singularities turn out to be intimately related. As we will describe, if we fix a polynomial with integer coefficients, the $F$-threshold of its ``reduction mod $p$" approaches its log canonical threshold as $p$ goes to infinity.

Both the log canonical threshold and the $F$-threshold  can be interpreted as critical numbers for the behavior of certain associated ideals, called the multiplier ideals in the characteristic zero  setting, and the test ideals in the characteristic $p$ world. Both  naturally give rise to higher order analogs, called ``jumping numbers." We will also introduce these refinements.

We present only the first steps in understanding these invariants, with an emphasis on the prime characteristic setting. Attempting only to demystify the concepts in the simplest cases, 
we make no effort to discuss the most general case, or to describe the many interesting connections with deep ideas 
in analysis, topology, algebraic geometry, number theory, and commutative algebra. The reader who is inspired to dig deeper will find plenty of more sophisticated  survey articles
and  a plethora of connections to many ideas throughout mathematics, including  the Bernstein-Sato polynomial \cite{Ko}, Varchenko's work on mixed  Hodge structures on the vanishing cycle \cite{Var}, the Hodge spectrum \cite{St},  the Igusa Zeta Function \cite{Ig}, motivic integration and jet schemes \cite{Mu},  Lelong numbers \cite{De3},  Tian's invariant for studying K\"ahler-Einstein metrics \cite{De2},  various vanishing theorems for cohomology \cite[Chapter 9]{Laz}, birational rigidity \cite{dEM}, Shokurov's approach to termination of flips \cite{Shok},  Hasse invariants \cite{BS},  the monodromy action on the Milnor fiber, Frobenius splitting and tight closure. 
  
 There are several surveys which are both more  sophisticated and pay more attention to history. 
 In particular,  the classic survey by Koll\'ar in \cite[Sections 8--10]{Ko} contains a deeper discussion of the characteristic zero theory, as do the more recent lectures of Budur \cite{Bu2}, mainly from the algebro-geometrical perspective.  For a more analytic discussion, papers of  Demailly are worth looking at, such as the article \cite{De1}.  Likewise, for the full characteristic $p$ story, Schwede and Tucker's survey of test ideals \cite{ST} is very nice.  The survey  \cite{Mu2} contains a modern account of  both the characteristic $p$ and characteristic zero theory. 

{\it Acknowledgements: } The authors are grateful to Daniel Hernandez, Jeffrey Lagarias, Kai Rajala and Kevin Tucker for many useful comments. In addition, two referees made excellent suggestions and spotted many misstatements. These notes began as a lecture notes from a  mini-course by the third author at  a workshop at Luminy entitled ``Multiplier Ideals in Commutative Algebra and Singularity 
Theory'' organized by Michel Granger, Daniel Naie, Anne Pichon in January 2011; the authors are grateful to the organizers of that conference. They were then polished at the workshop ``Computational F-singularities" organized by Karl Schwede, Kevin Tucker and Wenliang Zhang in Ann Arbor Michigan in May 2012.

 \section{Characteristic zero: Log canonical threshold and Multiplier ideals}

%\noindent\fbox{\begin{tabular}{l}
%{\bf Goal:} An elementary introduction to some characteristic $p > 0$ analogues of multiplier\\
%
% ideals,
% log canonical thresholds
% and other concepts in birational complex higher \\
% 
% dimensional algebraic geometry.\end{tabular}} 
% 
% \

In this section we work with polynomials over the complex numbers $\C$. Let $ \C^N \overset{f}\longrightarrow \C$ be a polynomial (or analytic)  function, vanishing at a point $x$. 

\subsection{Analytic approach}
Approaching singularities from an analytic  point of view, we  consider how fast the (almost everywhere defined) function 
$$\xymatrix@R=0pc@C=2pc{
 \mathbb C^N = \mathbb R^{2N} \ar[r] & \mathbb R\\
 \ \ \ \ z \ar@{|->}[r] &  \frac{1}{|f(z)|}
 }$$
  ``blows-up" at a point $x$ in the zero set of $f$. We attempt to measure this singularity via integration. For example, is this function square integrable in a neighborhood of $x$? 
The integral 
$$ \int \frac{1}{|f|^{2}} $$
never converges  in any small ball around $x$, but we can dampen the rate at which $\frac{1}{|f|}$ blows up by raising to a small positive  power $\lambda$. Indeed,  for sufficiently small positive  real numbers $\lambda$, depending on $f$, 
the integral
 $$ \int_{B_{\varepsilon}(x)}\frac{1}{|f|^{2\lambda}}$$ is finite,   where 
 $B_{\varepsilon}(x)$ denotes a ball of sufficiently small radius  around $x$.
 As we vary the parameter $\lambda$ from very small positive values to larger ones, there is a critical value at which 
the function  $\frac{1}{|f|^{\lambda}}$ suddenly fails to be $L^2$ locally at  $x$. This is the log canonical threshold or complex singularity exponent  of $f$ at $x$.  That is,
\begin{defi} 
The \emph{complex singularity exponent} of $f$ (at $x$) is defined as
$$ lct_{x}(f):=\sup \left\{ \lambda  \in \R_{+} \ \big|  \  {\hbox{ there exists a neighborhood $B$ of $x$ such that }}\int_{B} \frac{1}{|f|^{2\lambda}} < \infty \right\}.$$
 When the point $x$ is understood,  we denote the complex singularity exponent simply by $lct(f)$.   \end{defi} 

\vspace{0.1cm}

The following figure depicts the $\lambda$-axis: for small values of $\lambda$ the function $ z\mapsto  \frac{1}{|f(z)|}$ belongs to $L^2$ locally in a neighborhood of a point $x$; for larger $\lambda$ is does not. It is  not clear whether or not the function is integrable {\it at\/}  the complex singularity exponent; we will see shortly that it is. 

\setlength{\unitlength}{0.9mm}%%%% Para decidir unidades!
\begin{center}
\begin{picture}(80,10)
%\allinethickness{1pt}
\put(0,5){\vector(1,0){80}}
\put(0,5){\makebox(0,0){$[$}}
\put(33,5){\makebox(0,0){$)$}}
\put(33.7,5){\makebox(0,0){$[$}}
\put(17,8){\makebox(0,0){\small$\frac{1}{|f|^{2\lambda}}$ integrable}}
\put(57,8){\makebox(0,0){ \small $\frac{1}{|f|^{2\lambda}}$ not integrable}}
\put(90,5){\makebox(0,0){{\small $\lambda$-axis}}}
\put(34,0){\makebox(0,0){{\small $lct(f)$}}}
\end{picture}
\end{center}

\vspace{0.1cm}

This numerical invariant  is more commonly known as the {\it log canonical threshold,\/} but it is natural to use the original analytic name when approaching it from the analytic point of view. See Remark \ref{name}.

\begin{ex}\label{ex1}  If  $f$ is smooth at $x$,  then its complex singularity exponent at $x$ is 1.  Indeed, in this case the polynomial $f$  can be taken to be part of a system of local coordinates for $\mathbb C^n$ at $x$. It is then easy to compute that the integral
 $$ \int_{B(x)}\frac{1}{|f|^{2\lambda}}$$ 
 always converges on any bounded ball $B(x)$ for any positive $\lambda < 1$. Indeed, this computation is a special case of Example \ref{ex2}, below.
\end{ex}

\begin{ex}\label{ex2} 
Let $f=z_1^{a_1} \cdots z_N^{a_N}$ be a monomial in $\C[z_1, \ldots, z_N]$, which defines a singularity at the origin in $\mathbb C^N$. Let us compute its complex singularity exponent. 
By definition, we need to integrate
$$   \frac{1}{|z_1^{a_1} \cdots z_N^{a_N}|^{2\lambda}}$$
over a ball around the origin. To do so, we 
 use polar coordinates. We have each $|z_i| = r_i$ and each $dz_i \wedge d\bar z_i = r_i d r_i \wedge d\vartheta_i$.  Hence we see that 
 $$
 \int \frac{1}{|z_1|^{2a_1\lambda} \cdots |z_N|^{2a_N \lambda}}
 $$ converges in a neighborhood $B$ of the origin if and only if 
 $$
 \int_B \frac{r_1 \cdots r_N}{r_1^{2a_1 \lambda} \cdots r_N^{2a_N\lambda}} =
 \int_B \frac{1}{r_1^{2a_1 \lambda-1} \cdots r_N^{2a_N\lambda-1}}
 $$
 converges.
By Fubini's theorem, this integral converges if and only if $1-2a_i \lambda >-1$ for all $i$, that is, $\lambda < \frac{1}{a_i}$ for all $i$. Thus 
$$lct(z_1^{a_1} \cdots z_N^{a_N})=\min_i\Big\{ \frac{1}{a_i}\Big \}.$$
\end{ex}

If $f$ has ``worse" singularities, the function $\frac{1}{|f|}$ will blow up faster and the complex singularity exponent will typically be smaller. In particular, the complex singularity exponent is always less than or equal to the complex singularity exponent  of a smooth point, or one.{\footnote{One caveat: the singularity exponent  behaves somewhat differently over $\mathbb R$, due to the possibility that a polynomial's zeros are hidden over $\mathbb R$, so that $\frac{1}{|f|}$ may fail to blowup as expected or even at all!}}   Although it is  not obvious, the complex singularity exponent is always a positive rational number. We  prove this in the next subsection using Hironaka's theorem on resolution of singularities.

\subsection{\bf{Computing complex singularity exponent by monomializing.} }\label{exa3} Hironaka's beautiful theorem on resolution of singularities allows us to reduce the computation of the integral in the definition of the complex singularity exponent for any polynomial (or analytic) function to the monomial case.  Let us recall Hironaka's theorem (cf. \cite{H64}).

\begin{Thm}\label{hironaka}
Every polynomial (or analytic) function on  $\mathbb C^N$ has a monomialization. That is, 
there exists a proper birational morphism $X \overset{\pi}{\longrightarrow} \C^N$ from a smooth variety $X$  such that  both $$f \circ \pi \,\,\,\,\, {\rm { and }} \,\,\,\,\,  \Jac_\C(\pi)$$  are monomials (up to unit)  in local coordinates locally at each point of $X$. \end{Thm}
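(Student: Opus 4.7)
My plan is to prove Hironaka's theorem by iteratively blowing up $\mathbb{C}^N$ along carefully chosen smooth centers until both $f\circ\pi$ and $\Jac_\C(\pi)$ are monomials in local coordinates. The overall strategy is to reduce the problem to two separate but linked tasks: \emph{principalization} of the ideal $(f)$ (making $f\circ\pi$ a monomial) and \emph{embedded resolution} of the hypersurface $V(f)$ into a simple normal crossings divisor. Once the total exceptional divisor of $\pi$ has simple normal crossings support, the Jacobian condition follows automatically, because a single blowup along a smooth center of codimension $c$ has Jacobian equal to a unit times $y^{c-1}$, where $y$ is a local equation for its exceptional divisor.

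First, I would introduce an upper semi-continuous resolution invariant on $\C^N$ (and later on the successive blowups). The naive candidate is the pointwise multiplicity $\mathrm{mult}_x(f)$, with maximum stratum $\Sigma\subset\C^N$. If $\Sigma$ were smooth, one could blow it up and check, via an elementary computation in charts, that the maximum multiplicity strictly decreases. However $\Sigma$ is generally singular, so a more refined, lexicographically ordered invariant (Hironaka's characteristic exponents built from a \emph{standard basis} of the ideal, or equivalently the Bierstone--Milman invariant $\mathrm{inv}$ that records successive orders of derivatives of $f$ restricted to \emph{maximal contact} hypersurfaces and the incidence of existing exceptional components) must be used. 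The design requirement on the invariant is twofold: (i) its top locus is always smooth and in normal crossings position with the existing exceptional divisor, and (ii) blowing up this top locus strictly decreases the invariant at every point of the preimage. Constructing such an invariant and verifying both properties is essentially the whole content of Hironaka's paper, so I would cite it rather than reprove it.

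Next, I would argue termination and assemble the final morphism. Since the invariant takes values in a well-ordered set, iterating the blowups along its top stratum produces, after finitely many steps, a smooth variety $X$ with a proper birational morphism $\pi\colon X\to\C^N$ such that the strict transform of $V(f)$ is empty (so $(f\circ\pi)$ is supported on the exceptional divisor) and the total exceptional divisor $E=\sum E_i$ is a simple normal crossings divisor. Locally at any $x\in X$, some subset of the $E_i$ is cut out by part of a regular system of parameters, so $f\circ\pi=u\cdot y_1^{a_1}\cdots y_r^{a_r}$ for a unit $u$ and local coordinates $y_i$; this is the first required conclusion. For the Jacobian, an induction on the number of blowups, combined with the chain rule and the above formula for the Jacobian of a single smooth blowup, shows that $\Jac_\C(\pi)$ is also a unit times a monomial in the same local coordinates $y_1,\dots,y_r$ along the exceptional divisor.

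The main obstacle, by an enormous margin, is the construction and analysis of the resolution invariant in the second step: proving that the maximum locus is always smooth and that the invariant strictly drops after blowup requires a delicate inductive descent on dimension via maximal contact, careful bookkeeping of how derivatives transform under blowup, and a coherent treatment of the ``old'' versus ``new'' exceptional components. All the conceptually hard work in Hironaka's proof, and in every later simplification by Bierstone--Milman, Villamayor, Encinas--Hauser, Koll\'ar, and W\l odarczyk, lies here. I would therefore treat this step as a black box, invoking \cite{H64} for the existence of a resolution invariant with the stated two properties, and spend my own exposition on the (much easier) monomialization of the Jacobian and the final local-coordinate description.
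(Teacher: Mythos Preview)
The paper does not prove this statement at all: Theorem~\ref{hironaka} is simply stated as Hironaka's theorem with a citation to \cite{H64}, and is then used as a black box to compute the complex singularity exponent via monomialization. There is no ``paper's own proof'' to compare against.

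Your outline is a reasonable high-level sketch of how the proof in \cite{H64} (and its later streamlinings) actually goes, and your reduction of the Jacobian condition to the SNC property of the total exceptional divisor via the chain rule and the codimension-$(c{-}1)$ formula is correct; indeed these two facts are recorded later in the paper as Facts~\ref{f1} and the subsequent composition formula. Since you yourself invoke \cite{H64} for the construction and strict-decrease of the resolution invariant, your write-up is in effect a more expansive version of the paper's one-line citation, not an independent proof. If you intend this as a genuine proof rather than a guided citation, the gap is exactly where you identify it: the existence of an upper-semicontinuous invariant with smooth top locus that strictly drops under blowup is the entire content of Hironaka's theorem, and nothing in your text supplies it.
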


Since $X$ is a smooth complex variety, it has a natural structure of a complex manifold. Saying that $\pi$ is a morphism of algebraic varieties means simply that it is defined locally in coordinates by polynomial (hence analytic) functions, therefore $\pi$ is also a holomorphic mapping of complex manifolds. The word  ``proper" in this context can be understood in the usual analytic sense: the preimage of a compact set is compact{\footnote{Alternatively, it can be taken in the usual algebraic sense as defined in Hartshorne, \cite[Ch. 2, Section 4]{Ha}.}}.  The fact that $\pi$ is birational (meaning it has an inverse on some dense open set) is not relevant at the moment, beyond the fact that the dimension of $X$ is necessarily $N$. 

The condition that both $f \circ \pi $ and $ \Jac_\C(\pi)$ are monomials (up to unit)  locally at a point $y \in X$ means that we can find local coordinates $z_1, \ldots, z_N$ at $y$, 
such that both 
\begin{equation}\label{D}
f \circ \pi= u z_1^{a_1} \cdots z_N^{a_N},
\end{equation} 
and  the  holomorphic Jacobian{\footnote{If we write $\pi$ in local holomorphic  coordinates as $(z_1, \dots, z_N) \mapsto (f_1, \dots, f_N)$, then $\Jac_{\C}(\pi)$ is the holomorphic function obtained as  the determinant of  the $N \times N$ matrix 
$\Big(\frac{\partial f_i}{\partial z_j}\Big).$ }}
\begin{equation}\label{Jacform}
\Jac_\C(\pi)= v z_1^{k_1} \cdots z_N^{k_N}
 \end{equation} 
where $u$ and $v$ are  some regular (or analytic) functions  defined in a neighborhood  of $y$ but not  vanishing at $y$.

The properness of the map $\pi$ guarantees that the integral
$$\int\frac{1}{|f|^{2\lambda}}$$
converges in a neighborhood of the point $x$ if and only if the 
integral
$$\int\frac{\Jac_{\mathbb R}(\pi)}{|f \circ \pi |^{2\lambda}}$$
converges in a neighborhood of $\pi^{-1}(x)$, where $\Jac_{\mathbb R}(\pi)$ is the (real) Jacobian of the map $\pi$ considered as a smooth map of real $2N$-dimensional manifolds. 
Recalling that 
 $$
 \Jac_\R (\pi)=|\Jac_\C(\pi)|^2,
 $$
(see \cite[pp. 17--18]{GH}), and using that  $\pi^{-1}(x)$ is compact, Hironaka's theorem reduces the convergence of this integral to a computation with monomials in each of finitely many charts $U$  covering $X$:
$$
\int_{\pi^{-1}(B(x)) \cap U} \frac{|z_1^{k_1} \cdots z_N^{k_N}|^2}{\ \ |z_1^{a_1} \cdots z_N^{a_N}|^{2\lambda}}.
$$
Doing an analogous computation to that in Example \ref{ex2}, we can conclude that the integral is finite if and only if in each chart we have 
\begin{equation}
k_i - \lambda a_i > -1
\end{equation} 
for all $i$, or equivalently,
\begin{equation}\label{cond}
\lambda < \frac{k_i+1}{a_i}
\end{equation} 
 for all $i$. Hence
\begin{equation}\label{lct1}
 lct_x(f)=\!\!\!\!\!\!\min_{
{\scriptstyle \begin{array}{c}
\vspace{-.2cm}
\scriptstyle i\\
\scriptstyle\text{ all charts}
\end{array}}}
\!\!\!\!\!\!\left\{ \frac{k_i+1}{a_i} \right\}.
\end{equation}In particular,  remembering that the map was proper so that only finitely many charts are at issue here, we have:
\begin{Cor}
The complex singularity exponent of a complex polynomial at any point is a rational number.
\end{Cor}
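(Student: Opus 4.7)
The plan is to read off rationality directly from formula (\ref{lct1}). That formula expresses $lct_x(f)$ as a minimum of ratios $(k_i+1)/a_i$, where $k_i$ and $a_i$ are non-negative integers coming from the local monomializations (\ref{D}) and (\ref{Jacform}). Indices with $a_i = 0$ give vacuous conditions on $\lambda$ and can simply be dropped from the minimum, while for $a_i \geq 1$ the ratio $(k_i+1)/a_i$ is manifestly a positive rational number. So the corollary will follow as soon as the indexing set of (\ref{lct1}) is seen to be finite.

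There are two sources of finiteness to check. Within any single chart $U \subset X$, the local product defining $f \circ \pi$ (and similarly $\Jac_\C(\pi)$) runs over just the $N$ local coordinates, so at most $N$ ratios arise per chart. To bound the number of charts that can be relevant, I would invoke the properness of $\pi$: choosing $B(x)$ with compact closure $\overline{B(x)}$, the preimage $\pi^{-1}(\overline{B(x)})$ is a compact subset of $X$. Since $X$ is covered by charts in which $f \circ \pi$ and $\Jac_\C(\pi)$ are simultaneously monomialized, compactness extracts a finite subcover of $\pi^{-1}(\overline{B(x)})$. Outside this compact set the integrand has no effect on convergence near $x$, so only these finitely many charts contribute to the criterion (\ref{cond}).

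Combining these two observations, $lct_x(f)$ is a minimum of a finite collection of positive rationals, hence rational. I do not foresee any real obstacle here; the substantive work — the change of variables using $\Jac_\R(\pi) = |\Jac_\C(\pi)|^2$, the monomial integration of Example \ref{ex2}, and the extraction of the critical exponents in each chart — is all carried out in the paragraphs leading up to (\ref{lct1}), and the corollary amounts to packaging that computation together with the compactness argument above.
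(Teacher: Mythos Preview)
Your proposal is correct and follows exactly the approach the paper takes: the paper simply notes, in the sentence immediately preceding the corollary, that ``remembering that the map was proper so that only finitely many charts are at issue here,'' formula~(\ref{lct1}) exhibits $lct_x(f)$ as a finite minimum of positive rationals. You have merely spelled out the compactness argument behind that parenthetical remark in more detail than the paper does.
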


 % \overset{\mbox{\tiny ch. coord $\pi$}}

\subsection{\bf Algebro-Geometric Approach}\label{febrero}   
In the world of algebraic geometry, we might attempt to measure the singularities of $f$ by trying to measure the complexity of a resolution of its singularities. Hironaka's theorem can be stated as follows:
\begin{Thm}\label{hironaka2}
Fix a polynomial (or analytic)  function $f$ on $\mathbb C^N$. There exists a proper birational morphism $X \overset{\pi}{\longrightarrow} \C^N$ from a smooth variety $X$  such that  the pull-back of $f$ defines a divisor $F_{\pi}$ whose support has simple normal crossings, and which  is also in normal crossings with the exceptional divisor (the locus of points on $X$ at which $\pi$ fails to be an isomorphism). Furthermore, the morphism $\pi$ can be assumed to be an isomorphism outside the singular set of $f$.  \end{Thm}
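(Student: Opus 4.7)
The plan is to derive Theorem \ref{hironaka2} from the monomialization statement of Theorem \ref{hironaka}, which we may take as given. The key observation is that ``monomial times a unit in local coordinates'' translates directly into ``simple normal crossings'' at the level of divisors: if a regular function on a smooth variety has the form $u\cdot z_1^{a_1}\cdots z_N^{a_N}$ in local coordinates at a point $y$, with $u(y)\neq 0$, then its associated divisor is $\sum a_i\{z_i = 0\}$, a sum of smooth hypersurfaces meeting transversally at $y$. So the two formulations are essentially the same local data, packaged differently.

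First I would use (\ref{D}) to read off the SNC condition for $F_\pi := \Div(f\circ\pi)$. At each point $y\in X$, in the local coordinates provided by Theorem \ref{hironaka} we have $f\circ \pi = u\cdot z_1^{a_1}\cdots z_N^{a_N}$ with $u(y)\neq 0$, hence $F_\pi$ is locally $\sum_{a_i>0}a_i\{z_i=0\}$. Its support is contained in the union of the coordinate hyperplanes $\{z_i=0\}$, which is by definition a simple normal crossings configuration.

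Next I would identify the exceptional divisor $E_\pi$ with the vanishing locus of the holomorphic Jacobian. Since $\pi$ is a proper birational map between smooth $N$-folds, it is a local isomorphism exactly where $\Jac_\C(\pi)\neq 0$, so the exceptional locus coincides set-theoretically with $\{\Jac_\C(\pi) = 0\}$. By (\ref{Jacform}), this locus is locally $\bigcup_{k_i>0}\{z_i=0\}$, again a subset of the same coordinate hyperplanes supporting $F_\pi$. Therefore $F_\pi$ and $E_\pi$ are jointly supported on a subset of a single normal crossings configuration, which is precisely what the theorem asks.

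The last clause --- that $\pi$ may be taken to be an isomorphism outside $\Sing(f)$ --- is the step I expect to be most delicate, as Theorem \ref{hironaka} as stated only asserts monomialization and says nothing about where $\pi$ is an isomorphism. To secure this refinement I would invoke the ``algorithmic'' versions of resolution of singularities due to Bierstone--Milman, Villamayor, W\l odarczyk, and Koll\'ar, in which $\pi$ is built as a composition of blowups along smooth centers contained in the singular locus of the current (weak) transform. Since every blowup is centered inside the singular locus of $V(f)$ (or its successive transforms), the composition is automatically an isomorphism over the smooth locus of $V(f)$. The genuinely hard part, of course, is Hironaka's theorem itself, whose proof is far beyond the scope of these notes and which we simply cite from \cite{H64}.
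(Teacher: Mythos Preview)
Your proposal is correct and matches the paper's own treatment: the paper does not give a formal proof of Theorem~\ref{hironaka2} but simply remarks that it is a restatement of Theorem~\ref{hironaka} (the monomialization form), noting in a footnote that the \emph{simple} normal crossings condition is a tiny bit stronger but immaterial to the discussion. Your more detailed unpacking of why monomial-times-unit in local coordinates yields an SNC divisor, and your observation that the final clause about $\pi$ being an isomorphism off $\Sing(f)$ requires the stronger blow-up form of Hironaka's theorem, are exactly the points the paper touches on informally in the surrounding text.
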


The proper birational morphism $\pi$ is usually called a {\it log resolution\/} of $f$ in this context. The support of the divisor defined by the    pull-back  of $f$ is simply the zero set of $f\circ \pi$. The condition that it has normal crossings means  that it is a union of smooth hypersurfaces meeting transversely. In more algebraic language, a divisor with normal crossing support is one whose equation  can be written as a monomial in local coordinates at each point of $X$. Thus
Theorem \ref{hironaka2} is really just a restatement of Theorem \ref{hironaka}.{\footnote{As stated here, Theorem \ref{hironaka2} is actually a tiny bit stronger, since the condition that we have {\it simple \/} normal crossings rules out self-crossings. The difference is immaterial to our discussion.}}

Hironaka actually proved more: such a log resolution can be constructed by a sequence of blowings-up at smooth centers. We might consider the polynomial $f$ to be ``more singular" if the number of blowings up required to resolve $f$, and their relative complicatedness, is great. However, because there is no canonical way to resolve singularities, we need a way to compare across different resolutions. This is done with the canonical divisor.

\subsection{The canonical divisor of a map.}  Fix a proper birational morphism $X \overset{\pi}\longrightarrow Y$ between smooth varieties. The  holomorphic Jacobian (determinant) $\Jac_{\C}(\pi)$ can be viewed as a regular function locally on charts of  $X$.  Its zero set (counting multiplicity) is  the  {\it canonical divisor}  of $\pi$ (or {\it relative canonical divisor} of $X$ over $Y$), denoted by $K_{\pi}$. Because the Jacobian matrix is invertible at $x \in X$ if and only if $\pi$ is an isomorphism there, the  canonical divisor of $\pi$ is supported precisely on the exceptional set $E$, which by definition consists of the points in $X$ at which $\pi$ is not an isomorphism. 
In particular,  since it is the locally the zero set of this Jacobian determinant, 
 the exceptional set $E$ is  always a codimension one subvariety of $X$. Moreover, this exceptional set is  more naturally considered as a {\it divisor:} we label each of the components of $E$ by 
 by  the order of vanishing of the Jacobian along it. This is the canonical divisor $K_{\pi}$. That is, 
$$K_{\pi} = {\rm div}(\Jac_{\C}(\pi)) =  \sum k_i E_i,$$
where the sum ranges through all of the components $E_i$ of the exceptional set $E$ and where  $k_i $ is the order of vanishing{\footnote{
These are the same $k_i$ appearing in expressions (\ref{Jacform}) as we range over all charts of $X$.  Note that there are typically many more than $N$ components $E_i$, despite the fact that in the expression (\ref{Jacform}) we were only seeing at most $N$ of them at a time in each chart.}}
of $\Jac_{\C}(\pi)$ along $E_i$. Thus we can view the canonical divisor $K_{\pi}$ as a precise ``difference" between birationally equivalent  varieties  $X$ and $Y$.

To measure the singularities of a polynomial $f$, consider a log resolution $X \overset{\pi}\longrightarrow \C^N$.  The polynomial $f$  defines a simple crossing divisor $F_{\pi}$  on $X$, namely  the zero set (with multiplicities) of the regular function $f \circ \pi$, 
 $$F_{\pi} = {\rm div}(f \circ \pi) = \sum a_i D_i 
 $$
 where the $D_i$ range through all irreducible divisors on $X$ and the $a_i$ are the orders of vanishing\footnote{Of course, the order of vanishing is zero along any irreducible divisor not in the support of $F$, so the sum is finite. Again,  these are the same $a_i$ as in formula (\ref{D}); there are typically many divisors in the support of $\pi^* F$ although in formula (\ref{D}) we see at most  $N$ in each chart.} of $f\circ \pi$ along each.
 If we denote the divisor of $f$ in $\mathbb C^N$ by $F$, then $F_{\pi}$ is simply $\pi^*F$. 
 There are two types of divisors in the support of $F_{\pi}$: the birational transforms $\widetilde F_i$ of the components of $F$, and exceptional divisors $E_i$.  All are smooth. Note that locally in charts,  both types of divisors---the  $E_i$ and the $\widetilde F_i$--- are defined by some local coordinates $z_i$ on $X$.

  Using this language, we examine our computation for the convergence  of the integral
$$\int_{B(x)}\frac{1}{|f|^{2\lambda}}. $$ The  condition  (\ref{cond}) that $k_i - \lambda a_i > -1$
is equivalent to the condition that all coefficients of the $\R$-divisor 
$$
K_\pi-\lambda F_{\pi}
$$
are greater than $-1$.
Put differently, the integral $\int \frac{1}{|f|^{2 \lambda}}$  converges in a neighborhood of $x$ if and only if the ``round up" divisor{\footnote{
 Given a divisor $D$ with real coefficients, we define the round up  $\lceil D\rceil$ as the integral divisor obtained by rounding up all coefficients of prime divisors to the nearest integer. In the same way, $\lfloor D\rfloor $ is obtained by rounding down.}}
$$
\lceil K_\pi-\lambda F_{\pi} \rceil
$$
is effective. [Strictly speaking, since we are computing the complex singularity exponent  at a particular point $x$, we should throw away any components of $K_{\pi} - \lambda F_{\pi}$
whose image on $\mathbb C^N$ does not contain $x$; that is, we should consider a log resolution of singularities only in a sufficiently small neighborhood of $x$].

Again we arrive at the following formula for the complex singularity exponent of $f$ at $x$:
\begin{Cor}
Let $\pi: X \longrightarrow \C^N$ be a log resolution of the polynomial $f$. If we write 
\begin{equation}\label{eq2}
K_{\pi} = \sum k_i D_i, \,\,\,\,\,\,\,\,\,\,\,{\rm and}\,\,\,\,\,\,\,\,\,\,\,\,\,\, {\rm div}(f \circ \pi) = \sum a_i D_i,
\end{equation}
where the $D_i$ range through all  irreducible divisors on $X$, then the complex singularity exponent or the log canonical threshold of $f$ at at point $x$ is the  minimum, taken over all indices $i$ such that $x \in \pi(D_i)$, of the rational numbers
$$ \frac{k_i+ 1}{a_i}.$$
\end{Cor}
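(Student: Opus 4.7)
The plan is to derive this formula directly from the analytic definition of $lct_x(f)$ by transporting the integral $\int_{B(x)} \frac{1}{|f|^{2\lambda}}$ to the log resolution $X$ and reducing to the monomial computation already carried out in Example \ref{ex2}. Everything in the statement is essentially collated from the discussion preceding it; what remains is to organize the argument cleanly.

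First, I would apply the change of variables formula to the proper birational map $\pi : X \to \C^N$. Since $\pi$ is an isomorphism outside a proper analytic subset and the real Jacobian of a holomorphic map is $\Jac_\R(\pi) = |\Jac_\C(\pi)|^2$, convergence of $\int_{B(x)} |f|^{-2\lambda}$ is equivalent to convergence of
$$
\int_{\pi^{-1}(B(x))} \frac{|\Jac_\C(\pi)|^2}{|f\circ \pi|^{2\lambda}}.
$$
Properness of $\pi$ ensures that $\pi^{-1}(B(x))$ is compact (for $B(x)$ a small closed ball), so by choosing $B(x)$ small enough it can be covered by finitely many coordinate charts $U$ on which Hironaka's theorem gives simultaneous monomial expressions $f\circ \pi = u\, z_1^{a_1}\cdots z_N^{a_N}$ and $\Jac_\C(\pi) = v\, z_1^{k_1}\cdots z_N^{k_N}$, with $u,v$ nonvanishing units. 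In each such chart the integral becomes, up to bounded factors, the monomial integral already analyzed in Example \ref{ex2}.

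Next, I would apply the Fubini-type argument of Example \ref{ex2} chart by chart: the integral $\int_U |z_1|^{2(k_1-\lambda a_1)}\cdots |z_N|^{2(k_N-\lambda a_N)}$ (after passing to polar coordinates and accounting for the radial factors $r_i\,dr_i\,d\vartheta_i$) converges if and only if $k_i - \lambda a_i > -1$ for every $i$, equivalently $\lambda < \frac{k_i+1}{a_i}$ for every $i$. Taking the worst chart and the worst coordinate gives
$$
lct_x(f) \;=\; \min \left\{ \tfrac{k_i+1}{a_i} \;:\; D_i \text{ appears with } \pi(D_i) \ni x \right\},
$$
where the $k_i, a_i$ are as in \eqref{eq2}. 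The restriction to $D_i$ with $\pi(D_i) \ni x$ comes from the observation (already noted in the excerpt) that components of $K_\pi - \lambda F_\pi$ whose images miss a neighborhood of $x$ do not obstruct local integrability at $x$, so they can be dropped after shrinking the log resolution to a neighborhood of $x$.

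The only mildly delicate point is ensuring that the local monomial exponents appearing in the various charts are precisely the global coefficients $k_i$ and $a_i$ along the irreducible divisors $D_i$ of $X$. This is really a bookkeeping issue: along a fixed irreducible component $D_i$, the order of vanishing of $\Jac_\C(\pi)$ (respectively $f\circ \pi$) is the coefficient $k_i$ (respectively $a_i$) of $D_i$ in $K_\pi$ (respectively $F_\pi = \pi^*F$), and in any chart meeting $D_i$ this order is read off as the exponent of the local equation of $D_i$. Once this identification is made, the minimum over charts and coordinates coincides with the minimum over the global divisors $D_i$ meeting $\pi^{-1}(x)$, yielding the stated formula.
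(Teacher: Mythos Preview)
Your proposal is correct and follows essentially the same approach as the paper: the Corollary is not given a separate proof but is rather a restatement of the discussion in \S\ref{exa3}, which pulls back the integral via the proper birational map $\pi$, uses $\Jac_\R(\pi)=|\Jac_\C(\pi)|^2$, covers $\pi^{-1}(x)$ by finitely many charts by compactness, and reduces to the monomial computation of Example~\ref{ex2} to obtain formula~(\ref{lct1}). Your only addition is the explicit bookkeeping that matches the local exponents in each chart with the global coefficients $k_i,a_i$ of the irreducible divisors $D_i$, which the paper leaves implicit.
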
 

The complex singularity exponent is better known in algebraic geometry as the log canonical threshold.

\begin{Rmk} 
 The condition that $\lceil K_\pi-\lambda F_{\pi} \rceil
$ is effective is independent of the choice of log resolution. This follows from our characterization of the convergence of the integral but can also be shown directly using the tools of algebraic geometry (see \cite{KoM}). Although we did not motivate the study of $K_\pi-\lambda F_{\pi}$ in purely algebro-geometric terms, the $\mathbb R$- divisors $K_\pi-\lambda F $ 
 turn out to be quite natural in birational algebraic geometry, without reference to the integrals. See, for example, \cite{Ko}. In any case, our discussion shows that the definition of log canonical threshold can be restated as follows: 
\end{Rmk}

\begin{defi}\label{lct} The \emph{log canonical threshold} of  a polynomial $f$   is defined as
$$ lct_x(f):=\sup \left\{ \lambda  \in \R_{+} \ \big|\  \lceil K_{\pi} - \lambda F_{\pi}\rceil\,\,\,\, {\rm{is \,\,\,\, effective\,}} \right\},$$
where $X \overset\pi\longrightarrow \C^N$ is any log resolution of $f$ (in a neighborhood of $x$), $K_{\pi}$ is its relative canonical divisor, and $F_{\pi} $ is the divisor on $X$ defined by $f \circ \pi$.  We can also define the global log canonical threshold by taking $\pi$ to be a resolution at all points, not just in a neighborhood of $x$.
\end{defi}

\begin{Rmk}
Note that loosely speaking, the more complicated the resolution, the more likely $\lambda$ will have to be small in order make $K_{\pi} - \lambda F_{\pi}$ close to effective.  This essentially measures the complexity of the pullback of $f$ to the log resolution. The presence of the  $K_{\pi}$ term accounts for the added multiplicity that would have been present in {\it any\/} resolution, because of the nature of blowing up $\C^N$ to get $X$, thus ``standardizing" across different resolutions.

It is also clear from this point of view that $\lceil K_{\pi} - \lambda F_{\pi}\rceil$ is always effective for very small (positive) $\lambda$, and that as we enlarge $\lambda$ it stays effective until we suddenly hit the log canonical threshold of $f$, at which point at least one coefficient is exactly negative one.
\end{Rmk}

\begin{Rmk}\label{name}  The name {\it log canonical}  comes from birational geometry. A pair $(Y, D)$ consisting of a $\mathbb Q$-divisor on a smooth variety $Y$ is said to be log canonical if, for any  proper birational morphism $ \pi: X \longrightarrow Y$
with $X$ smooth (or equivalently, any fixed log resolution), the divisor $ K_{\pi} - \pi^*D $ has all coefficients $\geq -1.$  This condition is independent of the choice of $\pi$ (see \cite{KoM}). Thus the log canonical threshold of $f$ at $x$  is the supremum, over positive $\lambda \in \R$ such that $(\C^n, \lambda {\rm div}(f))$ is log canonical in a neighborhood of $x$.  
\end{Rmk}

\begin{ex} The log canonical threshold of any complex polynomial  $f$ is bounded above by one. Indeed, suppose for simplicity that $f$ is irreducible, defining a hypersurface $D$ with isolated singularity at $x$.
 Let $\pi: X \longrightarrow \C^N$ be a log resolution of $f$. We have
$$
K_{\pi} = \sum k_i E_i,
$$ where all the $E_i$ are exceptional, and 
$$
{\rm div} (f \circ \pi) = \sum a_i E_i + \widetilde D,
$$ where the $E_i$ are exceptional and  $\widetilde D$ is the birational transform of $D$ on $X$.
Then the log canonical threshold is the minimum value of 
\begin{equation}\label{lct2}
\!\!\!\!\!\!\min_{
{\scriptstyle \begin{array}{c}
\vspace{-.2cm}
\scriptstyle i\\
\scriptstyle\text{ }
\end{array}}}
\left\{ \frac{k_i+1}{a_i}, 1 \right\}
\end{equation}
as we range through the exceptional divisors of $\pi$.
More generally, the argument adapts immediately to show that if  $f$ factors into irreducibles as $f = f_1^{b_1}f_2^{b_2} \cdots f_t^{b_t}$, then the log canonical threshold is bounded above by the minimal value of $\frac{1}{b_i}$.
\end{ex}

\subsection{\bf Computations of Log Canonical Thresholds.}  

The canonical divisor of a morphism plays a starring role in birational geometry, and in particular, as we have seen, in the computation of the log canonical threshold. 
Before computing some more examples, we isolate two helpful properties of $K_{\pi}$.
\begin{fact}\label{f1}
Let $X \overset{\pi}{\longrightarrow}Y $ be the blow-up along a smooth subvariety of codimension $c$ in the smooth variety $Y$. Then the relative canonical divisor is
$$K_\pi=(c-1)E,$$
where $E$ denotes the exceptional divisor of the blow-up.
\end{fact}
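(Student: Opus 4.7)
The plan is to compute $K_\pi$ directly from its definition as $\operatorname{div}(\operatorname{Jac}_{\C}(\pi))$ by working in local coordinates adapted to the blow-up. Since $\pi$ is an isomorphism away from $E$, the Jacobian determinant is a nonvanishing unit there, so $K_\pi$ is supported on $E$ and the only content of the statement is that the order of vanishing of $\operatorname{Jac}_{\C}(\pi)$ along $E$ equals $c-1$. Because this is a local statement, I may replace $Y$ by a neighborhood of an arbitrary point of the smooth center $Z$ and choose local coordinates $y_1,\ldots,y_N$ on $Y$ such that $Z$ is cut out by $y_1=\cdots=y_c=0$.

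Next I would recall the standard description of the blow-up of $Y$ along $Z$: it is covered by $c$ affine charts $U_1,\ldots,U_c$, where $U_i$ has coordinates $z_1,\ldots,z_N$ and the morphism $\pi|_{U_i}\colon U_i\to Y$ is given by
\begin{equation*}
y_i = z_i, \qquad y_j = z_i z_j \ \text{ for } j\in\{1,\ldots,c\}\setminus\{i\}, \qquad y_j=z_j \ \text{ for } j>c.
\end{equation*}
In the chart $U_i$ the exceptional divisor $E$ is cut out by $z_i=0$, so it suffices to compute the order of vanishing of $\operatorname{Jac}_{\C}(\pi|_{U_i})$ along $z_i=0$.

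The main (routine) step is the determinant computation. Up to a permutation of rows and columns, the Jacobian matrix $\bigl(\partial y_j/\partial z_k\bigr)$ is block lower-triangular: the lower-right block of size $(N-c)\times(N-c)$ is the identity, while the upper-left $c\times c$ block has $z_i$ on its diagonal in every row except the row corresponding to $y_i$, where the diagonal entry is $1$ and the off-diagonal entry (in column $z_i$) is also $1$. Expanding this $c\times c$ determinant along the $y_i$-row (or observing the diagonal block structure) gives $\operatorname{Jac}_{\C}(\pi|_{U_i}) = z_i^{\,c-1}$ up to a sign. Hence the order of vanishing of the Jacobian along $E$ is exactly $c-1$, independent of $i$, which yields $K_\pi=(c-1)E$.

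The only real obstacle is bookkeeping in the determinant computation; everything else is a direct unwinding of definitions. I would double-check the chart formulas by verifying consistency on the overlap $U_i\cap U_j$, and I would note that the answer is independent of the chart as it must be, since $E$ is irreducible and the coefficient of $E$ in $K_\pi$ is a single well-defined integer.
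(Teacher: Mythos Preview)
Your proposal is correct and follows exactly the approach the paper indicates: the paper does not actually prove this fact but simply states that ``the proof of both these facts are easy exercises in local coordinates, and left to the reader,'' which is precisely the local-chart Jacobian computation you carry out. One small quibble: your verbal description of the $c\times c$ block is slightly garbled (in the $y_i$-row the only nonzero entry is the diagonal $1$; the off-diagonal entries in column $z_i$ are $z_j$, not $1$), but the determinant $z_i^{\,c-1}$ you obtain is correct.
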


\begin{fact} Consider a sequence of  proper birational morphisms  $X_3\overset{\pi}{\longrightarrow} X_2\overset{\nu}{\longrightarrow} X_1$, where all the $X_i$ are smooth. Then,
$$K_{\nu\circ \pi}=\pi^*K_\nu + K_\pi.$$
\end{fact}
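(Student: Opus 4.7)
The plan is to reduce the claim to the multivariable chain rule for holomorphic Jacobians. Recall that the relative canonical divisor $K_\pi$ is defined locally in each chart as $\Div(\Jac_\C(\pi))$, and likewise for $K_\nu$ on $X_2$ and $K_{\nu\circ\pi}$ on $X_3$. Since the proposed equality is one of Weil divisors on the smooth variety $X_3$, it suffices to verify the corresponding equality of local defining functions in arbitrary coordinate charts.

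First, I would fix a point $p \in X_3$ and choose local holomorphic coordinates at $p$, at $\nu\circ\pi(p)$, and at $\nu(\pi(p)) \in X_1$. Expressing $\pi$ and $\nu$ as tuples of holomorphic functions in these coordinates, the classical chain rule for derivatives gives the matrix identity $J(\nu \circ \pi)_p = J\nu_{\pi(p)} \cdot J\pi_p$. Taking determinants yields the equality of holomorphic functions
\[
\Jac_\C(\nu\circ\pi) \;=\; \bigl(\Jac_\C(\nu)\circ\pi\bigr)\cdot \Jac_\C(\pi)
\]
in a neighborhood of $p$ (note that all three Jacobians are nonzero generically, since all three maps are birational).

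Second, I would apply $\Div$ to both sides. By additivity of the divisor of a product of nonzero holomorphic functions, the right-hand side becomes $\Div(\Jac_\C(\nu)\circ\pi) + K_\pi$. To identify the first summand with $\pi^*K_\nu$, I would use the following elementary observation about pullback of Cartier divisors by a dominant morphism: if $g$ is a local equation for a prime divisor $E$ on $X_2$, then $g\circ\pi$ is a local equation for $\pi^*E$ on $X_3$. Applied prime by prime to $K_\nu = \Div(\Jac_\C(\nu))$, this gives $\Div(\Jac_\C(\nu)\circ\pi) = \pi^*K_\nu$, and the formula $K_{\nu\circ\pi} = \pi^*K_\nu + K_\pi$ follows.

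The only delicate point is conceptual rather than computational: the Jacobian depends on the chosen local coordinates, so one must check that the local picture patches consistently. However, any change of holomorphic coordinates on $X_1$, $X_2$, or $X_3$ multiplies the corresponding Jacobian by a nowhere-vanishing holomorphic unit (the determinant of the change-of-coordinates matrix), which does not alter the associated divisor. Thus the displayed equation of Jacobians transforms covariantly, and taking divisors globalizes the classical chain rule into the asserted additivity $K_{\nu\circ\pi}=\pi^*K_\nu+K_\pi$.
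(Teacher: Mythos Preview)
Your proof is correct and is exactly the ``easy exercise in local coordinates'' that the paper leaves to the reader: the chain rule for holomorphic Jacobian matrices, followed by taking determinants and then divisors, is precisely the intended argument. One small slip: in your first paragraph you write ``at $\nu\circ\pi(p)$'' for the middle point, but since $\pi:X_3\to X_2$ and $\nu:X_2\to X_1$, the point in $X_2$ should be $\pi(p)$, not $\nu\circ\pi(p)$ (which already lies in $X_1$ and duplicates your third point).
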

The proof of both these facts are easy exercises in local coordinates, and left to the reader.

\begin{ex}\label{cusp0} {\it A cuspidal singularity.} We compute the log canonical threshold of the cuspidal curve $D$ given by  $f=x^2-y^3$  in $\C^2$ (at the origin, its unique singular point). The curve is easily resolved (that is, the polynomial $f$ is easily monomialized)  by a sequence of three point blowups at points:  $X_3 \overset{\psi}{\longrightarrow}  X_2 \overset{\nu}{\longrightarrow}  X_1\overset{\phi}\longrightarrow  \C^N  $,  whose composition we denote by $\pi$, and which create exceptional divisors $E_1, E_2$ and $E_3$ respectively.{\footnote{In a slight, but very helpful, abuse of terminology, we use the same symbol to denote an irreducible divisor and its birational transform on any model.}}
[Here $\phi$ is the blowup at the origin, $\nu$ is the blowup of the unique intersection point with the birational transform of $D$ with $E_1$, and $\psi$ is the blowup of the unique intersection point of the birational transform of $D$ on $X_2$ with $E_2$]. There are four relevant divisors on $X_3$ to consider, the three exceptional divisors   $E_1, E_2$ and $E_3$, and  the birational transform of $D$ on $X_3$.  Using the two facts above, it is easy to compute that 
$$
K_{\pi} = E_1 + 2E_2 + 4E_3
$$
and
$$
F=\Div (f \circ \pi)= D+  2E_1+3E_2 + 6E_3
$$
Hence $lct(f)=\frac{5}{6}$. 
\end{ex}

\vspace{0.1cm}

\begin{ex} As an  exercise, the reader can compute that for  $f=x^m - y^n$ with $\gcd(m,n)=1$, then $lct(f)=\frac{1}{m} + \frac{1}{n}$. The resolution is constructed as in Example \ref{cusp0} but may require a few more blowups to resolve. 
\end{ex}

\vspace{0.2cm}

\begin{ex} Let $f$ be a homogenous polynomial  of degree $d$ in $N$ variables, with an isolated singularity at the origin. Then  $lct(f)=\frac{N}{d},$  if $d \geq N$ and $1$ otherwise.
  Indeed,
one readily checks that blowing up the origin, we obtain a log resolution, $X \overset{\pi}\longrightarrow \C^N$, with one exceptional component $E$. 
Using Fact \ref{f1}  above, we compute that 
$$
K_{\pi} = (N-1) E.
$$ Also, the divisor $D$ defined by $f$ pulls back to 
$$
F  = dE +  D,
$$ where 
again, $D$ denotes also the birational transform of $D$ on $X$. Thus
$$lct(f) =\min\left\{\frac{(N-1)+1}{d}, \frac{ 0+ 1}{1}\right\} =  \min\left\{\frac{N}{d}, {1}\right\}.$$
\end{ex}

\begin{Rmk}
The log canonical threshold  describes the singularity but it does not characterize it. For example, the previous example gives examples of numerous non-isomorphic non-smooth points whose log canonical threshold is one, the same as a smooth point.
\end{Rmk}

\begin{Rmk}
In general it is hard to compute the log canonical threshold, but there are algorithms to compute it in special cases such as the monomial case (\cite{How}),  the toric case (\cite{Bl}) or the case of two variables (\cite{Tu}). In all these cases, the reason the log canonical threshold can be computed is that a resolution of singularities can be explicitly understood.
\end{Rmk}

\subsection{\bf Multiplier ideals and Jumping Numbers.}
Our definition of log canonical threshold leads naturally to a family of richer invariants called the {\it multiplier ideals} of $f$,  which are  ideals in the polynomial ring indexed by the positive real numbers.

Again, multiplier ideals can be defined analytically or algebro-geometrically.

\begin{defi}(Analytic Definition, cf. \cite{De1}). Fix $f \in \C[x_1, \ldots, x_n]$. For each $\lambda \in \R_+$, define the \emph{multiplier ideal} of $f$ as
$$ \mc{J}( f^\lambda)=\left\{ h \in \C[x_1, \ldots,x_n]\ \big|\  {\hbox{ there exists a neighborhood $B$ of $x$ such that}}   \int_{B}\frac{|h|^2}{\ |f|^{2\lambda}} < \infty\right\}.$$
Thus the multiplier ideals consist of functions that can be used as ``multipliers" to make the integral converge. It is easy to check that this set  $\mc{J}(f^\lambda)$ is in fact an ideal of the ring $\C[x_1, \dots, x_n].$
 \end{defi}

Equivalently,  we define the multiplier ideal  in an algebro-geometric context using a log-resolution.

\begin{defi} (Algebro-Geometric Definition). \label{numero0}
Fix $f \in \C[x_1, \ldots, x_n]$. 
For each $\lambda \in \R_+$, define the \emph{multiplier ideal} of $f$ as 
$$
\mc{J}( f^\lambda)=\pi_*\calo_X(\lceil K_\pi-\lambda F_{\pi}\rceil),
$$ 
where $X \overset{\pi}\longrightarrow \C^n$ is a log-resolution of $f$, $K_{\pi}$ is its relative canonical divisor and  $F_{\pi}$ is the divisor on $X$ determined by $f\circ \pi$.
These are the polynomials whose pull-backs to $X$ have vanishing no worse than that of the divisor $K_{\pi} - \lambda F_{\pi}$.
 \end{defi}

Recall that if $D$ is a divisor on a smooth variety $X$, the notation $\calo_X(D)$ denotes the sheaf of rational functions $g$ on $X$ such that div$(g) + D $ is effective. Thus in concrete terms, the multiplier ideal  is
$$
\pi_*\calo_X(\lceil K_\pi-\lambda F\rceil)
= 
\{ h \in \C[x_1, \ldots, x_n] \,\, | \,\, {\rm{div}}( h \circ \pi) + \lceil K_\pi-\lambda F\rceil \geq 0\}.
$$ 
It is straightforward to check that the argument we gave for 
  translating the analytic  definition of the log canonical threshold into algebraic geometry can be used to see that these two definitions of multiplier ideals are equivalent. Moreover, this also shows that Definition \ref{numero0} is independent of the choice of resolution. For a direct algebro-geometric proof, see \cite[Chapter 9]{Laz}.
   
   \begin{Rmk} Because $K_{\pi}$ is an integral divisor, we have $\lceil K_\pi-\lambda F_{\pi}\rceil = K_{\pi} - \lfloor \lambda F_{\pi}\rfloor$. We caution the reader delving deeper into the subject, however, that 
   when the notion of multiplier ideals and log canonical thresholds are generalized to divisors on singular ambient spaces, there are situations in which $K_{\pi}$ is a non-integral $\mathbb Q$-divisor. In this case, we can not assume that  $\lceil K_\pi-\lambda F_{\pi}\rceil = K_{\pi} - \lfloor \lambda F_{\pi}\rfloor$.
   \end{Rmk}
   
   \begin{Prop}\label{basic} Fix a polynomial $f$ and view its multiplier ideals $ \mc{J}(f^\lambda)$ as a family of ideals varying with $\lambda$. Then the following properties hold:
   \begin{enumerate}
    \item For $\lambda \in \R_+$ sufficiently small, $ \mc{J}(f^\lambda)$  is the unit ideal. 
    \item If $\lambda > {\lambda}'$, then $ \mc{J}(f^{\lambda'}) \supset  \mc{J}(f^\lambda).$
  \item  The log canonical threshold of $f$ is 
 $$lct(f)=\sup \{ \lambda \ | \ \J(f^\lambda)=(1) \}.$$
  \item For each fixed value of $\lambda$,   we have  $\J(f^\lambda)= \J(f^{\lambda+\varepsilon})$ for small enough positive $\varepsilon$. (How small is small enough  depends on $\lambda$).
 \item  There exist certain $\lambda\in \R_+$ such that $\J(f^{\lambda - \varepsilon})\supsetneq \J(f^{\lambda})$ for all positive values of $\varepsilon$.
\end{enumerate}
\end{Prop}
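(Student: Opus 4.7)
My plan is to prove all five parts from the algebro-geometric description $\mathcal{J}(f^\lambda) = \pi_*\calo_X(\lceil K_\pi - \lambda F_\pi \rceil)$ on a fixed log resolution $\pi$. The two key structural observations are that both $K_\pi$ and $F_\pi$ are effective integral divisors on $X$ (the former because $\Jac_\C(\pi)$ is a regular function, the latter because it is $\pi^\ast F$ for an effective $F$) and that $F_\pi$ is supported on only finitely many prime divisors $D_i$. I would also use the unwinding $\pi_*\calo_X(D) = \{h : \Div(h\circ\pi) + D \geq 0\}$; in particular, $\mathcal{J}(f^\lambda) = (1)$ is equivalent to $\lceil K_\pi - \lambda F_\pi \rceil$ being effective.

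For (1), pick $\lambda > 0$ so small that $\lambda a_i < 1$ for each of the finitely many $D_i$ with $a_i > 0$; then every coefficient $k_i - \lambda a_i$ of $K_\pi - \lambda F_\pi$ is strictly greater than $-1$, so its round-up is effective and $\mathcal{J}(f^\lambda) = (1)$. For (2), if $\lambda > \lambda'$, then effectivity of $F_\pi$ gives $K_\pi - \lambda F_\pi \leq K_\pi - \lambda' F_\pi$ coefficient-wise, so the same holds after rounding up, and any $h$ satisfying the defining inequality for $\mathcal{J}(f^\lambda)$ automatically satisfies the weaker one for $\mathcal{J}(f^{\lambda'})$. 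For (3), combining Definition \ref{lct} with the equivalence noted above shows that the two suprema coincide on the nose.

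Part (4) is the one I regard as the main technical hurdle, since it requires a right-continuity property of the round-up. I would first record the elementary fact that for every real $x$ one has $\lceil x - \delta \rceil = \lceil x \rceil$ for all sufficiently small $\delta > 0$: if $x \notin \Z$, then $x$ lies in the open unit interval $(\lceil x\rceil - 1, \lceil x \rceil)$ and small perturbations stay there, while if $x \in \Z$, then $x - \delta \in (\lceil x\rceil - 1, \lceil x\rceil]$ for $0 < \delta < 1$, whose ceiling is still $\lceil x\rceil$. Applying this to each of the finitely many numbers $k_i - \lambda a_i$ with $a_i > 0$ and taking a common $\varepsilon > 0$ (with $\delta_i = \varepsilon a_i$), I get $\lceil K_\pi - (\lambda + \varepsilon)F_\pi\rceil = \lceil K_\pi - \lambda F_\pi \rceil$ as divisors on $X$, hence $\mathcal{J}(f^{\lambda + \varepsilon}) = \mathcal{J}(f^\lambda)$.

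Finally, (5) follows as an immediate corollary of (3) and (4) applied at $\lambda = lct(f)$: (4) provides a small $\varepsilon > 0$ with $\mathcal{J}(f^{lct(f)}) = \mathcal{J}(f^{lct(f)+\varepsilon})$, and this common ideal is strictly smaller than $(1)$ by (3) (since $lct(f) + \varepsilon$ exceeds the supremum defining the log canonical threshold); on the other hand, $\mathcal{J}(f^{lct(f) - \delta}) = (1)$ for every $0 < \delta < lct(f)$, again by (3). Thus $\lambda = lct(f)$ itself witnesses the required strict jump, proving (5).
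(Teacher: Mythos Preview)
Your proposal is correct and follows exactly the approach the paper indicates: the paper's own ``proof'' is the single sentence that all five properties are easy to verify by ``thinking of what happens with the rounding up of the divisor $K_{\pi} - \lambda F$ as $\lambda$ changes,'' and you have carefully fleshed out precisely that idea. The only cosmetic point is that in your argument for (5), the claim $\mathcal{J}(f^{lct(f)-\delta}) = (1)$ for all $0 < \delta < lct(f)$ really uses (2) together with (3) (monotonicity turns the supremum in (3) into an interval), not (3) alone; but this is implicit and not a gap.
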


All of these properties are easy to verify, thinking of what happens with the rounding up of the  divisor $K_{\pi} - \lambda F$ as $\lambda$ changes.
As we imagine starting with a very small $\lambda$ and increasing it,  the properties above can be summarized by the following diagram:

\setlength{\unitlength}{0.9mm}%%%% Para decidir unidades!
\begin{center}
\begin{picture}(120,18)
%\allinethickness{1pt}
\put(0,10){\makebox(0,0){$[$}}
\put(0,10){\line(1,0){64}}
\put(68,10){\makebox(0,0){$\dots$}}
\put(71,10){\vector(1,0){49}}
\put(130,10){\makebox(0,0){{\small $\lambda$-axis}}}

\put(26,10){\makebox(0,0){$)$}}
\put(26.8,10){\makebox(0,0){$[$}}
\put(26.8,5.5){\makebox(0,0){\small $c_1$}}
\put(13,13){\makebox(0,0){\small${\J=(1)}$}}

\put(55,10){\makebox(0,0){$)$}}
\put(55.8,10){\makebox(0,0){$[$}}
\put(55.8,5.5){\makebox(0,0){\small $c_2$}}
\put(42,13){\makebox(0,0){\small $\J\not=(1)$}}

\put(85,10){\makebox(0,0){$)$}}
\put(85.8,10){\makebox(0,0){$[$}}
\put(85.8,5.5){\makebox(0,0){\small $c_n$}}

\put(63,0){\makebox(0,0){$(1)\supsetneq\J(f^{c_1})\supsetneq\J(f^{c_2})\supsetneq\dots\supsetneq\J(f^{c_n})\supsetneq \dots$}}
\end{picture}

\vspace{0.1cm}
\small{There are certain critical exponents $c_i$ for which the multiplier ideal ``jumps." }
\end{center}

The critical numbers  $\lambda$ described in (5) and denoted by $c_i$ in the diagram give a sequence of numerical invariants refining the log canonical threshold, which is the smallest of these. Formally: 

\begin{defi} The jumping numbers of $f \in \C[x_1, \dots, x_n]$ are the positive real numbers $c$ such that $\J(f^{c}) \subsetneq \J(f^{c-\epsilon})$ for all positive $\epsilon$.
\end{defi}

\begin{Prop}
The jumping numbers of $f \in \C[x_1, \dots, x_n]$ are discrete and rational.
\end{Prop}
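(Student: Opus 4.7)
The plan is to use the algebro-geometric description of multiplier ideals from Definition \ref{numero0}. Fix a log resolution $\pi: X \longrightarrow \C^n$ of $f$ and write
$$K_\pi = \sum_{i} k_i D_i \qquad \text{and} \qquad F_\pi = \sum_{i} a_i D_i,$$
where the sum ranges over the finitely many irreducible divisors $D_i$ appearing in the support of either divisor (in particular, $a_i = 0$ unless $D_i$ lies in the support of $F_\pi$, and there are only finitely many indices $i$ with $a_i > 0$). Then $\mc{J}(f^\lambda) = \pi_* \calo_X(\lceil K_\pi - \lambda F_\pi \rceil)$, so the multiplier ideal depends on $\lambda$ only through the integral divisor $\lceil K_\pi - \lambda F_\pi \rceil$.

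The key observation is that the coefficient of $D_i$ in $\lceil K_\pi - \lambda F_\pi \rceil$ is $\lceil k_i - \lambda a_i \rceil$, which is locally constant in $\lambda$ and only changes as $\lambda$ increases past a value where $k_i - \lambda a_i$ becomes an integer, for some index $i$ with $a_i > 0$. Solving for these critical values of $\lambda$, we see that $\mc{J}(f^\lambda)$ can only change at numbers of the form
$$\lambda = \frac{k_i + m}{a_i}, \qquad i \text{ with } a_i > 0, \ m \in \Z_{>0}.$$
Since there are only finitely many indices $i$ with $a_i > 0$, the union of these arithmetic progressions is a discrete subset of $\Q_{>0}$: it has no accumulation points in any bounded interval of $\R_+$, and every element is rational.

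Thus the set of jumping numbers of $f$ is contained in this discrete set of rationals, so the jumping numbers themselves are discrete and rational. The only subtlety to verify is that this set really is discrete: although each progression $\{(k_i + m)/a_i : m \in \Z_{>0}\}$ is infinite, only finitely many progressions occur, and each has spacing $1/a_i > 0$, so the intersection with any bounded interval $[0, N]$ is finite. The main obstacle — if any — is simply conceptual: one must remember that independence of the log resolution is already built into the definition of $\mc{J}(f^\lambda)$ (see the remark after Definition \ref{numero0}), so the bound on candidate jumping numbers derived from this particular $\pi$ is an intrinsic bound.
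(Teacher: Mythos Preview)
Your proof is correct and follows essentially the same approach as the paper: fix a log resolution, observe that $\mc{J}(f^\lambda)$ depends on $\lambda$ only through the round-up $\lceil K_\pi - \lambda F_\pi\rceil$, and conclude that the jumping numbers lie in the finite union of arithmetic progressions $\{(k_i+m)/a_i\}_{m\in\N}$. The paper's version is terser, but the argument is the same.
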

\begin{proof}
Let $\pi: X \rightarrow \C^N$ be a log resolution of $f$, with $K_{\pi} = \sum k_i D_i$ and 
$F_{\pi}= \sum a_i D_i$, as before.  It is easy to see that the critical values of $\lceil K_{\pi} - \lambda F_{\pi} \rceil$ occur only
when $k_i - \lambda a_i \in \N$. That is, the 
jumping numbers are a subset of the numbers $\{\frac{k_i + m}{a_i}\}_{m \in \N}.$  In particular,  they are discrete and rational.
\end{proof}

 Although an infinite sequence, the jumping numbers are actually determined by finitely many: 
\begin{Thm}\label{bs}{\rm (See }{\it{ e. g.}}  \cite[Theorem 9.6.21]{Laz}{\rm ).}
Fix a polynomial $f$ as above. Then
$$\J(f^{1+\lambda})=(f)\J(f^\lambda),$$
for $\lambda \geq 0$. In particular, a positive real number $c$ is a jumping number if and only if $c+1$ is a jumping number.
\end{Thm}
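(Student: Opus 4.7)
The plan is to unwind the algebro-geometric definition of the multiplier ideal, exploit the fact that $F_{\pi}$ is an \emph{integral} divisor (so it slides through the ceiling), and then apply the projection formula to absorb $\mathcal{O}_X(-F_{\pi})$ into the principal ideal $(f)$ on $\C^N$.

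First, I would fix a log resolution $\pi: X \to \C^N$ of $f$ and use it to compute both $\mathcal{J}(f^{1+\lambda})$ and $\mathcal{J}(f^{\lambda})$. Since $F_{\pi} = \sum a_i D_i$ has integer coefficients, the rounding identity
\[
\lceil K_{\pi} - (1+\lambda) F_{\pi}\rceil = \lceil K_{\pi} - \lambda F_{\pi}\rceil - F_{\pi}
\]
holds on the nose. Hence
\[
\mathcal{J}(f^{1+\lambda}) = \pi_*\mathcal{O}_X\bigl(\lceil K_{\pi} - \lambda F_{\pi}\rceil - F_{\pi}\bigr) = \pi_*\Bigl(\mathcal{O}_X\bigl(\lceil K_{\pi} - \lambda F_{\pi}\rceil\bigr)\otimes \mathcal{O}_X(-F_{\pi})\Bigr).
\]

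Next I would use that $F_{\pi} = \pi^{*}F$, where $F = \mathrm{div}(f)$ on $\C^N$, so $\mathcal{O}_X(-F_{\pi}) = \pi^{*}\mathcal{O}_{\C^N}(-F)$. On the smooth affine target, $\mathcal{O}_{\C^N}(-F)$ is the principal ideal sheaf generated by $f$. The projection formula then gives
\[
\pi_*\Bigl(\mathcal{O}_X\bigl(\lceil K_{\pi} - \lambda F_{\pi}\rceil\bigr)\otimes \pi^{*}\mathcal{O}_{\C^N}(-F)\Bigr) = \pi_*\mathcal{O}_X\bigl(\lceil K_{\pi} - \lambda F_{\pi}\rceil\bigr) \otimes \mathcal{O}_{\C^N}(-F) = \mathcal{J}(f^{\lambda})\cdot (f),
\]
which is exactly the claimed identity. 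Concretely, this step says: a polynomial $h$ lies in $\mathcal{J}(f^{1+\lambda})$ iff $\mathrm{div}((h/f)\circ \pi) + \lceil K_{\pi} - \lambda F_{\pi}\rceil \geq 0$, and one must then argue that $h/f$ is itself a polynomial — which follows because $\mathcal{J}(f^{\lambda}) \subset \mathcal{O}_{\C^N}$ and the projection formula identification is an isomorphism of $\mathcal{O}_{\C^N}$-modules, not just a containment. I expect this bookkeeping (justifying that $h/f$ is regular, i.e., that multiplication by $f$ really gives a bijection onto $\mathcal{J}(f^{1+\lambda})$) to be the only real obstacle, and it is handled cleanly by the projection formula.

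Finally, for the statement about jumping numbers, I would note that since $f$ is a nonzerodivisor in $\C[x_1,\dots,x_n]$, multiplication by $f$ is injective and preserves strict containment in both directions. Thus for any $\varepsilon > 0$ with $0 \leq c - \varepsilon$,
\[
\mathcal{J}(f^{c}) \subsetneq \mathcal{J}(f^{c-\varepsilon}) \iff (f)\mathcal{J}(f^{c}) \subsetneq (f)\mathcal{J}(f^{c-\varepsilon}) \iff \mathcal{J}(f^{c+1}) \subsetneq \mathcal{J}(f^{c+1-\varepsilon}),
\]
so $c$ is a jumping number iff $c+1$ is.
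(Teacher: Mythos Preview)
The paper does not actually prove Theorem~\ref{bs}; it simply cites \cite[Theorem 9.6.21]{Laz} and moves on. Your argument is correct and is essentially the standard proof one finds in Lazarsfeld: the key points are exactly that $F_{\pi}$ is an integral divisor (so it passes through the ceiling), that $\mathcal{O}_X(-F_{\pi})=\pi^*\mathcal{O}_{\C^N}(-F)$, and that the projection formula then identifies $\pi_*\bigl(\mathcal{O}_X(\lceil K_\pi-\lambda F_\pi\rceil)\otimes\pi^*\mathcal{O}_{\C^N}(-F)\bigr)$ with $(f)\cdot\J(f^\lambda)$. Your deduction of the jumping-number periodicity from injectivity of multiplication by $f$ is also fine.

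One contextual remark worth making: the paper later emphasizes (in the discussion around Proposition~\ref{BS2} and in Section~4) that the general Brian\c{c}on--Skoda theorem for multiplier ideals of \emph{non-principal} ideals requires local vanishing. Your proof, by contrast, uses only the projection formula, and this is precisely because $f$ is a single polynomial so that $F_\pi$ is an honest integral Cartier divisor pulled back from downstairs. So your approach is the right one for the principal case stated here, and no vanishing theorem is needed.
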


Thus the jumping numbers are periodic and  completely determined by the finite  set of jumping numbers less than or equal to 1.

\begin{Rmk}
It is quite subtle to determine which ``candidate jumping numbers" of the form $\frac{k_i+1}{a_i}$ are actual jumping numbers. When $f$ is a polynomial in 2 variables, there is some very pretty geometry behind understanding this question. See \cite{SmT}, \cite{Tu}.
\end{Rmk}

\begin{Exercise} Show that $1$ is a jumping number of every polynomial. [Hint: if $f$ is irreducible, the jumping number 1 is contributed  by the birational transform of div$(f)$ on the log resolution].
\end{Exercise}

\begin{Exercise} Show that the jumping numbers of a smooth $f$ are the natural  numbers $1, 2, 3, \dots$
\end{Exercise}

\begin{Exercise} Using the resolution described in Example \ref{cusp0}, show that the multiplier ideals of $f = x^2 - y^3$ are as follows:
\begin{enumerate}
\item $\J(f^{\lambda}) $ is trivial for values of $\lambda$ less than $\frac{5}{6}$.
\item  $\J(f^{\lambda}) = \mf{m} = (x, y)$ for   $\frac{5}{6} \leq \lambda < 1$;
\item  $\J(f^{\lambda} ) = (f)$ for   $1\leq \lambda < \frac{11}{6}.$
\end{enumerate} Using Theorem \ref{bs}, describe the multiplier ideal of $x^2 - y^3$ for any value of $\lambda$.
\end{Exercise}

\begin{Exercise} (Harder) Show that the multiplier ideals of $f = x^2 - y^5$ are as follows:
\begin{enumerate}
\item $\J(f^{\lambda}) = R$ is for values of $\lambda$ less than $\frac{7}{10}$.
\item  $\J(f^{\lambda}) = \mf{m} = (x, y)$ for   $\frac{7}{10} \leq \lambda < \frac{9}{10}$;
\item  $\J(f^{\lambda}) =  (x, y^2)$ for   $\frac{9}{10} \leq \lambda < 1$;
\item  $\J(f^{\lambda} ) = (f)$ for   $1\leq \lambda < \frac{17}{10}.$
\end{enumerate}
\end{Exercise}

\begin{Rmk}
The jumping numbers turn out to be related to many other well-studied invariants. For example, it is shown in \cite{ELSV} that the jumping numbers of $f$ in the interval $(0,1]$  are always (negatives of) roots of the so-called Bernstein-Sato polynomial $b_f$ of $f$.  The jumping numbers can also be viewed in terms of the Hodge Spectrum arising from the monodromy action on the cohomology of the Milnor fiber of $f$  \cite{Bu1}. 
\end{Rmk}

Multiplier ideals have many additional properties, in addition to many deep applications  which we don't even begin to describe. Lazarsfeld's book \cite{Laz} gives an idea of some of these.
\vspace{0.1cm}

\section{Positive characteristic: The Frobenius map and $F$-thresholds}

A natural question arises: What about positive  characteristic? 

Fix  a polynomial $f$ over a perfect field $k$. We wish to measure the singularity of $f$ at some point where it vanishes.
For concreteness and with no essential loss of generality, say the field is $\F_p$ and the point is the origin, so that 
  $ f \in \mf{m} = (x_1, \dots, x_n)  \subset \F_p[x_1, \ldots, x_n]$.  
  How can we try to define an analog of log canonical threshold? 
  In characteristic zero, we used real analysis to  control the growth of the function $\frac{1}{|f|^{\lambda}}$ as we approached the singular points of $f$.  But in characteristic $p$, can we even talk about taking fractional powers of $f$?   
Remarkably, 
the Frobenius map gives us a tool for raising polynomials to non-integer powers, and for considering their behavior near $\mf{m}$.

\subsection{\bf The Frobenius map.}

Let $R$ be any ring of characteristic $p$, with no non-zero nilpotent elements.

\begin{defi}  The \emph{Frobenius map} $F$ is the ring homomorphism 
$$\xymatrix@C=3pc@R=0pc{
R\ar[r]^F & R\\
r \ar@{|->}[r] & r^p. 
}
$$
The image is the subring $R^p$ of $p$-th powers of $R$, which is of course  isomorphic to $R$ via $F$ (provided $R$ has no non-trivial nilpotents, so that $F$ is injective.)
\end{defi}
Nothing like this is true in characteristic zero. The point is that in characteristic $p$, the Frobenius map respects addition [$(r+s)^p=r^p+s^p$ for all $r, s \in R$], because the binomial coefficients
$\binom{p}{j}$ are congruent to $0$ modulo $ p$ for every $1\leq j\leq p-1$.

 By iterating the Frobenius map we get an infinite chain of subrings of $R$:
   \begin{equation}\label{tele}
 R \supset R^p \supset R^{p^2} \supset R^{p^3} \supset \cdots,
\end{equation}
 each isomorphic to $R$.  Alternatively,  we can imagine adjoining $p$-th roots: inside a fixed algebraic closure of the fraction field of $R$, for example, each element of $R$ has a {\it unique\/} $p$-th root. Now the ring inclusion $R^p \subset R$ is equivalent to the ring inclusion $R \subset R^{\frac{1}{p}}$; the Frobenius map gives an isomorphism between these two chains of rings. Iterating we have an increasing but essentially equivalent chain of rings
 $$
R \subset R^{\frac{1}{p}} \subset R^{\frac{1}{p^2}} \subset \dots.
$$ 
Viewing these $R^{\frac{1}{p^e}}$ as $R$-modules, it turns out that   a remarkable wealth of information about singularities is revealed by their $R$-module structure as $e \rightarrow \infty. $ Let us consider an example.

\begin{ex}  
Let $R=\F_p[x]$. The subring of $p$-th powers  is the ring  $\F_p[x^p]$ of polynomials 
in $x^p$, and similarly the overring of $p$-th roots is   $\F_p[x^{\frac{1}{p}}].$
Given any polynomial $g(x)\in\F_p[x]$, there is a unique way to write
$$g(x)=g_0(x^p)\cdot 1+g_1(x^p)\cdot x+\cdots+g_{p-1}(x^p)\cdot x^{p-1},$$
where each $g_j(x^p)\in R^p$. In fancier language, 
$\F_p[x]$ is a free  $\F_p[x^p]$ module on the basis $\{1, x, x^2, \dots, x^{p-1}\}$.
That is, there is an $\F_p[x^p]$-module homomorphism
$$
\F_p[x]\cong R^p\oplus R^p\cdot x\oplus\cdots\oplus R^p\cdot x^{p-1}.
$$
Iterating, we see that $\F_p[x]$ is free over $\F_p[x^{p^e}]$ on the basis  $\{1, x, x^2, \dots, x^{p^e-1}\}$. Equivalently, each  $\F_p[x^{\frac{1}{p^e}}]$ is a free $R$-module on the basis 
 $\big\{1, x^{\frac{1}{p^e}}, x^{\frac{2}{p^e}}, \dots, x^{\frac{p^e-1}{p^e}}\big\}$.
\end{ex}

\begin{ex}  Similarly, if $R$ is the polynomial ring  $\F_p[x_1,\ldots,x_n]$, 
then $R$ is  a free module over $R^{p^e} = \F_p[x_1^{p^e},\dots,x_n^{p^e}]$ with basis 
$$\left\{x_1^{a_1}\cdots x_n^{a_n}\right\}_{0\leq a_j\leq p^e-1,\ 1\leq j\leq n}.$$ \end{ex}

\vspace{0.2cm}
The freeness of the polynomial ring over its subring of $p$-th powers is no accident, but rather reflects the fact that the corresponding affine variety is smooth. 
The Frobenius map can be used to detect singularities quite generally:

\begin{Thm}{\rm (}Kunz, \cite[Theorem 2.1]{Ku}{\rm ).} Let $R$ be a ring of prime characteristic $p$ 
without nilpotent elements. Then  $R$ is regular if and if only the Frobenius map is flat.
\end{Thm}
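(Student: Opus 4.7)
The plan is to prove both implications separately, with the converse being the main obstacle.

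For the forward direction (regular $\Rightarrow$ Frobenius flat): flatness of a ring homomorphism can be checked stalk by stalk and is both preserved and reflected by faithfully flat base change, so I would localize $R$ at a prime and then complete. Cohen's structure theorem, applied in equal characteristic $p$, identifies the complete regular local ring obtained this way with $k[[x_1,\ldots,x_n]]$ for some field $k$ of characteristic $p$. When $k$ is perfect, the monomials $x_1^{a_1}\cdots x_n^{a_n}$ with $0 \le a_i < p$ form a free $R^p$-basis of $R$, exactly as in the polynomial-ring example preceding the theorem, so $R$ is free (hence flat) over $R^p$. In the non-perfect case one augments by lifting a $k^p$-basis of $k$ to preserve freeness.

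For the converse (Frobenius flat $\Rightarrow$ regular): localize to reduce to a Noetherian local ring $(R,\mathfrak{m},k)$ with Krull dimension $d$ and embedding dimension $s$, where regularity amounts to $s = d$. The key leverage is that iterated Frobenius $F^e$ is also flat, so each Frobenius pullback functor $F^{e*} = R \otimes_{R,F^e}(-)$ is exact. Applied to a minimal free resolution of $k$, this shows every module $R/\mathfrak{m}^{[p^e]}$ has projective dimension at most $\mathrm{pd}_R(k)$. A length computation then closes things: if $x_1,\ldots,x_s$ minimally generates $\mathfrak{m}$, the length $\ell(R/\mathfrak{m}^{[p^e]})$ grows as $p^{es}$, while the Hilbert--Samuel polynomial gives growth of order $p^{ed}$ on ideals of the form $\mathfrak{m}^{p^e}$. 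The Peskine--Szpiro intersection theorem, applied to the finite-length, finite-projective-dimension modules $R/\mathfrak{m}^{[p^e]}$, then forces the depth-dimension equality that drives $s = d$.

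The main obstacle is this final comparison: extracting the equality $s = d$ from the uniform projective-dimension bound on the modules $R/\mathfrak{m}^{[p^e]}$. In the $F$-finite case (e.g.\ $R$ essentially of finite type over a perfect field) there is a shortcut avoiding the intersection theorem: flat plus finite type over a local ring forces $F_*R$ to be free, and computing its rank via $F_*R \otimes_R k \cong R/\mathfrak{m}^{[p]}$ directly yields $s = d$. For the fully general Noetherian case one either invokes the intersection theorem or uses a completion argument to reduce to the $F$-finite setting.
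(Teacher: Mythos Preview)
The paper does not give a proof of Kunz's theorem; it only cites \cite{Ku} and immediately moves on. So there is no proof here to compare against, and your sketch must stand on its own.

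Your forward direction is correct and standard: localize, complete, apply Cohen's structure theorem, and read off freeness over $R^{p}$ from the explicit monomial basis.

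The converse direction, however, has a genuine circularity. You deduce $\mathrm{pd}_R(R/\mathfrak m^{[p^e]})\le \mathrm{pd}_R(k)$ by applying the (flat) Frobenius functor to a free resolution of $k$, and then propose to feed the ``finite-projective-dimension modules $R/\mathfrak m^{[p^e]}$'' into the Peskine--Szpiro intersection theorem. But that inequality is vacuous unless you already know $\mathrm{pd}_R(k)<\infty$, which by Auslander--Buchsbaum--Serre is exactly the regularity of $R$ you are trying to establish. Nothing in your outline independently bounds the projective dimension of $k$ or of any $R/\mathfrak m^{[p^e]}$. Your $F$-finite shortcut also stops short: freeness of $F_*R$ does give $\mathrm{rank}\,F_*R=\ell(R/\mathfrak m^{[p]})$, and comparing with the generic rank $[K:K^p]=p^{d}$ (for perfect residue field) yields $\ell(R/\mathfrak m^{[p]})=p^{d}$, but you still owe the step that this numerical equality forces $s=d$; that is a separate lemma, not an immediate consequence.

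Kunz's own route avoids projective dimension entirely and is purely numerical. Flatness of $F$ gives the multiplicativity $\ell(R/I^{[p]})=\ell(R/\mathfrak m^{[p]})\cdot\ell(R/I)$ for every $\mathfrak m$-primary $I$; applying this to a parameter ideal $\mathfrak q=(y_1,\dots,y_d)$ and comparing $\ell(R/\mathfrak q^{[p^e]})$ with the Hilbert--Samuel asymptotics of the ordinary powers $\mathfrak q^{n}$ pins down $\ell(R/\mathfrak m^{[p]})=p^{d}$ exactly. Kunz then proves, as a separate lemma, that a Noetherian local ring with $\ell(R/\mathfrak m^{[p]})=p^{\dim R}$ is regular. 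If you want to salvage your outline, this is the missing ingredient to supply.
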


Let us put Kunz's theorem more concretely in the case  we care about---the case where the Frobenius map $R\overset{F}{\lra} R$ is finite,  that is, when $R $ is  {\it finitely generated\/}  as an $R^p$-module.{\footnote{
All rings in which we are interested in this survey satisfy this condition. It is easy to check, for example, that if $R$ is finitely generated over a perfect field, or a localization of such, then the Frobenius map is finite. Similarly, so do rings of power series  over perfect fields. }} In this case, Kunz's theorem says that  that $R$ is regular if and only if  $R$ is a locally free $R^p$-module, or equivalently, if and only if $R^{1/p}$ is locally free as an $R$-module.

This leads to the natural question:  if $R$ is {\it not\/} regular, can we use Frobenius to measure its singularities? The answer is a resounding YES. This is the topic of a large and active body of research in ``$F$-singularities" which classifies singularities according to the structure of the chain of $R$-modules
$$
R \subset R^{\frac{1}{p}} \subset R^{\frac{1}{p^2} }\subset \dots.
$$
The $F$-threshold, which we now discuss, is only the beginning of a long and beautiful  story.

\subsection{$F$-threshold} Now we fix a polynomial $f$ in the ring $R=\F_p[x_1,\ldots,x_n]$.  For a rational number of the form $c=\frac{a}{p^e}$, we can consider the fractional power $f^c = f^{\frac{a}{p^e}}$ as an element of the overring $R^{1/p^e} =  \F_p[x_1^{1/p^e},\ldots,x_n^{1/p^e}] $.  This allows us to ``take fractional powers" of polynomials, analogously to what the analysis allowed us to do in Section 1, at least if we restrict ourselves to fractional powers whose denominators are powers of $p$.

In the analytic setting, we tried to measure how badly the function $\frac{1}{|f|^{\lambda}}$  ``blows up" at the singular point using integrability---this led to the complex singularity index or log canonical threshold. In this characteristic $p$ world, we can not integrate, nor does even absolute value make sense. Amazingly, however, the most naive possible way to 
talk about the function $\frac{1}{f^{c}}$ ``blowing up" {\it does \/} lead to a sensible invariant, which turns out to be very closely related to the complex singularity index. Indeed, 
we can agree that  $\frac{1}{f^c}$ certainly {\it does not\/} blow up at any point where the denominator does not vanish. 

Recall that each $R$ module $M$ can be interpreted as a coherent sheaf on the affine scheme $\Spec R$, in which case   each element  $s \in M$ is interpreted as a section of this coherent sheaf.
 Grothendieck defined the ``value" of a section $s$ at the point $\mf{P} \in \Spec R$ to be the image of $s$ under the natural map from $M$ to $M \otimes L$, where $L$ is the residue field at $\mf{P}$ \cite[Chap 2.5]{Ha}. In particular, the ``function"  $f^c$ (when $c$ is a rational number of the form $\frac{a}{p^e}$) is an element of the $R$-module $R^{1/p^e}$ (for some $e$),  and as such its ``value" at the  point $\mf{m}$ is zero if and only if $f^c \in \mf{m}R^{1/p^e}$.

So, given that integration does not make sense, we can at least  look at values of $c$ for which $\frac{1}{f^c}$  ``does not blow up at all,"  and take the supremum over all such $c$.  
This extremely naive attempt to mimic the analytic definition then leads to the following definition.
\begin{defi}\label{FTdef}
The \emph{$F$-threshold} of $f \in  \F_p[x_1,\ldots,x_n]$ at the maximal ideal $\mf{m} = (x_1, \dots, x_n)$ is defined as
$$
FT_{\mf{m}}(f)  =  \sup\Big\{c=\frac{a}{p^e} \in \mathbb Z\Big[\frac{1}{p}\Big] \,\, \ \big|\ f^c \not\in \mf{m}R^{1/p^e}\Big\}.
$$
\end{defi}
Amazingly, this appear to be the ``right" thing to do! Although we have stated the definition for polynomials over $\F_p$, any perfect field $k$, or indeed, any field $k$ of characteristic $p$ such that $[k:k^p]$ is finite works just as well.

Let us check that this definition is independent of how we write $c$. First note that 
 viewing  $f^c$ as an element of the free $R$-module  $R^{1/p^e}$,
 we can write it   uniquely as an $R$-linear combination of the basis elements
$$\{x_1^{\frac{a_1}{p^e}}\cdots x_n^{\frac{a_n}{p^e}}\}_{0\leq a_j\leq p^e-1},$$
which we abbreviate $\{\mathbf{x}^{A/p^e}\}$.
\begin{equation}\label{FTex}
f^c = f^{a/p^e}=\sum r_A \mathbf{x}^{A/p^e},
\end{equation}
for some uniquely determined 
$r_A\in R$. 
 So, an equivalent formulation of the  
$F$-threshold of $f \in  \F_p[x_1,\ldots,x_n]$ at the maximal ideal $\mf{m} = (x_1, \dots, x_n)$ is 
$$
FT_{\mf{m}}(f) = \sup\Big\{c=\frac{a}{p^e} \in \mathbb Z\Big[\frac{1}{p}\Big] \,\, \ \big|\ f^c \hbox{ has some coefficient } r_A\not\in \mf{m}\Big\},
$$
where the coefficients $r_A$ are as  in Equation (\ref{FTex}) above.

It is now easy to see that this supremum is independent of the way we write $c$. That is, 
 if we instead had written $c= \frac{ap}{p^{e+1}} $ and viewed $f^c$ as an element in the larger ring $R^{\frac{1}{p^{e+1}}}$, then when expressed $f^c$ uniquely as an 
$R$-linear combination of the basis elements $\mathbf{x}^{{A'/p^{e+1}}}$ for $R^{\frac{1}{p^{e+1}}}$, 
 the coefficients $r_{A'}$ that appear are the same elements of $R$ as appearing in expression (\ref{FTex}).

 By Nakayama's Lemma,  $f^c\not\in \mf{m} R^{1/p^e}$ if and only if 
 $f^c$  is  part of a minimal generating set for the $R$-module $R^{1/p^e}$ (after localizing at $\mf{m}$). So equivalently:
\begin{defi}
The \emph{$F$-threshold} of $f \in  \F_p[x_1,\ldots,x_n]$  at $\mf{m}$ is
\begin{align*}
FT_{\mf{m}}(f)&
=\sup\Big\{c=\frac{a}{p^e}   \in \mathbb Z\Big[\frac{1}{p}\Big] \,\, \big|\ f^c \hbox{ is part of a free basis 
for }\,\, R_{\mf{m}}^{\frac{1}{p^e}} {\rm over} \,\,\, R_{\mf{m}}\Big\}.\\
\end{align*}
\end{defi}

\begin{ex}\label{1}
The $F$-threshold of any polynomial is always bounded above by one. Indeed, let $f$ be any polynomial in $\mf{m}$. 
Since $f^1 = f \cdot 1 \in \mf{m}R^{\frac{1}{p^e}}$ for all $e$, we see that $f^{1}$ is never part of a basis for $R^{1/p^e}$ over $R$. We must raise $f$ to numbers {\it less than one\/} to get a basis element. Thus $FT_{\mf{m}}(f) \leq 1$ always.
\end{ex}

\begin{ex}
 Assume that $f$ is non-singular at $\mf{m}$. Then $f$ is part of a local system of regular parameters at $\mf{m}$, that is, one of the minimal generators for the ideal $\mf{m}$. Changing coordinates, we can assume $f = x_1$. 
For any $p^e$,  note that $$f^{\frac{p^e-1}{p^e}} = x_1^{\frac{p^e-1}{p^e}}$$
is part of a free basis for $R_\mf{m}^{\frac{1}{p^e}}$ over $R_\mf{m}$. This shows that 
 the $F$-threshold is greater or equal than $ \frac{p^e-1}{p^e}$ for all $e\geq 1$. In other words, taking the limit we see that the $F$-threshold at a smooth point  is bounded below by one. Combining with the previous example, we conclude that the $F$-threshold of a smooth point is exactly one.

\end{ex}

\begin{ex} Take $f=xy\in\F_p[x,y]$, then $(xy)^{\frac{p^e-1}{p^e}}$ is also part of a basis of the free $R$-module $R^{1/p^e}$. The same argument applies  here to show that $FT_\mf{m}(xy)=1$. In general, $FT_\mf{m}(x_1\cdots x_\ell)=1$, which is to say, the $F$-threshold of a simple normal crossings divisor is always one.  In particular, this shows that $FT_\mf{m}(f)=1$ does not imply  that $f$ is non-singular.
\end{ex}

\begin{ex}\label{44}
 Consider $f=x^m\in \F_p[x]$. Then, $(x^{m})^{\frac{a}{p^e}}$ is part of a free basis if and only if $\frac{m a}{p^e}\leq \frac{p^e-1}{p^e}$.
So $f^{\frac{a}{p^e}}$ is part of a free basis if and only if $\frac{a}{p^e} \leq \frac{1}{m} - \frac{1}{ m p^e}$.
Taking the limit, this leads to $FT_\mf{m}(x^m)=\frac{1}{m}$. In general, 
$$FT_\mf{m}(x_1^{a_1}\cdots x_\ell^{a_\ell})=\min\Big\{\frac{1}{a_i}\Big\}.$$
\end{ex}

Examples (\ref{1}) through (\ref{44})  indicate that  the $F$-threshold has many of the same features as the  log canonical threshold. Is the $F$-threshold capturing exactly ``the same" measurement of singularities as the log canonical threshold?  If a polynomial has integer coefficients, do we get the same value of the $F$-threshold modulo $p$ for all $p$? Of course, the ``same" polynomial can be more singular in some characteristics, so we expect not. But does the $F$-threshold for ``large  $p$" perhaps agree with the log canonical threshold? The following example is typical:

\begin{ex}\label{cuspide}
Consider the polynomial $f=x^2+y^3$, which we can view as a polynomial over any of the fields $\F_p$ (or $\C)$. Its $F$-threshold depends on the characteristic:
$$
FT_\mf{m}(f)=\left\{
\begin{array}{ll}
1/2 & \hbox{ if } p=2,\\
2/3 & \hbox{ if } p=3,\\
5/6 & \hbox{ if } p\equiv 1\mod 6,\\
5/6-\frac{1}{6p} & \hbox{ if } p\equiv 5\mod 6.
\end{array}\right.
$$
Viewing $f$ as a polynomial  over $\C$, we computed in Example \ref{cusp0} that its log canonical threshold is $\frac{5}{6}$. Interestingly, we see that as $p \longrightarrow \infty$, the $F$-thresholds approach the log canonical threshold. Also, there are some (in fact, infinitely many) characteristics where the $F$-threshold agrees with the log canonical threshold.  On the other hand, there are other characteristics where the polynomial is ``more singular" than expected, as reflected by a smaller $F$-threshold. For example, in characteristics $2$ and $3$, of course, we  expect ``worse" singularities, and indeed, we see the $F$-threshold is smaller in these cases.  But also the $F$-threshold detects a worse singularity for this curve in characteristics congruent to $ 5$ mod $6$, reflecting subtle number theoretic issues in that case (see \cite[Question 3.9]{MTW}).

\begin{ex} Daniel Hern\'andez has computed many examples of $F$-thresholds in his PhD thesis (c.f.  \cite{Her}), including any  ``diagonal" hypersurfaces $x_1^{a_1} + \cdots + x_n^{a_n}$ (see \cite{Her1}). There is also  an algorithm  to compute the $F$-threshold of any binomial as well; see  \cite{Her2}, \cite{ShT}. See also \cite{MTW}.
\end{ex}

\begin{ex}
Let $f\in\Z[x,y,z]$  be homogeneous of degree 3 with an isolated singularity. In particular, $f$ defines an elliptic curve in $\Proj^2$ over $\Z$.  The $F$-threshold has been computed in this case by  B. Bhatt  \cite{Bh}: 
$$FT(f)=
\left\{
\begin{array}{ll}
\displaystyle 1 &  \hbox{ if } E\in\Proj_{\F_p}^2 \hbox{ is \emph{ordinary}},\\
\displaystyle 1-\frac{1}{p} & \hbox{ if } E\in\Proj_{\F_p}^2 \hbox{ is \emph{supersingular}}.
\end{array}\right.$$
Again we see that ``more singular" polynomials have smaller $F$-thresholds. As in Example \ref{cuspide}, there are infinitely many $p$ for which the log canonical threshold and the F-threshold agree, and infinitely many $p$ for which they do not, by a result of Elkies \cite{Elk}.
\end{ex}

\subsection{Comparison of F-threshold and multiplicity.}
To compare the F-threshold with the multiplicity, we rephrase the definition still one more time. Let us first recall a well-known notation: for an ideal $I$ in an ring $R$ of characteristic $p$, let  $I^{[p^e]}$ denote the ideal of $R$ generated by the $p^{e}-th$ powers of the elements of $I$. That is, $I^{[p^e]}$ is the expansion of $I$ under Frobenius $R \rightarrow R$ sending $r \mapsto r^{p^e}$.  

\begin{defi}
The F-threshold of $f \in k[x_1, \dots, x_n]_{\mf{m}}  = R$ is
\begin{align*}
FT_{\mf{m}}(f) & =  \sup\Big\{c=\frac{a}{p^e} \in \mathbb Z\Big[\frac{1}{p}\Big] \,\, \ \big|\ f^a \not\in \mf{m}^{[p^e]}\Big\},\\
& = \inf \Big\{c=\frac{a}{p^e} \in \mathbb Z\Big[\frac{1}{p}\Big] \,\, \ \big|\ f^a \in \mf{m}^{[p^e]}\Big\}.\\
\end{align*}
\end{defi}
This is patently the same as Definition \ref{FTdef},  simply by raising to the $p^e$-th power.

On the other hand, the multiplicity of $f$ at $\mf{m}$ is defined as the largest $n$ such that $f \in \mf{m}^n$. It is trivial to check that this is equivalent to
$$
{{\rm mult}_{\mf{m}}(f)} =  sup \Big\{\frac{t}{a} \in \mathbb Q \,\, \ \big|\ f^a \in \mf{m}^t\Big\}.
$$
That is, the formula that computes the F-threshold is similar to  formula that computes the {\it reciprocal\/} of the  multiplicity, but with ``Frobenius powers" replacing  ordinary powers:
$$
\frac{1}{{\rm mult}_{\mf{m}}(f)} =  inf \Big\{\frac{a}{t} \in \mathbb Q \,\, \ \big|\ f^a \in \mf{m}^t\Big\}.
$$
 It is also not hard to check in all these  cases that the infimum (supremum) is in fact a limit.

This similarity allows us to easily prove the following comparison between multiplicity and F-threshold:
 \begin{Prop}
For $f \in \mf{m} \subset  k[x_,\dots, x_N]$, 
$$
\frac{N}{mult_{\mf{m}}(f) }\,\, \geq \,\,FT_{\mf{m}}(f)\,\, \geq\,\, \frac{1}{mult_{\mf{m}}(f)}.
$$
\end{Prop}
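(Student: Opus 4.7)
Set $m = \mathrm{mult}_{\mathfrak{m}}(f)$. The strategy is to use the second formulation of the $F$-threshold given just before the proposition,
\[
FT_{\mathfrak{m}}(f) = \inf\Bigl\{\tfrac{a}{p^e} \;\big|\; f^a \in \mathfrak{m}^{[p^e]}\Bigr\},
\]
together with two elementary sandwiching inclusions of ordinary powers of $\mathfrak{m}$ inside Frobenius powers of $\mathfrak{m}$.

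\textbf{Lower bound $FT_{\mathfrak{m}}(f) \geq 1/m$.} First I would record that $\mathfrak{m}^{[p^e]} \subseteq \mathfrak{m}^{p^e}$, which is immediate since each generator $x_i^{p^e}$ of $\mathfrak{m}^{[p^e]}$ lies in $\mathfrak{m}^{p^e}$. Next, using that multiplicity is additive on products in a regular local ring, $\mathrm{mult}_{\mathfrak{m}}(f^a) = a m$. Thus if $f^a \in \mathfrak{m}^{[p^e]} \subseteq \mathfrak{m}^{p^e}$, then $a m \geq p^e$, i.e.\ $a/p^e \geq 1/m$. Taking the infimum over all such pairs $(a,p^e)$ yields the lower bound.

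\textbf{Upper bound $FT_{\mathfrak{m}}(f) \leq N/m$.} The key auxiliary inclusion here is
\[
\mathfrak{m}^{N(p^e-1)+1} \subseteq \mathfrak{m}^{[p^e]}.
\]
To see this, it suffices to check on monomial generators of $\mathfrak{m}^{N(p^e-1)+1}$: if $x_1^{a_1}\cdots x_N^{a_N}$ has $\sum a_i \geq N(p^e-1)+1$, then by pigeonhole some $a_i \geq p^e$, so the monomial lies in $\mathfrak{m}^{[p^e]}$. Now choose the smallest integer $a$ with $a m \geq N(p^e-1)+1$, namely $a = \lceil (N(p^e-1)+1)/m \rceil$. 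Since $f \in \mathfrak{m}^m$, we get $f^a \in \mathfrak{m}^{a m} \subseteq \mathfrak{m}^{N(p^e-1)+1} \subseteq \mathfrak{m}^{[p^e]}$, and
\[
\frac{a}{p^e} \;\leq\; \frac{N(p^e-1)+1}{m\,p^e} + \frac{1}{p^e} \;\leq\; \frac{N}{m} + \frac{1}{p^e}.
\]
Letting $e \to \infty$ gives $FT_{\mathfrak{m}}(f) \leq N/m$.

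\textbf{Where the work lies.} Neither direction is genuinely hard; the only non-tautological input is the pigeonhole inclusion $\mathfrak{m}^{N(p^e-1)+1} \subseteq \mathfrak{m}^{[p^e]}$ used for the upper bound, which is the standard trick relating ordinary and Frobenius powers of a maximal ideal generated by $N$ elements. Both bounds follow once one recognizes that the $F$-threshold is governed by how deep $f^a$ sits in powers of $\mathfrak{m}$, with ordinary powers bounding Frobenius powers on both sides up to the factor $N$.
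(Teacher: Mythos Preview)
Your proof is correct and follows essentially the same approach as the paper: both arguments sandwich $\mathfrak{m}^{[p^e]}$ between ordinary powers of $\mathfrak{m}$ (the paper uses the slightly coarser inclusion $\mathfrak{m}^{Np^e} \subset \mathfrak{m}^{[p^e]} \subset \mathfrak{m}^{p^e}$, while you use the sharp pigeonhole bound $\mathfrak{m}^{N(p^e-1)+1} \subset \mathfrak{m}^{[p^e]}$) and then read off the inequalities from the infimum formulation of $FT_{\mathfrak{m}}(f)$ and the order-of-vanishing interpretation of multiplicity. The only cosmetic difference is that the paper packages the multiplicity side via the formula $1/\mathrm{mult}_{\mathfrak{m}}(f) = \inf\{a/t : f^a \in \mathfrak{m}^t\}$, whereas you compute directly with $\mathrm{mult}_{\mathfrak{m}}(f^a) = am$; these are equivalent.
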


\begin{proof}
Since $\mf{m}$ is generated by $N$ elements, 
we have the inclusions
$$
\mf{m}^{Np^e} \subset \mf{m}^{[p^e]} \subset \mf{m}^{p^e}
$$
for all $e$. So we also obviously have inclusions of sets
$$
\Big\{\frac{a}{p^e} \in \mathbb Z\Big[\frac{1}{p}\Big] \,\, \ \big|\ f^a \in \mf{m}^{Np^e}\Big\} \subset 
\Big\{\frac{a}{p^e} \in \mathbb Z\Big[\frac{1}{p}\Big] \,\, \ \big|\ f^a \in \mf{m}^{[p^e]}\Big\} \subset
\Big\{\frac{a}{p^e} \in \mathbb Z\Big[\frac{1}{p}\Big] \,\, \ \big|\ f^a \in \mf{m}^{p^e}\Big\}.
$$
Taking the infimum, we have
$$
\inf \Big\{\frac{a}{p^e} \in \mathbb Z\Big[\frac{1}{p}\Big] \,\, \ \big|\ f^a \in \mf{m}^{Np^e}\Big\} \geq 
\inf \Big\{\frac{a}{p^e} \in \mathbb Z\Big[\frac{1}{p}\Big] \,\, \ \big|\ f^a \in \mf{m}^{[p^e]}\Big\} \geq 
\inf \Big\{\frac{a}{p^e} \in \mathbb Z\Big[\frac{1}{p}\Big] \,\, \ \big|\ f^a \in \mf{m}^{p^e}\Big\}.
$$
Since the infimum on the left can be interpreted as $N $ times $ \inf \Big\{\frac{a}{Np^e} \in \mathbb Z\Big[\frac{1}{p}\Big] \,\, \ \big|\ f^a \in \mf{m}^{Np^e}\Big\}$, 
the result is proved.
\end{proof}

\begin{Rmk}
A deeper inequality is proven in \cite[Prop 4.5]{TW}: $mult(f) \geq \frac{N^N}{(FT(f))^N}$. This in turn, is a ``characteristic $p$ analog" of a corresponding statement in characteristic zero about log canonical threshold \cite[Thm 1]{dEM}.
\end{Rmk}

\subsection{Computing $F$-thresholds.}

To get a feeling how to compute $F$-thresholds, we begin the computation of Example \ref{cuspide}, relegating the details to \cite{Her}.  First recall that $\F_p[x, y]$ is a free module over $\F_p[x^{p^e}, y^{p^e}]$ with basis $\{x^{a_1}y^{a_2}\}_{0\leq a_1,a_2\leq p^e-1}$. By definition,
 $$
 FT(f)=\sup\Big\{\frac{a}{p^e}\ \big|\ f^a=\sum r_A^{p^e} x^{a_1}y^{a_2} \hbox{has some coefficient } \,\, r_A^{p^e}\not\in (x^{p^e}, y^{p^e})\Big\}.
 $$ 
 
 For each $a$, we  expand using the binomial theorem to get
 $$
 (x^2+y^3)^a=\sum_{i=0}^a\binom{a}{i}x^{2i}y^{3(a-i)}.
 $$
 Note that none of the terms in this expression can cancel, since each has a unique bi-degree in $x$ and $y$, unless its corresponding binomial coefficient is zero.
 So, thinking over $\F[x^{p^e}, y^{p^e}]$, we see that 
$FT(f)\geq \frac{a}{p^e}$ if and only if there is an index  $i$ such that 
$$2i<p^e,\quad 3(a-i)<p^e\quad\hbox{and }\ \binom{a}{i}\not\equiv 0\pmod{p}.\qquad (*)$$

Note that if  $2a<p^e$, then the index $i=a$ fulfills the three conditions in $(*)$. This, in particular, implies that $FT(f)\geq \frac{1}{2}$, independent of $p$. If the characteristic is 2, it is easy to see immediately that $FT(f) = \frac{1}{2}$.

On the other hand, if $\frac{a}{p^e} \geq \frac{5}{6}$, then condition $(*)$ is never satisfied, so that $FT(f) \leq \frac{5}{6}$, independent of $p$. Indeed, in this case, 
either $2i \geq p^e$ or $3(a-i) \geq p^e$. For otherwise, we have both
$$
i \leq \frac{p^e-1}{2} \,\,\,\,\,\,\,\,\,\,\, {\rm and} \,\,\,\,\,\,\,\,\,\, \,\,\,\,\,(a-i) \leq \frac{p^e-1}{3},
$$
in which case, adding them, we have  that
$
a \leq \frac{p^e-1}{2} + \frac{p^e-1}{3}.
$ This implies that
$\frac{a}{p^e} \leq \frac{5}{6} - \frac{5}{6p^e}$, a contradiction. 

For the exact computation of the $F$-threshold, it remains to analyze the binomial coefficients in the critical  terms in which both $2i<p^e$ and $3(a-i)<p^e$. These are the terms indexed by  $i$ satisfying $a - \frac{p^e}{3} <  i  <\frac{p^e}{2}$. For this, it is crucial to understand the behavior of  binomial coefficients modulo $p$.  One of the main tools is the following theorem:

\begin{Thm}{\rm (}Lucas, \cite{Lu}{\rm ).}
Fix non-negative integers $m \geq n\in\N$ and a prime number $p$. Write $m$ and $n$ in their base $p$ expansions:  
$m=\sum_{j=0}^r m_jp^j$ and $n=\sum_{j=0}^r n_jp^j$.  Then, modulo $p$, 
$$\binom{m}{n} \equiv \binom{m_0}{n_0}\binom{m_1}{n_1} \cdots \binom{m_r}{n_r},
$$
where we interpret $\binom{a}{b}$ as zero if $ a < b$.
In particular, $\binom{m}{n}$ is non-zero mod $p$ if and only if  $m_j\geq n_j$ for all $j=1,\dots,r$.
\end{Thm}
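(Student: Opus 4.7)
The plan is to work in the polynomial ring $\F_p[x]$ and extract the identity from comparing coefficients in two different expansions of $(1+x)^m$. The starting point is the observation (already used in the paper in the guise of additivity of Frobenius) that every binomial coefficient $\binom{p}{k}$ with $1\leq k\leq p-1$ is divisible by $p$, so that in $\F_p[x]$ we have the fundamental congruence
\[
(1+x)^p \equiv 1+x^p \pmod{p}.
\]
Iterating this congruence $j$ times (equivalently, raising both sides to the $p^{j-1}$ power and using that Frobenius is a ring homomorphism on $\F_p[x]$) yields $(1+x)^{p^j}\equiv 1+x^{p^j}\pmod{p}$ for every $j\geq 0$.

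Next I would feed in the base-$p$ expansion of $m$. Writing $m=\sum_{j=0}^{r}m_j p^j$ and using the multiplicative property of exponentiation, we obtain the chain of congruences modulo $p$:
\[
(1+x)^m \;=\; \prod_{j=0}^r (1+x)^{m_j p^j} \;\equiv\; \prod_{j=0}^r (1+x^{p^j})^{m_j} \;\equiv\; \prod_{j=0}^r \sum_{k_j=0}^{m_j}\binom{m_j}{k_j} x^{k_j p^j}.
\]
Now I would compare the coefficient of $x^n$ on both sides. On the left it is plainly $\binom{m}{n}$. On the right it is the sum of $\prod_{j=0}^r \binom{m_j}{k_j}$ over all tuples $(k_0,\dots,k_r)$ with $0\leq k_j\leq m_j\leq p-1$ and $\sum_j k_j p^j = n$.

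The crux is the following observation: since each digit satisfies $0\leq k_j\leq p-1$, the equation $\sum_j k_j p^j = n$ forces $(k_0,\dots,k_r)$ to be the base-$p$ digits of $n$, by uniqueness of the base-$p$ representation. Hence exactly one tuple contributes, namely $k_j=n_j$, provided that $n_j\leq m_j$ for all $j$; otherwise the factor $\binom{m_j}{n_j}$ in the product vanishes (under the convention $\binom{a}{b}=0$ for $a<b$). Either way, the coefficient of $x^n$ on the right equals $\prod_{j=0}^r \binom{m_j}{n_j}$, which gives the desired congruence. The final clause about when $\binom{m}{n}\not\equiv 0\pmod{p}$ is then immediate: the product is nonzero modulo $p$ exactly when every factor is nonzero, i.e.\ when $m_j\geq n_j$ for all $j$.

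The only real subtlety is the uniqueness step, and this is not actually an obstacle: it is forced by the bound $k_j\leq m_j\leq p-1$, which comes for free from base-$p$ expansions. No deeper input is needed beyond the Freshman's Dream $(1+x)^p\equiv 1+x^p\pmod p$.
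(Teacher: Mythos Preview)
Your argument is correct and is in fact the standard generating-function proof of Lucas's theorem. However, note that the paper does not supply its own proof: the theorem is stated with a citation to Lucas \cite{Lu} and used as a black box in the computation of the $F$-threshold of $x^2+y^3$, so there is nothing in the paper to compare against. Your write-up would serve perfectly well as a self-contained proof to accompany the statement.
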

\end{ex}

Thus, to compute the $F$-threshold of $x^2+ y^3$, it is helpful to write $a$ in its base $p$ expansion, and then try to understand, using Lucas's theorem,  whether there exist values of $i$ in the critical range for which $\binom{a}{i}$ is not zero. For example, if $p \equiv 1 \mod 6$, then $5p^e = 5 \mod 6$, so the number $a = \frac{5p^e - 5}{6}$ is an integer.  With this choice of $a$, it is not hard to check that  the  conditions $(*)$ are satisfied for the index $i = \frac{p^e - 1}{2}$. This shows that when $p   \equiv 1 \mod 6,$ then the $F$-threshold of $x^2 + y^3$ is at least $\frac{a}{p^e} = \frac{5}{6} - \frac{5}{p^e}$ for all  $e$. It follows that for  $p   \equiv 1 \mod 6,$ the $F$-threshold of $x^2 + y^3$ is exactly $\frac{5}{6}$.
 The details, as well as the computation for other $p$,  are carried out in \cite[Example 4.3]{MTW} or \cite[Example 8.2]{Her}.

\subsection{Comparison of $F$-thresholds and log canonical thresholds.}
Once $F$-thresholds and log canonical thresholds are defined, it is natural to compare them when it is possible. This is the case when $f\in\Z[x_1,\ldots,x_n]$. We can view $f$ as a polynomial over $\C$, and compute its log canonical threshold. After reduction modulo $p$,  we can calculate the $F$-threshold of $f \mod p$ in $\F_p[x_1, \dots, x_n]$.

\vspace{0.3cm}

\noindent {\bf Question:} For which values of $p$ is $lct(f)=FT(f \mod p)$? What happens when $p\gg 0$?

\vspace{0.3cm}

The following theorem provides a partial answer:

\begin{Thm}\label{reduce}
Fix $f\in\Z[x_1,\dots,x_n]$. Then,
\begin{itemize}
\item $FT(f \mod p)\leq lct(f)$ for all $p \gg 0$ prime.
\item $\lim_{p\rightarrow\infty} FT(f \mod p)=lct(f)$.
\end{itemize}
\end{Thm}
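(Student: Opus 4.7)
The plan is to reinterpret both sides of the claim as critical exponents of two parallel families of ideals. On the characteristic zero side, Proposition~\ref{basic}(3) already reads
$$lct(f) \;=\; \sup\{\lambda \geq 0 : \mc{J}(f^\lambda) = R\}.$$
The characteristic $p$ counterpart is the \emph{test ideal} $\tau(f_p^\lambda)$, the $F$-singularity analog of the multiplier ideal (to be developed later in the paper), which satisfies
$$FT_{\mf{m}}(f_p) \;=\; \sup\{\lambda \geq 0 : \tau(f_p^\lambda) = R_p\},$$
where $f_p := f \bmod p$ and $R_p := \F_p[x_1, \ldots, x_n]$. The theorem then becomes a comparison of the critical exponents of two strikingly similar families of ideals.

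The bridge between them is the Hara--Yoshida and Takagi reduction-mod-$p$ correspondence: for every rational $\lambda \geq 0$ there is a prime $p_0 = p_0(\lambda, f)$ such that
$$\tau(f_p^\lambda) \;=\; \mc{J}(f^\lambda) \bmod p \qquad \text{for every prime } p \geq p_0(\lambda).$$
Establishing this correspondence is the main obstacle; I would invoke it as a deep, cited result. Its proof spreads a log resolution $\pi : X \to \A^N$ of $f$ out over $\Spec\Z[1/M]$, and compares the two ideals along the reductions $\pi_p$ for $p \nmid M$, using Grothendieck/Grauert-type cohomology vanishing together with the flatness of Frobenius on the regular ambient space.

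Granting the correspondence, both bullets follow cleanly. For the first, recall from Section~\ref{febrero} that $lct(f) \in \Q$, and since $lct(f)$ is a jumping number we have $\mc{J}(f^{lct(f)}) \subsetneq R$. Applying the correspondence at the rational value $\lambda = lct(f)$ gives $\tau(f_p^{lct(f)}) = \mc{J}(f^{lct(f)}) \bmod p \subsetneq R_p$ for every $p \geq p_0(lct(f))$, whence $FT(f_p) \leq lct(f)$ for all such $p$. For the second bullet, the first already yields $\limsup_{p \to \infty} FT(f_p) \leq lct(f)$. For the matching lower bound, pick any rational $\lambda < lct(f)$; then $\mc{J}(f^\lambda) = R$, so the correspondence gives $\tau(f_p^\lambda) = R_p$ for all $p \geq p_0(\lambda)$, and hence $FT(f_p) \geq \lambda$ for such $p$. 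Letting $\lambda$ run through a sequence of rationals increasing to $lct(f)$ forces $\liminf_{p \to \infty} FT(f_p) \geq lct(f)$, and the two bounds combine to give the claimed limit.
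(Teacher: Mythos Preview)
Your proposal is correct and matches the paper's own treatment: the paper does not give a self-contained proof of Theorem~\ref{reduce} but rather attributes it to the Hara--Yoshida correspondence \cite[Theorem 6.8]{HY} (building on \cite{Hara}, \cite{Sm00}, \cite{HW}), exactly the deep input you invoke, and later records that correspondence explicitly as the statement that $\tau(f_p^c)\subseteq \mc{J}(f^c)\otimes\F_p$ for all $c$ and $p\gg 0$, with equality for each fixed $c$ once $p$ is large enough. Your deduction of the two bullets from this correspondence---evaluating at $\lambda=lct(f)$ for the inequality and at rationals $\lambda\nearrow lct(f)$ for the limit---is the natural one and is precisely what the paper means by ``essentially following from'' Hara--Yoshida.
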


The proof of this theorem is the culmination mainly of the work of the Japanese school of tight closure, who generalized the theory of tight closure to the case of pairs. The first important step was the work of Hara and Watanabe in \cite{HW} Theorem 3.3,  with Theorem \ref{reduce} essentially following from  Theorem 6.8 in the paper \cite{HY} of Hara and Yoshida; the proof there in turn generalizes the proofs given in \cite{Hara} and \cite{Sm00} in the non-relative case to pairs.

\noindent{\bf Open Problem:} Are there infinitely many primes $p$ for which $FT(f\mod p)=lct(f)$?

A positive answer to this Question would settle a long standing conjecture in $F$-singularities: every log canonical pair $(X, D)$ where $X$ is smooth  (over $\mathbb C$) is of  ``$F$-pure type." Daniel Hernandez shows that this is the case for a ``very general"  polynomial $f$ in $\C[x_1, \dots, x_n]$  \cite{Her0}. 
Versions of this question have been around since the early eighties, for example, as early as Rich Fedder's thesis in 1983 \cite{Fed}, when similarities between Hochster and Robert's notion of ``F-purity" and rational singularities began to emerge. Watanabe pointed out that the log-canonicity ought to correspond to F-purity, circulating a preprint in the late eighties proving that ``F-pure implies log canonical"  for rings.  He did not publish this result for several years, when together with Hara, they introduced a notion of F-purity for pairs and proved the corresponding result for a pair $(X, D)$. The field of  ``F-singularities of pairs," including the F-pure threshold, developed rapidly in Japan, with numerous papers of Watanabe, Hara, Takagi, Yoshida, and others filling in the theory. The question in the form stated here may have first appeared in print in \cite{MTW}.

\vspace{0.2cm}

\subsection{Test ideals and $F$-thresholds}\label{lecture3}
We wish to construct a family of ideals of a polynomial $f \in \F_p[x_1, \dots, x_n] = R$, say $\tau(f^c)$, called test ideals,  which are analogous to the multiplier ideals.

We first restate the definition of $F$-threshold yet one more time, in a way that will make the definition of the test ideals very natural.
\begin{defi}
The $F$-threshold of $f \in \F_p[x_1, \dots, x_n]$ at ${\mf{m}}$ is 
$$
FT_{\mf{m}}(f) = \sup\Big\{c=\frac{a}{p^e} \in \mathbb Z\Big[\frac{1}{p}\Big] \,\,  \big|\ \exists \varphi\in\Hom_{R}(R^{1/p^e}, R), \ \varphi(f^{c}) \notin 
 \mf{m} \Big\}.$$
\end{defi}

To see that this is equivalent to the previous definitions, we apply the following simple lemma in the case where $J \ = \mf{m}$:

\begin{Lem}\label{simple} Consider $f \in  R = \F_p[x_1, \dots, x_n]$ and $c = \frac{a}{p^e}$. 
Fix any ideal $J \subset R$. 
Then  $f^c \notin J R^{\frac{1}{p^e}}$ if and only if there is an $R$-linear map $R^{\frac{1}{p^e}}\overset{\phi}\longrightarrow R$ sending $f^c$ to an element not in $J$. 
\end{Lem}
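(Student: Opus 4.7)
The plan is to leverage the freeness of $R^{1/p^e}$ over $R$, established earlier in the excerpt, which supplies both the basis needed to exhibit a separating homomorphism and the control needed to see that $R$-linear maps carry $JR^{1/p^e}$ into $J$.

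First I would fix the free basis $\{\mathbf{x}^{A/p^e}\}_{0\le a_j \le p^e-1}$ of $R^{1/p^e}$ over $R$ and expand
\[
f^{c} \;=\; \sum_{A} r_A\, \mathbf{x}^{A/p^e}, \qquad r_A \in R,
\]
as in equation (\ref{FTex}). By freeness, $f^c \in JR^{1/p^e}$ if and only if every coefficient $r_A$ lies in $J$; equivalently, $f^c \notin JR^{1/p^e}$ if and only if some $r_{A_0} \notin J$.

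For the ``only if'' direction, assume some $r_{A_0} \notin J$. Let $\phi_{A_0} \colon R^{1/p^e} \to R$ be the $R$-linear projection onto the $A_0$-th coordinate with respect to this basis, so that $\phi_{A_0}(\mathbf{x}^{A/p^e}) = \delta_{A,A_0}$. Then $\phi_{A_0}(f^c) = r_{A_0} \notin J$, producing the desired map. For the ``if'' direction, suppose $\phi \colon R^{1/p^e} \to R$ is $R$-linear with $\phi(f^c) \notin J$. If $f^c$ were in $JR^{1/p^e}$, then by $R$-linearity $\phi(f^c)$ would lie in $\phi(JR^{1/p^e}) = J\,\phi(R^{1/p^e}) \subseteq J$, a contradiction; hence $f^c \notin JR^{1/p^e}$.

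There is no real obstacle here: the content is entirely the freeness of $R^{1/p^e}$ over $R$ (Kunz's theorem in the regular case, already spelled out explicitly for polynomial rings in the excerpt). The only minor care needed is to observe that $R$-linear maps automatically respect extension of ideals, which is immediate from $R$-linearity.
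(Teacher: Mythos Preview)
Your proof is correct and follows essentially the same approach as the paper: both use the free basis expansion (\ref{FTex}) to identify a coefficient $r_{A_0}\notin J$ and take $\phi$ to be the coordinate projection for one direction, and both invoke $R$-linearity to force $\phi(f^c)\in J$ whenever $f^c\in JR^{1/p^e}$ for the converse. Your version is slightly more explicit in spelling out $\phi(JR^{1/p^e})=J\,\phi(R^{1/p^e})\subseteq J$, but there is no substantive difference.
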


\begin{proof}
Suppose that  $f^c \notin J R^{\frac{1}{p^e}}$. Then in writing $f^c$ uniquely in some basis for $R^{\frac{1}{p^e}}$ as in expression (\ref{FTex}), there is some coefficient $r_A \notin J$. If we let $\phi$ be the projection onto this direct summand, we have a map $\phi$ satisfying the required conditions. Conversely,  if $f^c \in J R^{\frac{1}{p^e}}$, then the $R$-linearity of $\phi$ forces   $\phi(f^c) \in  J$ for any $\phi \in \Hom_{R}(R^{1/p^e}, R)$. So every such  $\phi$ must  send $f^c$ to an element  in $J$.
\end{proof}

With this definition of $F$-threshold in mind, it is quite natural to define test ideals, at least for certain $c$.
First note that for each $f \in \F_p[x_1, \dots, x_n]$ and  each $c\in\Z[\frac{1}{p}]$,
we have a natural $R$-module map:
$$\xymatrix@R=0pc@C=1pc{
\Hom(R^{1/p^e},R)\ar[r]& R \\
\ \ \ \phi\ar@{|->}[r]& \phi(f^c),
}$$
where $e$ is chosen so that $c = \frac{a}{p^e}$ for some natural number $a$.
The test ideal is the image of this map:
\begin{defi}\label{test} Let $f \in \F_p[x_1, \dots, x_n]$ and  $c =\frac{a}{p^e}\in\Z[\frac{1}{p}]$.
The \emph{test ideal} $\tau(f^c)$ is the ideal 
$$\tau(f^c)=\hbox{im}[\Hom(R^{1/p^e},R)\lra R,\ \hbox{defined by evaluation at $f^c$} ].$$
\end{defi}

In practical terms, if we write $f^c = f^{a/p^e}$ uniquely as in Expression (\ref{FTex}), then 
$\tau(f^c)$ is generated by the coefficients $r_A$ which appear in this expression. Note that this 
 is independent of the way we write $c$. Indeed, if we instead think of $c$ as $\frac{ap}{p^{e+1}}$, 
then the expression for $f^c$ becomes
$$
f^{c} = \sum_A r_A x^{A/p^e} = \sum r_A x^{pA/p^{e+1}}
$$
which is a valid expression for $f^c$ as an element of the free $R$-module $R^{1/p^{e+1}}$ since the monomials $ x^{pA/p^{e+1}}
$ are still part of free basis for $R^{1/p^{e+1}}$.

\begin{Rmk}\label{simplecor}
The test ideal $\tau(f^{a/p^e})$ is the {\it smallest\/} ideal $J$ such that  $f^{a/p^e} \in JR^{1/p^e}$.
Indeed,  Lemma \ref{simple} can be reinterpreted as saying that if there is some ideal $J$ of $R$ such that $f^{a/p^e} \in JR^{1/p^e}$, then $\tau(f^{a/p^e}) \subset J.$

 \end{Rmk}

\medskip
Although we have not yet  defined  test ideals with arbitrary real exponents $c$, let us pause and see whether, at least for   $c \in \mathbb Z[1/p]$, we have a collection of ideals 
$\{\tau(f^c)\}$  satisfying the desired basic properties analogous to Proposition \ref{basic}.
That is, we want
\begin{enumerate}
\item $\tau(f^{c}) $ is the unit ideal for sufficiently small positive $c$;
    \item If $c > c'$, then $ \tau(f^{c'}) \supseteq  \tau(f^{c});$
     \item  The $F$-threshold of $f$ is 
 $$FT_{\mf{m}}(f)=\sup \{ c\ | \ \tau(f^c)=(1) \}.$$
      \item  If $\varepsilon >0$ small enough, then $\tau(f^c)= \tau(f^{c+\varepsilon})$.   
  \item  There exist certain $c$ such that $\tau(f^{c - \varepsilon})\supsetneq \tau(f^c)$ for all positive $\varepsilon$.
\end{enumerate}

The first property is easy. Indeed,  for fixed $a$ consider  $f^{a/p^e} \in R^{1/p^e}$ as $e$ gets very large.  If  $f^{a/p^e} \in \mf{m}R^{1/p^e},$  then  $f^a \in m^{[p^e]} \subset m^{p^e}$ in $R$. But this can not be the case  for all $e$. The third property follows immediately from the second, which is also 
quite straightforward. Note that since the test ideal $\tau(f^c)$ is independent of the way we represent $c$ as a quotient of two integers whose denominator is a power of $p$, we may assume that $c = \frac{a}{p^e}$ and $c' = \frac{b}{p^e}$ have a common denominator. Then:

\begin{Lem}\label{vouyavess}
For $\frac{a}{p^e} \geq \frac{b}{p^e}$, we have
$$ \tau( f^{a/ p^e}) \subseteq \tau(f^{b/ p^e}).$$
\end{Lem}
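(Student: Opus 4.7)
The plan is to work directly from Definition \ref{test}, viewing each test ideal as the image of an evaluation map, and to exhibit a factorization on the $\Hom$-side that shows the image for the larger exponent $a/p^e$ lands inside the image for the smaller exponent $b/p^e$. The crucial observation is that since $a \geq b$, the fractional power $f^{(a-b)/p^e} = (f^{1/p^e})^{a-b}$ is a bona fide element of $R^{1/p^e}$, and so multiplication by it defines an $R$-linear endomorphism of $R^{1/p^e}$.

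First I would take an arbitrary generator of $\tau(f^{a/p^e})$, i.e.\ an element of the form $\phi(f^{a/p^e})$ for some $\phi\in \Hom_R(R^{1/p^e},R)$. I would then define
\[
 \phi' : R^{1/p^e} \lra R, \qquad \phi'(x) = \phi\bigl(f^{(a-b)/p^e}\cdot x\bigr).
\]
Because multiplication by $f^{(a-b)/p^e}$ is $R$-linear on $R^{1/p^e}$ and $\phi$ is $R$-linear, the composition $\phi'$ lies in $\Hom_R(R^{1/p^e},R)$. Evaluating at $f^{b/p^e}$ gives
\[
 \phi'(f^{b/p^e}) = \phi\bigl(f^{(a-b)/p^e}\cdot f^{b/p^e}\bigr) = \phi(f^{a/p^e}),
\]
so every generator of $\tau(f^{a/p^e})$ belongs to $\tau(f^{b/p^e})$, finishing the proof.

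Alternatively, one could run this through Remark \ref{simplecor}: put $J = \tau(f^{b/p^e})$, so that $f^{b/p^e}\in J R^{1/p^e}$ by definition; multiplying by $f^{(a-b)/p^e}\in R^{1/p^e}$ (and using that $JR^{1/p^e}$ is an ideal of $R^{1/p^e}$) yields $f^{a/p^e}\in JR^{1/p^e}$, and the minimality characterization of $\tau(f^{a/p^e})$ then gives $\tau(f^{a/p^e})\subseteq J$.

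There is no real obstacle here; the only point that needs a moment's care is the consistency of the construction with respect to rewriting $c=a/p^e$ with a larger denominator, but this is already built into the discussion immediately following Definition \ref{test} and so does not need to be re-verified. The proof is essentially a one-line manipulation of the evaluation maps defining the test ideals.
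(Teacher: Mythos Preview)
Your proof is correct and is essentially identical to the paper's: both precompose a given $\phi\in\Hom_R(R^{1/p^e},R)$ with the $R$-linear multiplication-by-$f^{(a-b)/p^e}$ map on $R^{1/p^e}$ to exhibit any element $\phi(f^{a/p^e})$ of $\tau(f^{a/p^e})$ as $\phi'(f^{b/p^e})\in\tau(f^{b/p^e})$. Your alternative via Remark~\ref{simplecor} is a harmless repackaging of the same idea.
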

\begin{proof}
 Suppose that $s \in \tau(f^{a/p^e})$. This means there is an $R$-linear map
 $\psi:R^{1/p^{e}}\lra R$ so that $\psi(f^{a/p^e})=s$. Precomposing this with the $R$-linear map 
 $R^{1/p^e} \overset{\mu}\longrightarrow R^{1/p^e}$ given by multiplication by $f^{(a-b)/p^e}$, we have an $R$-linear map
$$\xymatrix@R=0pc@C=2pc{
 R^{1/p^e} \ar[r]^{\mu} & R^{1/p^e}\ar[r]^{\psi} & R\\
 f^{b/p^e} \ar@{|->}[r]& f^{(a-b)/p^e} f^{b/p^e}\ar@{|->}[r] & \varphi(f^{a/p^e}) = s,
}$$
showing that $s \in \tau(f^{b/p^e})$ as well.
\end{proof}

The fourth property is also quite simple to prove: 

\begin{Lem} \label{4} Fix $c = \frac{a}{p^e}$. Then for all $n $ sufficiently large (how large could depend on $c$), 
$$
\tau(f^c) = \tau(f^{c+\frac{1}{p^n}}).
$$
\end{Lem}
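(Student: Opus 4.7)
The plan is to invoke Remark \ref{simplecor}, which identifies $\tau(f^c)$ with the smallest ideal $J \subseteq R$ satisfying $f^c \in J R^{1/p^e}$, and analogously $\tau(f^{c+1/p^n})$ with the smallest $J$ satisfying $f^{c+1/p^n} \in J R^{1/p^n}$. To prove the lemma it therefore suffices, for $n$ large enough, to show that these two ``smallest'' ideals coincide. The inclusion $\tau(f^{c+1/p^n}) \subseteq \tau(f^c)$ is immediate from Lemma \ref{vouyavess}, since writing both exponents over the common denominator $p^n$ gives $c + 1/p^n = (ap^{n-e}+1)/p^n \geq ap^{n-e}/p^n = c$.

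For the reverse inclusion, I would exploit the fact that $R^{1/p^n}$ is a free $R^{1/p^e}$-module with basis $\{x^{B/p^n}\}_{0 \leq b_i < p^{n-e}}$. Writing $f = \sum_I c_I x^I$ with $c_I \in \F_p$ and setting $y_i = x_i^{1/p^e}$, one expands $f^{1/p^n} = \sum_I c_I x^{I/p^n}$ and groups terms by the residue of $I$ modulo $p^{n-e}$ to obtain a unique expression $f^{1/p^n} = \sum_B g_B(y)\, x^{B/p^n}$ in the $R^{1/p^e}$-basis, with each $g_B(y) \in R^{1/p^e}$. The crucial observation is that once $p^{n-e}$ strictly exceeds every exponent appearing in $f$, this expansion collapses: for each multi-index $B$ in the support of $f$, the only $I$ contributing is $I = B$ itself, which forces $g_B = c_B \in \F_p \subseteq R$, and $g_B = 0$ otherwise.

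With the expansion at hand, fix a monomial $x^B$ appearing in $f$, so that $c_B \in \F_p$ is a unit of $R$, and let $\pi_B \colon R^{1/p^n} \to R^{1/p^e}$ be the $R^{1/p^e}$-linear (hence $R$-linear) projection onto the $x^{B/p^n}$-summand. Multiplying the expansion of $f^{1/p^n}$ by $f^c \in R^{1/p^e}$ gives $f^{c+1/p^n} = \sum_{B'} c_{B'} f^c\, x^{B'/p^n}$, so $\pi_B(f^{c+1/p^n}) = c_B f^c$. Consequently, for any ideal $J$ with $f^{c+1/p^n} \in J R^{1/p^n}$, applying the $R$-linear map $\pi_B$ produces $c_B f^c \in J R^{1/p^e}$, and since $c_B$ is a unit we deduce $f^c \in J R^{1/p^e}$. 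Specializing to $J = \tau(f^{c+1/p^n})$ and invoking Remark \ref{simplecor} yields $\tau(f^c) \subseteq \tau(f^{c+1/p^n})$, completing the argument.

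The main obstacle is pinning down the correct threshold on $n$: the entire approach relies on the expansion of $f^{1/p^n}$ reducing to $\F_p$-scalars, which only happens once $p^{n-e}$ strictly exceeds the maximum exponent appearing in $f$. Below that threshold the coefficients $g_B$ are genuine polynomials in the $y_i$; since they do not lie in $R$, they cannot be pulled through the $R$-linear map $\phi$ from Definition \ref{test}, and the projection trick fails to deliver the clean unit multiple of $f^c$ that drives the argument.
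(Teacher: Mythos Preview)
Your proof is correct and follows essentially the same approach as the paper. Both arguments hinge on constructing, for $n$ large, an $R^{1/p^e}$-linear map $R^{1/p^n}\to R^{1/p^e}$ sending $f^{1/p^n}$ to a unit; the paper phrases this as ``$f^{1/p^{n-e}}$ is part of a free basis for $R^{1/p^{n-e}}$'' and projects onto that basis element, while you make the same construction explicit by projecting onto a monomial summand $x^{B/p^n}$ once $p^{n-e}$ exceeds the degree of $f$. The only cosmetic difference is that the paper composes this map directly with a given $\phi\in\Hom_R(R^{1/p^e},R)$ to chase an element $s$, whereas you package the same step through Remark~\ref{simplecor}; the content is identical.
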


\begin{proof} 
Fix $c = \frac{a}{p^e}$ and let $s\in
\tau(f^c). $ This means that there exists an $R$-linear map 
$$R^{1/p^e} \overset{\phi}\rightarrow R \,\,\,\,\,\,\,\,\,\, s.t. \,\,\,\,\,\,f^{a/p^e} \mapsto s.$$ Take $n$ sufficiently large so that $f^{1/p^{n-e}}$ is part of a free basis for $R^{1/p^{n-e}}$. 
 Projection onto the submodule spanned by $f^{1/p^{n-e}}$ is  an $R$-linear map 
 $R^{1/p^{n-e}}  \rightarrow R$    sending  $f^{1/p^{n-e}}$ to  $1.$ Taking the $p^e$-th roots, we have  an $R^{1/p^{e}}$-linear map 
  $$R^{1/p^{n}}  \overset{\psi}\rightarrow R^{1/p^e} \,\,\,\,\,\,s.t. \,\,\,\,\,\, f^{1/p^n} \mapsto 1.$$
   In particular 
$\psi(f^{\frac{a}{p^e} + \frac{1}{p^n}})  = f^{\frac{a}{p^e}}.$
 Composing, we get an   $R$-linear map 
$$\xymatrix@R=0pc@C=2pc{
 R^{1/p^n} \ar[r]^{\psi} & R^{1/p^e}\ar[r]^{\phi} & R\\
 f^{a/p^e + 1/p^n} \ar@{|->}[r]& f^{a/p^e} \ar@{|->}[r] & \varphi(f^{a/p^e}) = s.
}$$
This shows that $s \in \tau(f^{c+\frac{1}{p^n}}),$ as desired.
\end{proof}

\medskip The fifth property, however, is {\it not true}  if we restrict attention to $c$ that are rational numbers whose denominator is a power of $p$. To get property (5), we need to define 
define test ideals also for arbitrary positive real numbers $c; $  if we can do so in such a way that the first four properties are satisfied for all real numbers, then the
 completeness property of the real numbers will automatically grant (5).

Lemma \ref{vouyavess} encourages us to define $\tau(f^c)$ for any positive real $c$  by approximating $c$  by a sequence of numbers  $\{c_n\}_{n \in \N}  \in \Z[\frac{1}{p}]$ converging to $c$ {\it from above\/} and taking advantage of the Noetherian property of the ring. That
 is, we take any monotone decreasing sequence of numbers in $\Z[\frac{1}{p}]$
$$ 
c_1 > c_2 > c_3 \dots
$$
 converging to $c$.
 There is a corresponding increasing sequence of test ideals:
$$
\tau(f^{c_1}) \subseteq \tau( f^{c_2}) \subseteq  \tau( f^{c_3}) \cdots .$$
Because $R$ is Noetherian, this chain of ideals must stablize.  
Since any other strictly  decreasing sequence converging to $c$ is cofinal with this one (meaning that, if $\{c_i'\}$ is some other sequence, then  for all $i$, there exists $j$ such that $c_i > c_j'$ and vice versa), it is easy to check that the stable ideal is independent of the choice of 
approximating sequence.
So we have the following definition:

\begin{defi}\label{testideal}
For any $c \in \R_+$ the \emph{test ideal} is defined as
$$ \tau (f^c):= \bigcup_{n \geq 0} \tau(f^{c_n}),$$
where $\{c_n\}_{n \in \N}$ is any decreasing sequence of rational numbers in $\Z[1/p]$ approaching $c$. In particular, 
$$ \tau (f^c):= \bigcup_{n \geq 0} \tau(f^{\lceil cp^n \rceil / p^n}),$$
or equivalently as  $\tau(f^{\lceil cp^n \rceil / p^n})$ for $n \gg 0$. \end{defi}

There is one slight ambiguity to address: If the real number $c$ {\it happens} to be a rational number whose denominator is a power of $p$, then we have already defined $\tau(f^c)$ in Definition \ref{test}. Do the two definitions produce the same ideal in this case? That is, we 
 need to check that, if we had instead approximated $c$ by a sequence $\{c_n\}_{n\in \N} \in \Z[1/p]$ converging to $a/p^e$ from above,  then the ideals $\tau(f^{c_n}) $ stabilize to $\tau(f^{a/p^e})$.
 But this is essentially the content of Lemma \ref{4}. So test ideals are well-defined for any positive real number.

\medskip
As before with multiplier ideals, the following  follows easily from the definition:
   \begin{Prop} Fix a polynomial $f$ and view its test ideals $ \tau(f^c)$ as a family of ideals varying with a positive real parameter $c$. Then the following properties hold:
   \begin{enumerate}
    \item For $c \in \R_+$ sufficiently small, $ \tau(f^c)$  is the unit ideal. 
    \item If $c > c'$, then $ \tau(f^{c}) \subseteq  \tau(f^{c'}).$

   \item  The $F$-threshold of $f$ is 
 $$FT(f)=\sup \{ c\ | \ \tau(f^c)=(1) \}.$$
  \item For each fixed $c$, we have  $\tau(f^c)= \tau(f^{c+\varepsilon})$ for sufficiently small  positive $\varepsilon $ (how small is small enough depends on $c$).   
 \item  There exist certain $c\in \R_+$ such that $\tau(f^{c - \varepsilon})\supsetneq \tau(f^c)$ for all positive $\varepsilon$.
\end{enumerate}
\end{Prop}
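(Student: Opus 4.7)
The plan is to extend the already-established properties for $c \in \mathbb{Z}[1/p]$ (Lemmas \ref{vouyavess} and \ref{4}, together with the direct arguments preceding the Proposition) to arbitrary positive real $c$, by exploiting the stabilization definition $\tau(f^c) = \tau(f^{\lceil cp^n\rceil/p^n})$ for $n \gg 0$. Four of the five properties will be essentially transported from $\mathbb{Z}[1/p]$; property (5) will then fall out cleanly from (3) and (4).

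For monotonicity (2), I would write $c > c'$ with common denominator $p^n$ for $n$ large enough that both $\tau(f^c)$ and $\tau(f^{c'})$ are already stable, then apply Lemma \ref{vouyavess} to $\lceil cp^n\rceil \geq \lceil c'p^n\rceil$. For (1), fix $a \geq 1$; since $\bigcap_e \mf{m}^{p^e} = (0)$ in $R_{\mf{m}}$ (or since $f^a$ has finite $\mf{m}$-order in $R$), we get $f^a \notin \mf{m}^{p^e} \supseteq \mf{m}^{[p^e]}$ for $e$ large, so $f^{a/p^e} \notin \mf{m} R^{1/p^e}$, and Remark \ref{simplecor} forces $\tau(f^{a/p^e}) = R$; as $a/p^e$ can be taken arbitrarily small, monotonicity (2) propagates this to every sufficiently small $c > 0$. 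For right-continuity (4), the case $c \notin \mathbb{Z}[1/p]$ is trivial because for fixed large $n$ and sufficiently small $\varepsilon > 0$ we have $\lceil(c+\varepsilon)p^n\rceil = \lceil cp^n\rceil$ and the two test ideals are literally the same; the case $c \in \mathbb{Z}[1/p]$ reduces $\tau(f^{c+\varepsilon})$ to $\tau(f^{c + 1/p^n})$, which equals $\tau(f^c)$ by Lemma \ref{4}. Property (3) is then immediate: for $c < FT(f)$, pick rational $c' \in (c, FT(f)) \cap \mathbb{Z}[1/p]$ to get $\tau(f^{c'}) = R$ and apply (2); for $c > FT(f)$, a rational sequence $c_n \searrow c$ all exceeding $FT(f)$ gives $f^{c_n} \in \mf{m} R^{1/p^{e_n}}$, so $\tau(f^{c_n}) \subseteq \mf{m}$ by Remark \ref{simplecor}, and the stable union inherits this.

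The one step I expect to require conceptual care, though not technical obstacle, is property (5): I claim $c = FT(f)$ itself is always a jumping number. By (3) we have $\tau(f^{FT(f)-\varepsilon}) = R$ for every $\varepsilon > 0$. For the other direction, apply (4) at $c = FT(f)$: for some small $\delta > 0$, $\tau(f^{FT(f)}) = \tau(f^{FT(f)+\delta})$, and the latter is contained in $\mf{m}$ by the converse half of (3). Hence $\tau(f^{FT(f)}) \subsetneq R$, so $\tau(f^{FT(f)-\varepsilon}) \supsetneq \tau(f^{FT(f)})$ for every $\varepsilon > 0$, producing the required jumping number. The crucial ingredient is right-continuity (4) — it is exactly what guarantees that the supremum in (3) is \emph{not} achieved, which is what makes $FT(f)$ a genuine jump rather than just a limiting value.
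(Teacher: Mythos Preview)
Your proposal is correct and follows essentially the same route as the paper. The paper does not actually write out a proof of this Proposition---it says the statement ``follows easily from the definition,'' having already established (1)--(4) for $c\in\Z[1/p]$ via direct argument and Lemmas \ref{vouyavess} and \ref{4}, and having remarked in advance that once (1)--(4) are known for all real $c$, ``the completeness property of the real numbers will automatically grant (5).'' Your write-up is exactly the fleshing-out of that sketch: transport (1)--(4) through the stabilization $\tau(f^c)=\tau(f^{\lceil cp^n\rceil/p^n})$ for $n\gg 0$, then extract a jumping number. The only mild difference is stylistic: where the paper gestures at completeness for (5), you pin down the specific jump at $c=FT(f)$ using (3) and (4) together---but that is precisely how the completeness argument cashes out, so the two are the same in substance.
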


The proposition is summarized by the following diagram of the $c$-axis, which shows intervals where the test ideal remains constant:
\setlength{\unitlength}{0.9mm}%%%% Para decidir unidades!
\begin{center}
\begin{picture}(120,23)
%\allinethickness{1pt}
\put(0,15){\makebox(0,0){$[$}}
\put(0,15){\line(1,0){64}}
\put(68,15){\makebox(0,0){$\dots$}}
\put(71,15){\vector(1,0){49}}
\put(130,15){\makebox(0,0){{\small $c$-axis}}}

\put(26,15){\makebox(0,0){$)$}}
\put(26.8,15){\makebox(0,0){$[$}}
\put(26.8,10.5){\makebox(0,0){\small $c_1$}}
\put(13.2,18){\makebox(0,0){\small${\tau(f^{c})=(1)}$}}

\put(55,15){\makebox(0,0){$)$}}
\put(55.8,15){\makebox(0,0){$[$}}
\put(55.8,10.5){\makebox(0,0){\small $c_2$}}
\put(42,18){\makebox(0,0){\small $\tau\subsetneq(1)$}}

\put(85,15){\makebox(0,0){$)$}}
\put(85.8,15){\makebox(0,0){$[$}}
\put(85.8,10.5){\makebox(0,0){\small $c_n$}}

\put(26.8,2){\vector(0,1){7}}
\put(26.8,0){\makebox(0,0){\small $F$-threshold}}

%\put(60,0){\makebox(0,0){$\J(f^{c_1})\supsetneq\J(f^{c_2})\supsetneq\dots\supsetneq\J(f^{c_n})\supsetneq \dots$}}
\end{picture}
\end{center}

This leads naturally to the $F$-jumping numbers:

\begin{defi}
The \emph{$F$-jumping numbers} of $f$ are the real numbers $c_i \in \R_+$ for which $\tau(f^{c_i})\neq\tau(f^{c_i-\varepsilon})$, for every $\varepsilon>0$.
\end{defi}

It is not hard to see that test ideals and F-jumping numbers enjoy many of the same properties as do the multiplier ideals and jumping numbers defined in characteristic zero. First we have an analog of the Brian\c con-Skoda theorem.{\footnote{Starting in \cite{Laz}, the name of this theorem, 
which belongs to a collection of inter-related theorems comparing powers of an ideal to its integral closure, has been sometimes shortened to ``Skoda's theorem." We follow here the tradition in commutative algebra to include Brian\c con's name.}}

\begin{Prop}{\rm(}\cite[Proposition 2.25]{BMS1}{\rm).}\label{BS2}
Let $f$ be a polynomial in $\F_p[x_1, \dots, x_n]$. Then for every $c \in \R_+$, we have 
$$\tau(f^{c+1})=(f) \cdot \tau(f^c).$$ 
In particular, a positive real number $c$ is an $F$-jumping number if and only if $c + 1$ is an $F$-jumping number.
\end{Prop}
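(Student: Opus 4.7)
The plan is to reduce to the case $c \in \Z[\tfrac{1}{p}]$, where the statement becomes a transparent consequence of $R$-linearity, and then bootstrap to real $c$ via the approximation in Definition \ref{testideal}.

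First I would fix $c = a/p^e \in \Z[\tfrac{1}{p}]$ and unwind the definitions. Since $c + 1 = (a + p^e)/p^e$, we have $f^{c+1} = f \cdot f^{c}$ as elements of $R^{1/p^e}$, where the factor $f \in R$ sits comfortably inside the polynomial overring. By Definition \ref{test}, $\tau(f^{c+1})$ is the image of the $R$-module map $\Hom_R(R^{1/p^e}, R) \to R$ given by evaluation at $f^{c+1}$. For any such $\varphi$, $R$-linearity yields $\varphi(f^{c+1}) = \varphi(f \cdot f^c) = f \cdot \varphi(f^c)$, so the image of evaluation at $f^{c+1}$ is precisely $f$ times the image of evaluation at $f^c$; that is, $\tau(f^{c+1}) = f \cdot \tau(f^c)$.

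Next I would extend to an arbitrary positive real $c$ by approximation. Choose a strictly decreasing sequence $\{c_n\} \subset \Z[\tfrac{1}{p}]$ with $c_n \searrow c$; then $\{c_n + 1\} \subset \Z[\tfrac{1}{p}]$ is a decreasing sequence converging to $c+1$ from above. By Definition \ref{testideal}, for $n \gg 0$ we have $\tau(f^c) = \tau(f^{c_n})$ and $\tau(f^{c+1}) = \tau(f^{c_n + 1})$. Applying the previous paragraph for each $c_n \in \Z[\tfrac{1}{p}]$ gives $\tau(f^{c_n + 1}) = f \cdot \tau(f^{c_n})$, and assembling these equalities yields $\tau(f^{c+1}) = f \cdot \tau(f^c)$.

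For the second assertion, I would use that $f$ is a nonzero element of the integral domain $\F_p[x_1,\ldots,x_n]$, hence a non-zero-divisor, so multiplication by $f$ is an injective map on ideals: $fI = fJ$ forces $I = J$. Thus for any $\varepsilon > 0$, $\tau(f^{c+1}) \neq \tau(f^{c+1-\varepsilon})$ if and only if $f\cdot\tau(f^c) \neq f\cdot\tau(f^{c-\varepsilon})$, i.e., if and only if $\tau(f^c) \neq \tau(f^{c-\varepsilon})$. So $c$ is an $F$-jumping number precisely when $c+1$ is. I don't anticipate any real obstacle here: the whole argument rests on $R$-linearity of elements of $\Hom_R(R^{1/p^e}, R)$ in the $\Z[\tfrac{1}{p}]$ case, and the only subtlety is making sure the approximation limits line up, which is handled directly by Definition \ref{testideal} and Lemma \ref{4}.
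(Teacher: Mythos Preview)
Your proof is correct and follows essentially the same approach as the paper: reduce to $c \in \Z[\tfrac{1}{p}]$ via the approximation defining test ideals, then use that $\varphi(f \cdot f^{a/p^e}) = f\,\varphi(f^{a/p^e})$ by $R$-linearity. You have actually spelled out the approximation step and the ``in particular'' clause more carefully than the paper does.
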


\begin{proof}
Without loss of generality,  we may replace both $c$ and $c+1$ by  rational  numbers in  $\Z[\frac{1}{p}]$ approximating each from above. Thus we may assume $c =  a/p^e$ for some $a, e \in \N$.

For any $R$-linear map $R^{1/p^e}\overset{\varphi}\lra R$, it is clear that 
$$\varphi(f^{(a/p^e)+1}) = f \varphi(f^{(a/p^e)}),$$ 
since $f \in R$. It immediately follows that $\tau(f^{c+1})=(f) \cdot \tau(f^c).$
\end{proof}

Like the jumping numbers in characteristic zero, the $F$-jumping numbers are discrete and rational. Interestingly, the proof of the characteristic zero statement follows trivially from the (algebro-geometric) definition of multiplier ideals, while the characteristic $p$ proof took some time to find.

\begin{Thm}{\rm(}\cite[Theorem 3.1]{BMS1}{\rm).}\label{rat} The $F$-jumping numbers of a polynomial  are discrete and rational.
\end{Thm}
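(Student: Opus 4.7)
Reduce first to the interval $(0,1]$ using Proposition \ref{BS2}: the translation $c\mapsto c+1$ preserves the set of $F$-jumping numbers, so discreteness and rationality globally will follow from the corresponding statements for $F$-jumping numbers in $(0,1]$. The central technical ingredient I would use is the identification (implicit in Lemma \ref{simple} and Remark \ref{simplecor})
$$
\tau(f^{a/p^e})\;=\;\text{the smallest ideal } I \text{ of } R \text{ with }f^a\in I^{[p^e]},
$$
which turns ``multiply the exponent by $p$'' into ``take the Frobenius power of the test ideal.'' Combined with Proposition \ref{BS2}, this compatibility lets me define a self-map $\sigma$ on the set $S$ of $F$-jumping numbers in $(0,1]$, essentially $\sigma(c)=\{pc\}$ (the fractional part), with the boundary case $pc\in\Z$ absorbed by the periodicity.

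One half of discreteness is then straightforward: a strictly decreasing sequence $c_1>c_2>\cdots$ of $F$-jumping numbers in $(0,1]$ would produce an infinite strictly ascending chain $\tau(f^{c_1})\subsetneq\tau(f^{c_2})\subsetneq\cdots$ (apply the jumping property at $c_i$ with $\varepsilon=c_i-c_{i+1}$), contradicting Noetherianity of $R$. So $F$-jumping numbers cannot accumulate from above. Granted the finiteness of $S$, rationality then follows from a dynamical argument: since the orbit $c,\sigma(c),\sigma^2(c),\ldots$ lies in the finite set $S$, it must be eventually periodic, giving $\sigma^m(c)=\sigma^n(c)$ for some $m>n$, i.e.\ $(p^m-p^n)c\in\Z$, whence $c\in\Q$.

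The remaining step, and the main obstacle, is showing $|S|<\infty$---equivalently, ruling out accumulation of $F$-jumping numbers from below. Noetherianity alone does not suffice here: strictly descending chains of ideals in $R$ can be infinite (e.g.\ $(x)\supsetneq(x^2)\supsetneq\cdots$), and an accumulating sequence $c_n\nearrow c^*$ of $F$-jumping numbers produces exactly such a chain of test ideals. Handling this is the actual content of \cite[Theorem 3.1]{BMS1}: one combines the Frobenius compatibility above with a careful denominator bound, showing that each $c\in S$ has the form $a/(p^e-1)$ for suitable $e$ controlled by the $\sigma$-orbit of $c$. This simultaneously gives finiteness of $S$ and the explicit arithmetic description of its elements, at which point the argument of the previous paragraph upgrades finiteness and Noetherianity to global discreteness and rationality.
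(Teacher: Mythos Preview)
Your handling of rationality-given-discreteness via the map $\sigma(c)=\{pc\}$ is exactly the paper's argument (Lemma~\ref{dumpling} plus periodicity from Proposition~\ref{BS2}), and your use of the ascending chain condition to rule out accumulation of $F$-jumping numbers from above is fine. The gap is in your final paragraph: you correctly identify accumulation from below as the main obstacle, but you do not resolve it. Your proposed resolution is circular. You say one shows each $c\in S$ has the form $a/(p^e-1)$ with $e$ controlled by the $\sigma$-orbit of $c$, and that this ``simultaneously gives finiteness of $S$.'' But the only way you have offered to get that arithmetic form is eventual periodicity of the $\sigma$-orbit, and the only reason you gave for eventual periodicity is finiteness of $S$. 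Moreover, knowing merely that every element of $S$ lies in the countable set $\{a/(p^e-1)\}$ does not by itself give discreteness: that set is dense in $(0,1]$.

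The paper breaks this circle with an idea you do not mention: a \emph{degree bound} on the generators of test ideals. From the explicit description following Definition~\ref{test}, $\tau(f^{a/p^e})$ is generated by the coefficients $r_A$ in the expansion $f^a=\sum r_A^{p^e}\x^{A}$, and each $r_A$ has degree at most $\lfloor (a/p^e)\deg f\rfloor=\lfloor c\,d\rfloor$. Hence for $c$ in any bounded interval, all the test ideals $\tau(f^c)$ are generated inside a single finite-dimensional $k$-vector space $V$ of polynomials of bounded degree. A strictly decreasing chain $\tau(f^{\alpha_1})\supsetneq\tau(f^{\alpha_2})\supsetneq\cdots$ (arising from an increasing sequence of jumping numbers accumulating from below) then yields a strictly decreasing chain of subspaces of $V$, which is impossible. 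This is what rules out accumulation from below; only after discreteness is secured this way does the $\sigma$-orbit argument become available for rationality.
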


The proof takes advantage of an additional symmetry  the F-jumping numbers enjoy for  which there is no analog in characteristic zero:

\begin{Lem}{\rm(}\cite[Proposition 3.4]{BMS1}{\rm).}\label{dumpling}
Let $f$ be a polynomial in $\F_p[x_1, \dots, x_n]$. If $c$ is a jumping number for $f$, then also $pc$ is a jumping number for $f$.
\end{Lem}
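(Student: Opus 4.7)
The plan is to derive the lemma as a consequence of the identity
$$\tau(f^c)\;=\;\tau(f^{pc})^{[1/p]}, \qquad c\in\R_{\ge 0},$$
where for any ideal $I\subseteq R$ the symbol $I^{[1/p]}$ denotes the smallest ideal $J\subseteq R$ with $I\subseteq J^{[p]}$. The existence of such a minimum rests on Kunz's theorem: flatness of the Frobenius makes $J\mapsto J^{[p]}$ an order-embedding of the ideal poset of $R$ that respects finite sums, so that $I^{[1/p]}$ may in fact be computed as $\sum_i \tau(g_i^{1/p})$ for any generating set $(g_i)$ of $I$.

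Granted this identity, the lemma follows by contraposition. Suppose $pc$ is not an $F$-jumping number; then there exists $\delta>0$ with $\tau(f^{pc-\delta})=\tau(f^{pc})$. Applying the operation $(\cdot)^{[1/p]}$ (which need only be a set-theoretic function, not order-preserving) and invoking the identity twice yields
$$\tau(f^{c-\delta/p})\;=\;\tau\bigl(f^{p(c-\delta/p)}\bigr)^{[1/p]}\;=\;\tau(f^{pc-\delta})^{[1/p]}\;=\;\tau(f^{pc})^{[1/p]}\;=\;\tau(f^c),$$
contradicting that $c$ is an $F$-jumping number.

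To prove the identity, I would first take $c=a/p^e$ with $e\ge 1$ and use the smallest-ideal characterization of Remark \ref{simplecor}: $\tau(f^c)$ is the smallest $J$ with $f^{a/p^e}\in JR^{1/p^e}$. The heart of the argument is to show that the two conditions ``$f^c\in JR^{1/p^e}$'' and ``$J^{[p]}\supseteq\tau(f^{pc})$'' on $J$ are equivalent, so that the smallest $J$ coincides in each description. The forward direction is a computation: from any expression $f^c=\sum j_i s_i^{1/p^e}$ with $j_i\in J$, the characteristic-$p$ additivity of Frobenius gives
$$f^{pc}\;=\;\Bigl(\sum j_is_i^{1/p^e}\Bigr)^{\!p}\;=\;\sum j_i^p s_i^{1/p^{e-1}}\;\in\;J^{[p]}R^{1/p^{e-1}},$$
whence $\tau(f^{pc})\subseteq J^{[p]}$. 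The reverse direction runs the same identity backwards: given $\tau(f^{pc})\subseteq J^{[p]}$, write $f^{pc}=\sum j_i^p r_i$ with $j_i\in J$ and $r_i\in R^{1/p^{e-1}}$, recognize this as $\bigl(\sum j_i r_i^{1/p}\bigr)^p$, and use injectivity of Frobenius on the integral domain $R^{1/p^e}$ to conclude $f^c=\sum j_i r_i^{1/p}\in JR^{1/p^e}$. The case of arbitrary real $c$ then follows from the stabilization property of Definition \ref{testideal}, applied simultaneously to $c$ and $pc$ via the approximating sequence $c_n=\lceil cp^n\rceil/p^n$, since $pc_n=\lceil cp^n\rceil/p^{n-1}\in\Z[1/p]$ also approximates $pc$ from above.

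The main obstacle will be this reverse direction. The subtlety is that $\tau(f^{pc})$ itself need not be a Frobenius power — one cannot directly take $p$-th roots of its generators in $R$. The argument sidesteps this by working with the larger \emph{structured} candidate $J^{[p]}$, whose generators are genuine $p$-th powers, for which the characteristic-$p$ identity does convert a sum of $p$-th-power-coefficient terms into an honest $p$-th power of a sum, after which Frobenius injectivity completes the descent. Only the function-valued, not order-preserving, nature of $(\cdot)^{[1/p]}$ is needed in the contrapositive step, which is crucial because $(\cdot)^{[1/p]}$ does in general collapse some strict inclusions (consistent with the converse of the lemma being false).
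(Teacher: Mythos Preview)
Your argument is correct. The identity $\tau(f^c)=\tau(f^{pc})^{[1/p]}$ holds for the reasons you give, and the contrapositive step is clean. The reverse direction of the equivalence, which you flag as the main obstacle, is handled properly: the point is exactly that one works not with $\tau(f^{pc})$ itself but with the Frobenius power $J^{[p]}$, whose generators are honest $p$-th powers, so the Frobenius additivity trick and injectivity on the domain $R^{1/p^e}$ go through.

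Your route differs from the paper's in packaging rather than substance. The paper argues directly with the coefficients $r_B$ appearing in the basis expansion $f^{b/p^e}=\sum r_B\mathbf{x}^{B/p^e}$: raising to the $p$-th power shows $\tau(f^{b/p^{e-1}})\subseteq\langle r_B^p\rangle$, and then the strict containment $\tau(f^{b/p^e})\subsetneq\tau(f^{a/p^e})$ is pushed up by the same $p$-th power trick to give $\tau(f^{b/p^{e-1}})\subsetneq\tau(f^{a/p^{e-1}})$. Your version abstracts this into the operation $I\mapsto I^{[1/p]}$ and the single identity $\tau(f^c)=\tau(f^{pc})^{[1/p]}$, which is in fact the framework of the original source \cite{BMS1}. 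The gain is a reusable statement that makes the contrapositive a one-liner; the paper's version is more self-contained and avoids introducing extra notation for a single use. Underneath, both are the same Frobenius computation.
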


\begin{proof}
Let $c$ be a jumping number, that is, suppose $\tau(f^c)\subsetneq \tau(f^{c-\varepsilon})$ for all $\epsilon>0$.   For any $ a/p^e < c \leq b/p^e$ where $a, b$ are positive integers, we thus have
$$
\tau(f^{\frac{b}{p^e}}) \subset \tau(f^{c})  \subsetneq \tau(f^{\frac{a}{p^e}}), 
$$
and the first inclusion is an equality when $\frac{b}{p^e}$ is sufficiently close to $c$.
For such close $\frac{b}{p^e}$, write
$$f^{b/p^e} = \sum_B r_B \x^{B/p^e},$$
so  that $\tau(f^{b/p^e}) $ is generated by the coefficients $r_B$.
Raising to the power $p$ gives that 
$$
f^{b/p^{e-1}} = \sum_B r_B^p \x^{B/p^{e-1}}, 
$$ which means, by Lemma \ref{simple}, that every $R$-linear map $R^{1/p^{e-1}} \overset{\phi}\rightarrow R$ sends $f^{b/p^{e-1}}$ to something in $ \langle r_B^p \rangle $.
In other words, 
\begin{equation}\label{5}
 \tau(f^{b/p^{e-1}}) \subseteq \langle r_B^p \rangle.
 \end{equation} 

 In contrast,  since $ \tau(f^{b/p^e}) \subsetneq \tau(f^{a/p^e}), $ we know  that $ f^{a/p^e} \not\in  \langle r_B \rangle R^{1/p^e}.  $ 
 Raising to $p$-th powers again, we have that $f^{a/p^{e-1}} \not\in  \langle r_B^p \rangle  R^{1/p^{e-1}}$. But then Equation \ref{5} forces 
   $f^{a/p^{e-1}} \not\in  \tau(f^{\frac{b}{p^{e-1}}}) R^{1/p^{e-1}}$. 
 By Lemma \ref{simple}, it follows that there is   an $R$-module homomorphism $R^{1/p^e} \overset{\phi}\rightarrow R$ such that 
$\phi(f^{a/p^{e-1}}) $ is not in $  \tau(f^{\frac{b}{p^{e-1}}})$.
But this exactly means that there is an element of $\tau(f^{a/p^{e-1}}) $ that is not in $  \tau(f^{\frac{b}{p^{e-1}}})$.
 Letting $a/p^e, b/p^e$  go to $c$, we get that  $ pc$  is a jumping number as well as $c.
 $
\end{proof}

\begin{proof}[Proof of Theorem \ref{rat}]
To prove discreteness, we fix an $f$ of degree $d$, and $c =\frac{a}{p^e}$. We claim that $\tau(f^c)$ is generated by elements of degree smaller or equal to $\lfloor cd \rfloor$. Indeed,  $\tau(f^c)$ is generated by the coefficients $r_A$ appearing in $f^c=\sum r_A \x^{a/p^e} \in R^{1/p^e}.$  Since $f^c$ has degree $dc$, then $r_A \in R$ has degree $\leq \lfloor cd \rfloor$. This proves the claim. 

Now assume that the $F$-jumping numbers of $f$, say   $\alpha_1 < \alpha_2 < \cdots$, were clustering to some $\alpha$.  Without loss of generality, each of the $\alpha_i$ can be assumed in $\Z[\frac{1}{p}]$. By definition of $F$-jumping number, 
\begin{equation}\label{marseille}
\tau(f^{\alpha_1}) \supsetneq \tau(f^{\alpha_2}) \supsetneq \cdots.
\end{equation} The previous claim ensures that
each $\tau(f^{\alpha_i})$ is  generated in degree  $\leq   \lfloor d \alpha_i \rfloor  \leq \lfloor d \alpha \rfloor = D$. Now intersect each of these test ideals with the finite dimensional vector space $V \subseteq \F_p[x_1, \ldots, x_n]$ consisting of polynomials of degree $\leq D$. The sequence 
$$\tau( f^{\alpha_1}) \cap V \supsetneq \tau( f^{\alpha_2}) \cap V \supsetneq \cdots$$
stabilizes, since $V$ is finite dimensional. Hence (\ref{marseille}) also stabilizes, which is a contradiction to clustering of the $F$-jumping numbers. 

To prove the rationality of $F$-jumping numbers, let  $c \in \R $ be an $F$-jumping number. Then for all $e\in \N$, the real numbers  $p^ec$ are also $F$-jumping numbers. One can write $p^e c=\lfloor p^e c \rfloor + \{ p^e c \}$, where the fractional part $\{ p^e c\}$ is also an $F$-jumping number by Lemma \ref{dumpling}. By the discreteness of the $F$-jumping numbers it follows that $\{ p^e c \}=\{ p^{e'}c\}$ for some $e$ and $e'$ in $\N$, and hence $p^e c - p^{e'}c=m \in \Z$. Thus $c= \frac{m}{p^e- p^{e'}}$ is rational. 
\end{proof}

\begin{Rmk} Test ideals and multiplier ideals can be defined not just for one polynomial, but for any ideal $\mathfrak a$ in any polynomial ring, and even for sheaves of ideals on (certain) singular ambient schemes. While not much more complicated that what we have introduced here, 
we refer the interested reader to the literature for this generalization. Many properties of multiplier ideals 
(see \cite[Chapter 9]{Laz}) can be directly proven, or adapted, to test ideals. For example, the fact that the $F$-jumping numbers are discrete and rational holds more generally---essentially for all ideals in normal $\Q$-Gorenstein 
ambient schemes \cite{BSTZ}. 
 In addition to the few properties discussed here, other properties of multiplier ideals  that have  analogs for test ideals include the ``restriction theorem," the ``subadditivity theorem," the ``summation theorem" \cite{HY} \cite{T},  and the behavior of test ideals under finite morphisms \cite{ST2}.
 Interestingly, some of the more difficult properties to prove in characteristic zero turn out to be extraordinarily simple in characteristic $p$. For example, in characteristic zero,  the proof of the Brian\c con-Skoda theorem (for ideals that are not necessarily principal) uses  the ``local vanishing theorem" (see \cite{Laz}).  Although this vanishing theorem fails in characteristic $p$, the characteristic $p$ analog of the Brian\c con-Skoda Theorem (that is, Theorem \ref{BS2} for non-principal ideals) is none-the-less true and in fact quite simple to prove immediately from the definition. On the other hand, some properties of multiplier ideals that follow immediately from the definition in terms of resolution of singularities turn out to be {\it false\/} for test ideals. For example,  while multiplier ideals are easily seen to be integrally closed,  test ideals are not. In fact, {\it every\/} ideal in a polynomial ring is the test ideal $\tau(\mathfrak a^{\lambda})$ for some ideal $\mathfrak a$ and some positive $\lambda \in \mathbb R$, as shown in \cite{MY}.
  \end{Rmk}

\vspace{0.1cm}

\subsection{\bf An interpretation of $F$-thresholds and test ideals using differential operators.}

Our definition of  $F$-threshold can be viewed as a measure of singularities using differential operators.  The point is that a differential operator on a ring $R$ of characteristic $p$ is precisely the same as a $R^{p^e}$-linear map.

Differential operators can be defined quite generally. Let $A$ be any base ring, and $R$ a commutative $A$-algebra. Grothendieck defined the ring of $A$-linear differential operators of $R$ using a purely algebraic approach  (see \cite{EGAIV4}), which in the case where $A = k$ is a field and $R$ is a polynomial ring over $k$ results in the ``usual" differential operators.

\begin{defi} Let $R$ be a commutative $A$-algebra. 
The ring $D_A(R)$ of $A$-linear differential operators is the subring of the (non-commutative) ring 
$\End_{A}(R)$ obtained as the union of  the $A$-submodules of differential operators $D_A^n(R)$
of order less than or equal to $n$, where $D^{n}_A(R)$ is defined inductively as follows:
First the zero-th order operators $D^0_A(R)$ are the elements $r \in R$ interpreted as the
$A$-linear endomorphisms ``multiplication by $r$"; that is, $r:R \lra R$ sends $x\mapsto rx$. 
Then, for $n >0$,
$$
D^{n}_A(R) := \{\partial \in \End_A(R) \,\,\, |\,\,\, [r, \partial] \in D^{n-1}_A(R) \,\,\,\,{\rm for \,\,\,all \/}\,\,\,\, r\in R\}, 
$$
where $[r, \partial]$ is the usual Lie bracket of operators, i.e., $[r, \partial]=r \circ \partial - \partial \circ r$. 
\end{defi}

\begin{ex}
The elements of $D^1_A(R)$ consist of the endomorphisms of the form $r + d$ where $d$ is an $A$-linear derivation of $R$ and $r \in R$.
\end{ex}
\begin{ex}\label{char0diff} If $k$ has characteristic zero and $R$ is the polynomial ring $k[x_1, \dots, x_n]$, then $D_k(R)$ is the Weil algebra $k[x_1, \dots, x_n, \partial_1, \dots, \partial_n]$ where each $\partial_i$ denotes the derivation $\frac{\partial}{\partial x_i}$.  This is the non-commutative subalgebra of $\End_k(R)$ generated by the $\frac{\partial}{\partial x_i}$ and the multiplication by $x_j$. 
\end{ex}

\begin{ex}\label{charpdiff}
In characteristic $p$, the differential operators on $k[x_1,\dots, x_n]$ are essentially ``the same" as in Example \ref{char0diff} as $k$-vector spaces, but not as rings. For example, 
if $k$ has characteristic $p$, the operator  
$$\Big(\frac{\partial}{\partial x_i}\Big)^p = \underset{p \,\, times}{\underbrace{{\Big(\frac{\partial}{\partial x_i} \circ  \cdots \circ \frac{\partial}{\partial x_i}\Big)}}}
$$
obtained by composing the first order operator $\frac{\partial}{\partial x_i}$ with itself $p$-times is the {\it zero operator.\/} None-the-less, there is a differential operator
$$\frac{1}{p!}\frac{\partial^p}{\partial x_i^p}$$ sending $x_i^p$ to $1$,
which is  {\it not\/} the composition of lower order operators but which essentially has the same effect as the corresponding composition in characteristic zero.  
In particular, a $k$-basis for  $D_k(k[x_1,\dots, x_n])$, where $k$ has characteristic $p$, is
$$
\Big\{  x_1^{j_1}\cdots x_n^{j_n}, \,\, \frac{1}{i_1!}\frac{\partial^{i_1}}{\partial x_1^{i_1}} \circ \frac{1}{i_2!}\frac{\partial^{i_2}}{\partial x_2^{i_2}} \circ \cdots \circ 
\frac{1}{i_n!}\frac{\partial^{i_n}}{\partial x_n^{i_n}} \Big\}
$$
as $j_{\ell}$ and  $i_{\ell}$ range over all non-negative integers. 
In characteristic $p$, $D_k(k[x_1,\dots, x_n])$ is not finitely generated.
\end{ex}
\medskip
The following alternate interpretation of differential operators in characteristic $p$ ties into the definition of F-threshold.
\begin{Prop}
Let $R$ be any ring of prime characteristic $p$ such that the Frobenius map is finite (main case for us: 
$R = \F_p[x_1, \dots, x_n]$). Then an $\F_p$-linear map $R\overset{d}\lra R$ is a differential operator if and only if it is linear over some subring of $p^e$-th powers. That is,
$$
D_{\F_p}(R) = \bigcup_{e\in \N} \End_{R^{p^e}}(R).
$$
\end{Prop}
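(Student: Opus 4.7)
The statement comprises two inclusions, which I would prove separately, each relying on Lucas's theorem (already established in the excerpt) plus the commutator identity
\[
[r^{N}, d] \;=\; \sum_{i=1}^{N} \binom{N}{i}\,\mathrm{ad}(r)^{i}(d)\cdot r^{N-i},
\]
obtained by expanding $L_r^N = (R_r + \mathrm{ad}(r))^N$ via the binomial theorem (valid since $L_r$ and $R_r$ commute on $\End(R)$).

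For $D_{\F_p}(R) \subseteq \bigcup_e \End_{R^{p^e}}(R)$, I would take $d$ of order $\leq N$ and choose $e$ with $p^e > N$. In the identity above with exponent $p^e$, Lucas's theorem kills $\binom{p^e}{i}$ mod $p$ for $0 < i < p^e$, while the inductive definition of order forces $\mathrm{ad}(r)^i(d) = 0$ for $i > N$; every term vanishes, so $[r^{p^e}, d] = 0$ for all $r \in R$. As $R^{p^e}$ is generated as an $\F_p$-algebra by the $p^e$-th powers, $d$ is $R^{p^e}$-linear.

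For the reverse inclusion, my preferred route is via Grothendieck's description of differential operators through the diagonal ideal $I := \ker(R\otimes_{\F_p} R \to R)$, namely $D^m_{\F_p}(R) \cong \Hom_R\bigl((R\otimes_{\F_p} R)/I^{m+1},\,R\bigr)$; tensor-hom adjunction similarly gives $\End_{R^{p^e}}(R) \cong \Hom_R(R\otimes_{R^{p^e}} R,\,R)$. Let $J_e$ be the kernel of the surjection $R\otimes_{\F_p} R \to R\otimes_{R^{p^e}} R$, which is the ideal generated by the ``Frobenius commutators'' $r^{p^e}\otimes 1 - 1\otimes r^{p^e}$ for $r \in R$. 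The reverse inclusion then reduces to the containment $I^{m+1} \subseteq J_e$ for some $m$ depending on $e$. In the polynomial ring $R = \F_p[x_1, \dots, x_n]$, $I$ is generated as an ideal by $\xi_i := x_i\otimes 1 - 1\otimes x_i$ for $i = 1, \ldots, n$, and Frobenius additivity gives the crucial identity
\[
\xi_i^{p^e} \;=\; x_i^{p^e}\otimes 1 - 1\otimes x_i^{p^e} \;\in\; J_e.
\]
By pigeonhole, any monomial $\xi_1^{b_1}\cdots \xi_n^{b_n}$ of total degree at least $n(p^e - 1) + 1$ has some $b_i \geq p^e$ and therefore contains $\xi_i^{p^e}$ as a factor, lying in $J_e$; hence $I^{n(p^e - 1) + 1} \subseteq J_e$, which yields $\End_{R^{p^e}}(R) \subseteq D^{n(p^e-1)}_{\F_p}(R)$.

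The main obstacle is importing the Grothendieck identification $D^m \cong \Hom_R((R\otimes R)/I^{m+1}, R)$, which is standard but absent from the excerpt; one must verify that under the map $d \mapsto (r\otimes s \mapsto r\,d(s))$ the iterated commutator $\mathrm{ad}(r_0)\cdots\mathrm{ad}(r_m)(d)$ corresponds, up to sign, to evaluation on the generator $\prod_j (r_j\otimes 1 - 1\otimes r_j)$ of $I^{m+1}$. A self-contained alternative, staying inside the commutator formalism, is to show $\mathrm{ad}(r)^{p^e}(d) = 0$ directly (inverting the identity above via Lucas), note that the $\mathrm{ad}(x_i)$ pairwise commute on $\End(R)$ (Jacobi plus $[x_i, x_j] = 0$) and commute with all $L_{x_j}, R_{x_j}$, and then expand an arbitrary iterated commutator $\mathrm{ad}(r_0)\cdots\mathrm{ad}(r_M)(d)$ as a sum of products of multiplication operators and $\mathrm{ad}(x_i)$'s using the derivation rule $\mathrm{ad}(rs) = L_r\mathrm{ad}(s) + R_s\mathrm{ad}(r)$; pigeonhole on the $n$ indices then produces an $\mathrm{ad}(x_i)^{p^e}$ factor in every summand once $M > n(p^e - 1)$, giving the same order bound but with more bookkeeping than the principal-parts route.
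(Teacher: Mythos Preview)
The paper does not actually prove this proposition: its entire proof reads ``This is not a difficult fact to prove. It is first due to [Yek]; or see [SV] for a detailed proof in this generality.'' So there is no argument to compare against; your proposal supplies far more than the paper does.

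Your forward inclusion $D_{\F_p}(R)\subseteq\bigcup_e\End_{R^{p^e}}(R)$ is correct and works for arbitrary $R$ of characteristic $p$: the commutator identity, Lucas's vanishing of $\binom{p^e}{i}$ for $0<i<p^e$, and the order bound on iterated $\mathrm{ad}$'s combine exactly as you say to give $[r^{p^e},d]=0$.

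Your reverse inclusion via the principal-parts identification $D^m\cong\Hom_R\bigl((R\otimes_{\F_p}R)/I^{m+1},R\bigr)$ and the computation $\xi_i^{p^e}\in J_e$, hence $I^{n(p^e-1)+1}\subseteq J_e$, is also correct---but only for the polynomial ring, since you use that $I$ is generated by the finitely many $\xi_i$. The proposition as stated covers any $R$ with finite Frobenius. To close that gap you can argue as follows: since $R$ is finite over $R^{p^e}$, the ring $R\otimes_{R^{p^e}}R$ is a finite $R$-module; assuming $R$ is Noetherian (implicit in the paper's setting), this ring is Noetherian, so the image $\bar I$ of $I$ there is finitely generated, say by $\alpha_1,\dots,\alpha_k$. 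Each $\alpha_j$ is an $R\otimes R$-combination of elements $r\otimes 1-1\otimes r$, and since the ambient ring has characteristic $p$ one gets $\alpha_j^{p^e}=0$; pigeonhole then gives $\bar I^{\,k(p^e-1)+1}=0$, yielding $\End_{R^{p^e}}(R)\subseteq D^{k(p^e-1)}_{\F_p}(R)$. This is essentially the argument in the cited references, and your polynomial-ring proof is the explicit special case with $k=n$ and $\alpha_j=\xi_j$.
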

\begin{proof} This is not a difficult fact to prove. It is first due to  \cite{Yek}; or see \cite{SV} for a detailed proof in this generality.
\end{proof}

This gives us an alternate filtration of $D_{\F_p}(R)$ by ``Frobenius order:"
$
D_{\F_p}(R)
$
is the union of the chain
$$
\End_{R}(R) = R \subset \End_{R^p}(R) \subset  \End_{R^{p^2}}(R) \subset \End_{R^{p^3}}(R) \subset \dots
$$
Alternatively, by taking $p^e$-th roots, we can interpret the ring of differential operators as the union
$$
\End_{R}(R) = R \subset \End_{R}(R^{\frac{1}{p}}) \subset  \End_{R}(R^{\frac{1}{p^2}}) \subset 
\End_{R}(R^{\frac{1}{p^3}})\subset \dots
$$

Using this filtration, we can give an alternative definition of test ideals and $F$-threshold in terms of differential operators:
\begin{defi}
The \emph{$F$-threshold} of $f \in  \F_p[x_1,\ldots,x_n]$ at the maximal ideal $\mf{m} $ is defined as
$$
FT_{\mf{m}}(f)=\sup\Big\{c=\frac{a}{p^e} \in \mathbb Z\Big[\frac{1}{p}\Big] \,\, \ \big|\ \exists\ \partial \in \End_{R}(R^{1/p^e}) \hbox{\,\, such that \ } \partial(f^{a/p^e}) \not\in\mf{m}R^{1/p^e}\Big\}.
$$
\end{defi}
Note that this interprets the F-threshold as very much like the multiplicity: it is defined as the maximal (Frobenius) order of a differential operator which, when applied to (a power of) $f$, we get a non-vanishing function. However, here the operators are filtered using Frobenius. 

Similarly, the test ideal is the image of $f^{a/p^e}$ over all differential operators of $R^{1/p^e}$ which have image in $R$.

%{\vskip{0.1cm} }

\begin{Rmk}{\bf Historical Remarks and Further Work.}
The $F$-threshold was first defined by Shunsuke Takagi and Kei-ichi Watanabe in \cite{TW}, who called it the $F$-pure threshold. The definition looked quite different, since they defined it using ideas from Hochster and Huneke's tight closure theory \cite{HH1}. Expanding on this idea, Hara and Yoshida \cite{HY} introduced the test ideals (under the name ``generalized test ideals" in reference to the original test ideal of Hochster and Huneke, which was not defined for pairs), and soon later using this train of thought, $F$-jumping numbers were introduced in \cite{MTW}, where they are called $F$-thresholds. 
 The definition of  $F$-pure threshold, test ideals, and the higher $F$-jumping numbers we presented here is essentially from \cite{BMS1}, where it is also proven that this point of view is equivalent (in regular rings) to the previously defined concepts.  This point of view  removes explicit mention of tight closure, focusing instead on $R^{p^e}$-linear maps (or differential operators).
 
 We have presented the definition and only the simples properties of F-threshold and test ideals, 
and only in the simplest possible case: of one polynomial in a polynomial ring, or what amounts to a hypersurface in a smooth ambient scheme. Nor have we included any substantial applications. 
We urge the reader to investigate the survey  \cite{ST}, or others previously mentioned. In particular, test  ideals are defined not only for individual polynomials, but for any ideal, and the ambient ring need not be regular (for example, the $F$-jumping numbers are discrete and rational in greater generality \cite{BSTZ}). There  are a great number of beautiful applications to developing tools from birational geometry in characteristic $p$ using test ideals, such as Schwede's ``centers of $F$-purity" and $F$-adjunction \cite{Sch1} \cite{Sch2}; see the papers of Schwede, Tucker, Takagi, Hara, Watanabe, Yoshida, Zhang, Blickle, and others listed in the bibliographies of \cite{ST}.

  \end{Rmk}

\section{Unifying the prime characteristic and zero characteristic approaches.}

We defined the log canonical threshold for complex polynomials using integration, and
the $F$-threshold for characteristic $p$ polynomials using differential operators. However, as we have seen, in characteristic zero, our approach was equivalent to a natural approach to measuring singularities  in birational geometry. 
Since birational geometry makes sense over any field, might we also be able to define the $F$-threshold directly in this world as well?

This approach does not work as well as we would hope in characteristic $p$. Two immediate problems come to mind. First, resolution of singularities is not known in characteristic $p$.  It turns out that  this is not a very serious problem.
Second, and more fatally, some of the vanishing theorems for cohomology that make multiplier ideals such a useful tool in characteristic zero actually {\it fail\/} in prime characteristic.  

The  lack of   Hironaka's Theorem  in characteristic $p$  can be circumvented as follows. We  look at  {\it all proper birational maps} $X \overset{\pi}\longrightarrow \C^N$ with $X$ normal.
 If $X$ is a normal variety, the needed machinery of divisors goes through as in the smooth case, because the singular locus of a normal variety is of codimension two or higher. 
 To define the order of vanishing of a function along an irreducible divisor $D$, we  restrict to any (sufficiently small) smooth open set meeting the divisor. Thus, the relative canonical divisor $K_{\pi}$ can be defined for a map $X \overset{\pi}\longrightarrow \C^N$, for any {\it normal\/}  $X$, as can the divisor $F = $ div$(f \circ \pi)$. 
 That is, if  $X \overset{\pi}\longrightarrow \C^N$ with $X$ normal, we can define $K_{\pi}$ and $F$ as the divisor of the Jacobian determinant of $\pi_{|U} $ and the divisor of $f\circ \pi_{|U}$ respectively, where $U \subset X$ is the smooth locus of $X$ (or any smooth subset of $X$ whose complement is codimension two or more).
 
 So we can attempt to define the log canonical threshold in arbitrary characteristic as follows:
 
\begin{defi}\label{lctp} The \emph{log canonical threshold} of  a polynomial $f \in k[x_1, \dots, x_n]$  is defined as
$$ lct(f):=\sup \left\{ \lambda  \in \R_{+} \ \big|\  \lceil K_{\pi} - \lambda F\rceil\,\,\,\, {\rm{is \,\,\,\, effective\,}} \right\},$$
as we range over all proper birational morphisms $X \overset\pi\longrightarrow \A_k^n$ with $X$ normal.
\end{defi}

\begin{Rmk} It is not hard to show that, in characteristic zero,  this definition produces the same value as Definition \ref{lct}. See  \cite[Thm 9.2.18]{Laz}. \end{Rmk}

Similarly, for any divisor on a normal variety $X$, the sheaves $\mc{O}_X(D)$ are defined.{\footnote{Although unlike the smooth case, they need not be invertible sheaves in general.}} So we can also attempt to define the multiplier ideal in characteristic $p$ similarly, by considering {\it all} proper birational models:

\begin{defi}\label{numerop}
Let $f$ be a polynomial in $n$ variables, and $\lambda$ a positive real number.  The multiplier ideal  is
$$\J( f^{\lambda}) =  \!\!\!\! \bigcap_{X \overset{\pi}\longrightarrow \A_k^n}\!\!\!\! \pi_*\mc{O}_X(\lceil K_\pi - \lambda F \rceil)  = \{ h \in \C[x_1, \ldots x_n]\  |\   {\rm div}(h) + \lceil K_\pi - \lambda F \rceil \geq 0\}
$$ 
as we range over normal varieties $X$, mapping  properly and birationally to $\A_k^n$ via $\pi$, where $K_{\pi}$ is the relative canonical divisor, and $F$ is the divisor div$(f\circ \pi)$ on $X$. 
Equivalently, this amounts to
$$
\J( f^{\lambda}) = \{h \in k[x_1, \dots, x_n]\,\, | \,\,  {\rm ord}_E(h) \geq \lambda {\rm ord}_E(f\circ \pi) - \ord_E(\Jac_{\C}(\pi))\},
$$
where we range over all irreducible divisors $E$ lying on a normal $X$ mapping properly and birationally to $\A_k^n$, say $X \overset{\pi}\longrightarrow\A_k^n$.
\end{defi}

 Again, in  characteristic 0, this produces the same definition as before.
  Does the multiplier ideal in  characteristic $p$ (produced by Definition \ref{numerop}) have the same good properties as in characteristic zero? The answer is NO. 
 The problem occurs with the behavior of multiplier ideals in prime characteristic under wildly ramified maps: they simply do not have the properties we expect of multiplier ideals based on their behavior in characteristic zero (see \cite[Example 6.33]{ST} or \cite[Example 7.12]{ST}). 
 The test ideals have better properties in char $p$ than the multiplier ideals.  They accomplish much of what multiplier ideals do in characteristic zero. 
The survey  \cite{ST} gives an excellent introduction to this topic.

One reason the multiplier ideals fail to be useful in prime characteristic is that certain vanishing theorems fail that 
contribute to the magical properties of multiplier ideals over $\mathbb C$.
 For example,  a very useful statement is ``local vanishing'': 
  If $\pi: X \rightarrow \A_k^N$ is a log resolution of a complex polynomial $f$, then
$$ R^i \pi_* \mc{O}_X(K_\pi - \lfloor c F \rfloor)=0$$
for all $i >0$  (cf. {\cite[Theorem 9.4.1]{Laz}}).
For example, local vanishing is needed  to prove the Brian\c con-Skoda theorem for non-principal ideals in characteristic zero.  Unfortunately, this vanishing theorem is false in characteristic $p$. Fortunately,  the Brian\c con-Skoda theorem for test ideals can be proven quite simply  in characteristic $p$, using Frobenius instead of vanishing theorems.

On the other hand, for ``large $p$," it is true that the multiplier ideals ``reduce mod $p$" to the test ideals.

\subsection{\bf Idea of reduction modulo $p$.}
Fix a polynomial $f\in\Q[x_1,\dots,x_n]$, or (by clearing denominators) in $\Z[x_1,\dots,x_n]$. Fix a  log resolution of $f$ over $\Q$, say given by $X_\Q \overset{\pi}{\longrightarrow}\A_\Q^n$. We can ``thicken" $X_{\Q}$ to a scheme $X_{\Z}$ over $\Z$, and so get a  family of maps over $\Spec \Z$ described by the 
 following diagram:
$$\xymatrix@C=3pc@R=3pc{
X_\Q\ar[d]_{\pi} & X_\Z\ar[d] & X_{\F_p}\ar@{_(->}[l]\ar[d]\\
\A_\Q^n\ar@{^(->}[r]\ar[d] & \A_\Z^n \ar[d] & \A^n_{\F_p}\ar@{_(->}[l]\ar[d]\\
\Spec\Q\ar@{^(->}[r] & \Spec{\Z} & \Spec(\F_p)\ar@{_(->}[l]
}$$
where the right hand side gives a fiber over a closed point $p$ in $\Spec \Z$ and the left hand side shows the generic fiber. Because the generic fiber is a log resolution of $f$, it follows that for an open set of closed fibers, we also have a log resolution of $f$. That is, 
we can assume $X_{\F_p} \longrightarrow \A_{\F_p}^n $ is a log-resolution of $f$ for $p\gg 0$.

The multiplier ideal $\J(\A^n_\Q,f^c)\subset \Q[x_1,\dots,x_n]$ can be viewed as an ideal in  $\Z[x_1, \dots, x_n]$ by clearing denominators if necessary; abusing notation we denote the ideal in $\Z[x_, \dots, x_n]$ and $\Q[x_1, \dots, x_n]$ the same way. So we 
 can reduce modulo $p$ and obtain an analog of multiplier ideals in positive characteristic given by:
$$\J(\A^n_{\F_p},f^c)=\J(\A^n_\Q,f^c)\otimes \F_{p}.$$
These turn out to be the test ideals for $p\gg 0$!

\begin{Thm}{\rm(}\cite[Theorem 3.1]{Sm00}, \cite{Hara}, \cite[Theorem 6.8]{HY}{\rm)}.

\begin{itemize}
\item For all $p\gg0$ and for all $c$,
$$\tau(\F_p[x_1,\dots,x_n],f^c)\subseteq \J(\Q[x_1,\dots,x_n],f^c)\otimes \F_p.$$
\item Fix $c$, for all $p\gg0$,
$$\tau(\F_p[x_1,\dots,x_n],f^c) =  \J(\Q[x_1,\dots,x_n],f^c)\otimes \F_p.$$  (How large is large enough for $p$ depends on $c$.)
\end{itemize}
\end{Thm}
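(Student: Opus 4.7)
The strategy is to compare both ideals on a single common log resolution obtained by spreading out, then combine vanishing theorems from characteristic zero with the Cartier operator and Frobenius duality in characteristic $p$.

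First I would fix a log resolution $\pi_\Q\colon X_\Q\to \A_\Q^n$ of $f$ and spread it to a proper birational morphism $\pi_\Z\colon X_\Z\to \A_{\Z[1/N]}^n$ for some integer $N$. After possibly enlarging $N$, we may assume that for every prime $p\nmid N$ the fiber $\pi_p\colon X_p\to \A^n_{\F_p}$ is a log resolution of $f\bmod p$, and that the $\Z[1/N]$-divisors $K_{\pi_\Z}$ and $\Div(f\circ\pi_\Z)$ restrict fiberwise to $K_{\pi_p}$ and $\Div(f\circ\pi_p)$ respectively. This reduces both assertions to comparisons on $X_p$: the local vanishing theorem
$$R^{i}\pi_{\Q*}\mc{O}_{X_\Q}(\lceil K_{\pi_\Q}-cF_{\pi_\Q}\rceil)=0 \quad (i>0)$$
holds in characteristic zero, and once $c$ is fixed it spreads out to the analogous vanishing on $X_p$ for $p\gg 0$. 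Cohomology-and-base-change then yields
$$\J(\Q[\x],f^c)\otimes\F_p \;=\; \pi_{p*}\mc{O}_{X_p}(\lceil K_{\pi_p}-cF_{\pi_p}\rceil) \qquad (p\gg 0).$$

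For the first bullet, it suffices to prove
$$\tau(\F_p[\x],f^c)\;\subseteq\;\pi_{p*}\mc{O}_{X_p}(\lceil K_{\pi_p}-cF_{\pi_p}\rceil)$$
for all primes $p\nmid N$ and all $c\in\R_+$. This is essentially a transformation rule for test ideals under proper birational maps of smooth schemes. Concretely, given $h=\varphi(f^{a/p^e})\in\tau(f^c)$ with $\varphi\in\Hom_R(R^{1/p^e},R)$ and $a/p^e\geq c$, I would extend $\varphi$ in each local chart of $X_p$, where both $f\circ\pi_p$ and $\Jac_\C(\pi_p)$ are pure monomials. A direct monomial computation then yields the divisorial estimate
$$\ord_E(h)+\ord_E(K_{\pi_p})\;\geq\; c\cdot \ord_E(f\circ\pi_p)$$
for every prime divisor $E$ on $X_p$, which is exactly membership in the pushforward on the right. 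Since $N$ depends only on $f$, this holds uniformly in $c$.

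For the equality in the second bullet I would establish the reverse inclusion $\pi_{p*}\mc{O}_{X_p}(\lceil K_{\pi_p}-cF_{\pi_p}\rceil)\subseteq\tau(\F_p[\x],f^c)$ by exploiting the Grothendieck-dual description
$$\Hom_R(R^{1/p^e},R)\;\cong\;F^{e}_*\omega_{R}$$
and its counterpart on $X_p$, together with the surjectivity of the Cartier operator $F^{e}_*\omega_{X_p}\twoheadrightarrow \omega_{X_p}$, which is a phenomenon special to characteristic $p$. Twisting, pushing down, and combining with the spread-out vanishing above, one deduces that for $e\gg 0$ the evaluation map
$$\Hom_R(R^{1/p^e},R)\;\lra\; R, \qquad \varphi\mapsto \varphi\bigl(f^{\lceil cp^e\rceil/p^e}\bigr)$$
has image equal to $\pi_{p*}\mc{O}_{X_p}(\lceil K_{\pi_p}-cF_{\pi_p}\rceil)$. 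The main obstacle is precisely the surjectivity of this evaluation map: it forces us to use both the characteristic-$p$ surjectivity of the Cartier operator \emph{and} the spread-out vanishing on the log resolution at a $c$-dependent twist. This is the reason the bound on $p$ ends up depending on $c$ (we need $e$ large enough for the rounding $\lceil cp^e\rceil/p^e$ to approximate $c$ well, and $p$ large enough for the relevant higher direct image to vanish in characteristic $p$), whereas the opposite inclusion required only that $p\nmid N$.
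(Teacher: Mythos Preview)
The paper does not prove this theorem; it is quoted from \cite{Sm00}, \cite{Hara}, and \cite[Theorem~6.8]{HY}, with only the historical remark (following Theorem~\ref{reduce}) that the argument in \cite{HY} generalizes those of \cite{Hara} and \cite{Sm00} to pairs. So there is no in-paper proof to compare your proposal against.

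That said, your plan is essentially the strategy of the cited literature: spread out a log resolution, prove the easy inclusion by a divisorial estimate on $X_p$, and prove the hard inclusion by combining surjectivity of the Cartier operator with the reduction-mod-$p$ of local vanishing. One point in your outline needs care. You identify $\J(\Q[x_1,\dots,x_n],f^c)\otimes\F_p$ with $\pi_{p*}\calo_{X_p}(\lceil K_{\pi_p}-cF_{\pi_p}\rceil)$ via cohomology-and-base-change, and correctly note that this requires vanishing that depends on $c$. You then say the first bullet follows from the containment $\tau(f^c)\subseteq\pi_{p*}\calo_{X_p}(\lceil K_{\pi_p}-cF_{\pi_p}\rceil)$ for all $p\nmid N$ and all $c$; but without a $c$-independent identification this does not yet yield $\tau(f^c)\subseteq\J(f^c)\otimes\F_p$ \emph{uniformly} in $c$, which is exactly what the first bullet asserts. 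The standard fix is to observe that by discreteness of the jumping numbers and Brian\c con--Skoda (Theorem~\ref{bs} and Proposition~\ref{BS2}) there are only finitely many distinct multiplier ideals to handle modulo the periodicity $c\mapsto c+1$, so one can take the maximum of the finitely many $c$-dependent bounds on $p$ and obtain a single $p_0$ that works for all $c$ simultaneously.
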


As we have explained however, the test ideals are probably the ``right" objects to use in each particular  characteristic $p$. 

Recently, Blickle, Schwede and Tucker have found an interesting way to unify test ideals and multiplier ideals (see \cite{BST}). The idea is to look at  a broader class of  proper maps, not just birational ones.
Recall that  a surjective morphism of varieties $X \overset{\pi}\rightarrow Y$ is an {\it alteration} if it is proper and generically finite. We say an alteration is separable if
the corresponding   extension of function fields $k(Y) \subset k(X)$ is separable.  
Note that such $\pi$  always  factors as $X \overset{\phi}\rightarrow \tilde Y \overset{\nu}\rightarrow Y$ where $\phi$ is proper  birational and $\nu$ is finite.  

Consider a separable alteration $X \overset{\pi}\rightarrow \A^n$, with $X$ normal.
Denote by $F_{\pi}$ the divisor on $X$ defined by $f \circ \pi$ and by $K_{\pi}$ the divisor on $X$ defined by the Jacobian.{\footnote{By which we mean the unique divisor on $X$ which agrees with these divisors on the smooth locus of $X$. This is possible since $X$ is normal; see the beginning paragraphs of Section 4.}}
As before in our computation of the multiplier ideal, the idea is to push down the sheaf of ideals  $\mathcal O_X(\lceil K_{\pi} - \lambda F_{\pi}\rceil)$ to $\mathcal O_X$. However, this will only produce a subsheaf of $\pi_* \mathcal O_X$, which is not $\mathcal O_{\A^n} $ but rather some normal finite extension. Let us denote its global sections by $S$, which is a normal finite extension of the polynomial ring $R$.
To produce an ideal in $R$, we can use the {\it trace map.}

\subsection{Trace} Let $R \subset S$ be a finite extension of normal domains, with corresponding  fraction field extension $K \subset L$. The field trace is a $K$-linear map
$$L \rightarrow K$$
sending each $\ell \in L$  to the trace of the $K$-linear map $L \rightarrow L$ given by multiplication by $\ell.$
Because $S$ is integral over the normal ring $R$, it is easy to check that this restricts to an $R$-linear map
$$
S \overset{tr}\rightarrow R.
$$
In particular, every ideal of $S$ is sent, under the trace map, to an ideal in $R$. Using this, we can give a uniform definition of the multiplier ideal and test ideal. For any separable alteration $X \overset{\pi}\rightarrow \A^n$ with $X$ normal, denote by $tr_{\pi}$ the trace of the ring extension map $R \hookrightarrow S = \mathcal O_X(X)$.
Then we have:

\begin{Thm}{\rm(}\cite{BST}{\rm).}
Fix a polynomial  $f \in k[x_1, \dots, x_n]$ where $k$ is an arbitrary field,  and let $c$ be any positive real number. Define  
$$J:=\bigcap_\pi tr_{\pi}(\pi_*\calo_X(\lceil K_{\pi} -  cF_{\pi}\rceil)). $$ where $\pi$ varies over all possible normal varieties $X$ mapping properly and generically separably to $\A^n$. Then,
$$
J=\left\{
\begin{array}{ll}
\J(f^c) & \hbox{if the characteristic is }0,\\
\tau(f_p^c) & \hbox{if the characteristic is }p>0.
\end{array}\right.
$$
\end{Thm}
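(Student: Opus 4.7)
The plan is to establish both inclusions in each characteristic by identifying a distinguished class of separable alterations that achieves equality with the respective ideal, and by showing that every other separable alteration produces a trace image containing it. The argument naturally factors via the decomposition $X \overset{\phi}{\longrightarrow} \widetilde{Y} \overset{\nu}{\longrightarrow} \A^n$ of a separable alteration into a proper birational $\phi$ and a finite separable $\nu$, so that $\Tr_\pi$ contributes nontrivially only in the finite step, while the birational step only affects the divisorial data via Fact~\ref{f1}--type computations.

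In characteristic zero, I would first take $\pi$ to be a log resolution. Since $\pi$ is birational and $X$, $\A^n$ are both normal, Zariski's main theorem gives $\pi_*\calo_X = \calo_{\A^n}$ and the trace map $\Tr_\pi$ is the identity on the common function field; the trace image is then exactly $\pi_*\calo_X(\lceil K_\pi - cF_\pi\rceil) = \J(f^c)$ by Definition~\ref{numero0}, so $J \subseteq \J(f^c)$. For the reverse inclusion, for an arbitrary separable alteration factoring as above, I would apply the Riemann--Hurwitz formula to write $K_\nu = K_{\widetilde{Y}} - \nu^*K_Y + R_\nu$ for the ramification divisor $R_\nu$, and use the classical fact that $\Tr$ sends $\calo_{\widetilde{Y}}(R_\nu + \nu^*D)$ surjectively onto $\calo_{\A^n}(D)$. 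Combined with the birational behaviour of $\phi$ governed by Fact~\ref{f1}, this shows $\J(f^c) \subseteq \Tr_\pi(\pi_*\calo_X(\lceil K_\pi - cF_\pi\rceil))$ for every separable alteration, completing the characteristic zero case.

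In characteristic $p$, the birational step does \emph{not} recover the test ideal (indeed it recovers the characteristic-$p$ analogue of the multiplier ideal, which can be strictly larger than $\tau(f^c)$), so the argument must genuinely use non-birational separable covers. For $J \subseteq \tau(f^c)$, I would construct a specific separable alteration realizing $\tau(f^c)$ as a trace image: since $\Hom_R(R^{1/p^e}, R)$ is a free $R^{1/p^e}$-module of rank one when $R$ is regular, a generating homomorphism can be realized geometrically via the normalization of $\A^n$ in a cyclic separable extension, composed with a log resolution of $f$, whose ramification divisor is tuned so that the associated trace equals this generator. For the reverse inclusion $\tau(f^c) \subseteq J$, I would invoke Definition~\ref{test}: any element of $\tau(f^c)$ has the form $\varphi(f^{a/p^e})$ for some $\varphi \in \Hom_R(R^{1/p^e}, R)$, and Grothendieck duality for $\pi$ identifies $\varphi$ with a trace on $\pi_*\calo_X(\lceil K_\pi - cF_\pi\rceil)$ evaluating to the same element, placing it inside every trace image.

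The main obstacle will be the geometric realization in characteristic $p$ of the rank-one module $\Hom_R(R^{1/p^e}, R)$ via trace maps of \emph{separable} alterations, since the Frobenius extension $R \hookrightarrow R^{1/p^e}$ itself is purely inseparable and hence excluded from the class of alterations considered. Approximating Frobenius-linear behaviour by honest separable covers whose ramification divisors produce the correct twist of $K_\pi$ is the technical heart, and once this is in place one must check compatibility with the rounding $\lceil K_\pi - cF_\pi\rceil$ so that the equality $J = \tau(f^c)$ persists for arbitrary positive real $c$, not only those in $\Z[\tfrac{1}{p}]$; this is handled by the approximation-from-above construction used to extend Definition~\ref{testideal} to real exponents.
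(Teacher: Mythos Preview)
The paper does not prove this theorem. It is quoted from \cite{BST} and followed only by an explanatory paragraph on what the statement means and a remark that the intersection in fact stabilizes on a single alteration. There is therefore no proof in the paper against which to compare your proposal.

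As for the proposal on its own merits: your characteristic-zero outline is essentially correct in shape, and the direction $\tau(f^c)\subseteq J$ in characteristic $p$ via duality is also the standard ``easy'' containment. But the step you flag as the main obstacle---producing, for each $c$, a \emph{separable} alteration whose trace image lands inside $\tau(f^c)$---is not merely a technicality to be ``tuned''; it is the entire content of the theorem in \cite{BST}. Your suggestion to realize a free generator of $\Hom_R(R^{1/p^e},R)$ via the trace of a cyclic separable cover does not work directly: the Frobenius trace is not the field trace of any separable extension, and no single finite separable cover of $\mathbb{A}^n$ has this property. What \cite{BST} actually does is prove a characteristic-$p$ vanishing statement ``up to finite cover'' (see the paper's closing remarks on \cite[Theorem~5.5]{BST} and the connection to \cite{HH3}), from which the existence of the required alteration is extracted; this is substantially deeper than a ramification computation. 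Your proposal correctly locates the difficulty but does not supply the mechanism that overcomes it.
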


Note that each $ \pi_*\calo_X(\lceil K_{\pi} -  c\pi^*D\rceil) $ is an ideal in $\pi_*\calo_X$,  whose global sections  form some finite extension $S$ of the polynomial ring $R = k[x_1, \dots, x_n].$ So its image under the trace map is an ideal in $R$.
 The theorem says that if we intersect all such ideals of $R$, we get the test/multiplier ideal of $f^{\lambda}$.

In fact, Blickle, Schwede and Tucker prove even more: The intersection stabilizes. So there is {\it one\/} alteration $X \overset{\pi}\rightarrow \A^n$ for which 
$\tau(f^c) = 
tr_{\pi}(\pi_*\calo_X(\lceil K_{\pi} -  c\pi^*D\rceil)).$ 
In fact,  it has been shown that, fixing $f$, there is one alteration which computes all  test ideals $\tau(f^{\lambda})$, for any $\lambda$ \cite{SchTZ}. 
Of course, this is already known for multiplier ideals: it suffices to take one log resolution of $X$ to compute the multiplier ideal. Indeed, in characteristic zero, one need not take any finite covers at all. Interestingly, in characteristic $p$, it is precisely the finite covers that matter most.

It is worth remarking that many of the features of multiplier ideals---including the important local vanishing theorem---can be shown in characteristic $p$ ``up to finite cover" \cite[Theorem 5.5]{BST}. This can be viewed as a generalization  of Hochster and Huneke's original Theorem on the Cohen-Macaulayness of the absolute integral closure of a domain of characteristic $p$ \cite{HH3}.  See also \cite{SmSantaCruz}.

\end{document}